\newcommand{\Z}{{\mathbb{Z}}}
\newcommand{\Q}{{\mathbb{Q}}} 
\newcommand{\N}{{\mathbb{N}}}    
\newcommand{\C}{{\mathbb{C}}}
\newcommand{\p}{{\mathfrak{p}}}
\newcommand{\lra}{\longrightarrow}
\newcommand{\Gal}{\mathrm{Gal}}
\newcommand{\cyc}{\mathrm{cyc}}
\newcommand{\La}{\Lambda}
\theoremstyle{plain}
\newtheorem{theorem}{Theorem}[section]
\newtheorem*{theorem*}{Theorem}
\newtheorem{proposition}[theorem]{Proposition}
\newtheorem{rem}[theorem]{Remark}
\newtheorem{lemma}[theorem]{Lemma}
\newtheorem{corollary}[theorem]{Corollary}
\newtheorem{defn}[theorem]{Definition}
\newtheorem{example}[theorem]{Example}
\begin{document}
\title{$p^r$-Selmer companion modular forms.} 
\author{Somnath Jha, Dipramit Majumdar, Sudhanshu Shekhar} \address{Somnath Jha, Department of Mathematics and Statistics, IIT Kanpur,  208016, India}\email{jhasom@iitk.ac.in} \address{Dipramit Majumdar, Department of Mathematics, IIT Madras, Chennai-600036, India} \email{dipramit@iitm.ac.in}
 \address{Sudhanshu Shekhar, Department of Mathematics and Statistics, IIT Kanpur,  208016, India}\email{sudhansh@iitk.ac.in}\thanks{\noindent Keywords: $n$-Selmer group, congruent  modular forms}
\begin{abstract} {The study of $n$-Selmer group of elliptic curve over number field  in recent past has led to the discovery of some deep results in the arithmetic of elliptic curves. Given two elliptic curves $E_1$ and $E_2$ over a number field $K$, Mazur-Rubin\cite{mr} have defined them to be {\it $n$-Selmer companion} if for every quadratic twist $\chi$ of $K$, the $n$-Selmer groups of $E_1^\chi $ and $E_2^\chi$ over $K$ are  isomorphic. Given a prime $p$, they have given sufficient conditions for two elliptic curves to be $p^r$-Selmer companion in terms of mod-$p^r$ congruences between the curves. We discuss an analogue of this for Bloch-Kato $p^r$-Selmer group of modular forms.  
We compare the Bloch-Kato  Selmer groups of a modular form respectively  with the Greenberg  Selmer group when the modular form is $p$-ordinary and with the signed Selmer group of Lei-Loeffler-Zerbes when the modular form is non-ordinary at $p$. We also indicate the corresponding results over $\Q_\cyc$ and its relation with the well known congruence results of the special values of the corresponding $L$-functions due to Vatsal.}\end{abstract}
\maketitle

\section*{Introduction}
The study of $n$-Selmer group of elliptic curves over number field has been of considerable interest  in recent past. For example, for certain $n$, some striking results on the bounds on the average rank of $n$-Selmer group of elliptic curves over $\Q$ has been established  by Bhargava et. al. On the other hand, some deep results related to the rank distribution of $n$-Selmer group for certain $n$, of a family consisting of all quadratic twist  of an elliptic curve, has been studied by  Mazur-Rubin and others (cf. \cite{mr}). In \cite{mr}, instead of the rank distribution of the $n$-Selmer group over the family, 
 they formulate the inverse question: given a prime $p$ and a number field $K$, what all information about $E$ is encoded in the  $p$-Selmer group of an elliptic curve $E$ over  $K$? Motivated by this question, they define the following:

\begin{defn}\cite[Definition 1.2]{mr}
Let $K$ be a number field and $n \in \N$ be fixed. Two  elliptic curves $E_1$, $E_2$ are said to be $n$-Selmer companion, if for every quadratic character $\chi$ of $K$, there is an isomorphism of $n$-Selmer groups of $E_1^\chi$ and $E_2^\chi$ over $K$ i.e. $S_n(E_1^\chi/K) \cong S_n(E_2^\chi/K).$
\end{defn}

That naturally led them to study $p^r$-Selmer companion  elliptic curves for a  (fixed) prime $p$ with $r \in \N$. They gave sufficient conditions for two elliptic curves to be $p^r$-Selmer companion in terms of various conditions related to mod $p^r$-congruences between the curves (see main theorem \cite[Theorem 3.1]{mr}). They point out in \cite[\S 1]{mr} that it would be interesting to investigate this phenomenon more generally for $p^r$ Bloch-Kato Selmer group of motives instead of elliptic curves and that has led us to study this.

In this article, we fix a prime $p$ and study  Bloch-Kato  $p^r$-Selmer  groups associated to  modular forms  and discuss $p^r$ Selmer companion   modular forms (see Definition \ref{7489274794792834}).  
To avoid various technical difficulties which naturally arises in our case of Selmer group of modular forms, throughout the paper we make the restrictive hypothesis that the prime $p$ is odd.
Also, we state all our results for Selmer groups defined over $\Q$ (see \S \ref{sec2}). However, it can be seen from our proofs that our results can be  extended  to a general number field $K$ 
 at the cost of notation and hypothesis becoming more cumbersome but  essentially same in nature.

 A sample of our results is given in the following corollary which is a special case  of our main theorem (Theorem \ref{mainthmgr2}) for weight $k=2$ and  nebentypus $\epsilon =1$.   For  $f_1,f_2 \in S_k(\Gamma_0(N))$, let $K_{f_1,f_2}$ be the number field generated by the Fourier coefficients of $f_1, f_2$ and $\pi$ be a uniformizer of the ring of integers of the completion of  $K_{f_1,f_2}$ at a prime above $p$.

\begin{corollary}\label{cor}
We fix an odd prime $p$ and  $N \in \N$ with $(N,p)=1$. Let $i \in \{1,2\}$ and $f_i$ be a normalized cuspidal  Hecke eigenform  in $S_2(\Gamma_0(N))$.   Let $r \in \N$ and $\phi : A_{f_1}[\pi^r] \lra A_{f_2}[\pi^r]$ be a $G_\Q$ linear isomorphism. We assume that \begin{itemize}
\item $N$ is square-free and $\forall \ell \in \{\ell \text{ prime}: \ell \mid \mid N \}$, 
 $ \text{cond}_\ell(\bar{\rho}_{f_1})=\ell = \text{cond}_\ell(\bar{\rho}_{f_2})$.
\end{itemize}
Then for every quadratic character $\chi $ of $G_\Q$, we have an isomorphism 
of the 
 $\pi^r$-Bloch-Kato Selmer groups of $f_1\otimes\chi$ and $f_2\otimes\chi$ over $\Q$ i.e. 
$$S_\mathrm{BK}(A_{f_1\chi}[\pi^{r}]/\Q) \cong S_\mathrm{BK}(A_{f_2\chi}[\pi^{r}]/\Q).$$ 

\end{corollary}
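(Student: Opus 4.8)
The plan is to follow the local--global strategy of Mazur--Rubin. I realize each $\pi^r$-Bloch--Kato Selmer group $S_\mathrm{BK}(A_{f_i\chi}[\pi^{r}]/\Q)$ as the subgroup of classes in $H^1(G_{\Q,S}, A_{f_i\chi}[\pi^r])$ whose localization at each $v \in S$ lies in the propagated Bloch--Kato local condition $H^1_f(\Q_v, A_{f_i\chi}[\pi^r])$, where $S$ is a fixed finite set of places containing $p$, $\infty$ and the primes dividing $N$. The $G_\Q$-linear isomorphism $\phi$ induces, for each quadratic character $\chi$, an isomorphism $\phi\otimes\chi : A_{f_1\chi}[\pi^r] \xrightarrow{\sim} A_{f_2\chi}[\pi^r]$ of $G_\Q$-modules, hence compatible isomorphisms on global cohomology $H^1(G_{\Q,S}, A_{f_1\chi}[\pi^r]) \cong H^1(G_{\Q,S}, A_{f_2\chi}[\pi^r])$ and on each local cohomology $H^1(\Q_v, A_{f_1\chi}[\pi^r]) \cong H^1(\Q_v, A_{f_2\chi}[\pi^r])$. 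The statement thus reduces to a purely local assertion: at every $v \in S$, the map $\phi\otimes\chi$ carries the local condition of $f_1\chi$ onto that of $f_2\chi$.

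The easy places are disposed of first. For $v = \infty$, since $p$ is odd we have $H^1(\R, A_{f_i\chi}[\pi^r]) = 0$, so there is nothing to check. For $v \nmid Np\infty$ the representations are unramified and the Bloch--Kato condition is the unramified subgroup $H^1_{\mathrm{ur}}(\Q_v, -)$, which is functorial in the $G_{\Q_v}$-module and hence automatically preserved by $\phi\otimes\chi$. For $\ell \mid N$ I would use the squarefree hypothesis together with the conductor condition $\mathrm{cond}_\ell(\bar\rho_{f_1})=\ell=\mathrm{cond}_\ell(\bar\rho_{f_2})$: squarefree level forces $f_i$ to be special (Steinberg) at $\ell$, so the local representation has a prescribed shape, and conductor exactly $\ell$ pins the mod-$\pi$ inertia invariants to be one-dimensional. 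This lets me show that the finite/Bloch--Kato condition at $\ell$ is determined by the isomorphism class of the local Galois module $A_{f_i\chi}[\pi^r]$ alone, so that $\phi\otimes\chi$ matches the two conditions.

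The main obstacle is the place $v = p$, where $H^1_f(\Q_p, A_{f_i\chi}[\pi^r])$ is the crystalline condition and is not visibly a functor of the mod-$\pi^r$ module. Here I would split into the two cases flagged in the introduction. When $f_i$ is $p$-ordinary I replace the crystalline condition by the Greenberg condition, cut out by the sub/quotient filtration of the local representation, which descends to a functorial condition on $A_{f_i\chi}[\pi^r]$; one then checks that $\phi\otimes\chi$ respects the ordinary filtrations modulo $\pi^r$. When $f_i$ is non-ordinary I instead use the signed (Lei--Loeffler--Zerbes) local conditions, after verifying that on $A_{f_i\chi}[\pi^r]$ they agree with the Bloch--Kato condition and are preserved by $\phi\otimes\chi$. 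The delicate point in both cases is that these replacements are exact only at the level of $\pi^r$-torsion and require controlling the image of $H^1_f(\Q_p, V)$ in the cohomology of the torsion module; this is exactly the content supplied by the general comparison underlying Theorem \ref{mainthmgr2}, of which the present corollary is the specialization to weight $k=2$ and nebentypus $\epsilon=1$. Once the local conditions match at every $v \in S$, the induced map $\phi\otimes\chi$ restricts to the desired isomorphism $S_\mathrm{BK}(A_{f_1\chi}[\pi^{r}]/\Q) \cong S_\mathrm{BK}(A_{f_2\chi}[\pi^{r}]/\Q)$.
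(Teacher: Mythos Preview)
Your proposal is correct and follows the paper's approach: the corollary is simply Theorem~\ref{mainthmgr2} specialized to $k=2$, $\epsilon_i=1$, $t_i=0$ (so condition~(2) is vacuous and condition~(3) holds via [$p$-good] or [$p$-ss], the congruence $a_p(f_1)\equiv a_p(f_2)\pmod{\pi}$ forcing both $f_i$ into the same case), and your sketch of matching local conditions by comparing Bloch--Kato with Greenberg in the ordinary case and with the signed conditions in the non-ordinary case is exactly how the paper proves that theorem. One small slip: your set $S$ must also contain the primes dividing $\mathrm{cond}(\chi)$, and at such a prime $v\nmid N$ the twist $A_{f_i\chi}$ is genuinely ramified, so your ``representations are unramified'' claim does not literally apply there; the paper handles this in Case~2 of Proposition~\ref{conductor-equivalence-at-l}, and in fact one checks directly that $A_{f_i\chi}^{I_v}=0$ (since $p$ is odd and $\chi$ is quadratic), so the local conditions on both sides vanish.
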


\begin{rem}\label{bothsupandnonsupcases}
Note that both the cases; (i) when $f_i$ is $p$-ordinary and (ii) $f_i$ is `supersingular' i.e. non-ordinary at $p$ are covered in Corollary \ref{cor} (and also in Theorem \ref{mainthmgr2}).

Our main theorem (Theorem \ref{mainthmgr2})   includes the cases   $k >2$, nebentypus  is non-trivial. Moreover in the case when $f_i$ is $p$-ordinary, we also include the case when $f_i$ is `bad' at $p$ i.e.  the level  of $f_i =Np^{t_i}$ with $t_i \geq 0$ in our main theorem. 
\end{rem}

The basic strategy  of the proof is to compare each local factor which arise in the definition of Selmer group.  Specially, comparing the local factors of Selmer group at the prime $p$  requires most careful analysis. Note that in our case of studying Bloch-Kato Selmer group of modular forms, we do not have the advantage of considering the Kummer map on abelian variety. Also, we do not have an obvious analogue of using fppf cohomology on N\'eron model  of elliptic curve, used by Mazur-Rubin for treating the case of  local factors of Selmer group at the prime $p$. Thus we adopt a different strategy. Let $f_1$ and $f_2$ be two weight $k$ normalized cuspforms which are congruent mod $\pi^r$.  If $f_1$ and $f_2$ are $p$-ordinary, then we compare $\pi^r$-Bloch-Kato  Selmer local condition at $p$ with the $\pi^r$-Greenberg Selmer local condition at $p$. On the other hand, if $f_1$ and $f_2$ are non-ordinary at $p$, then we compare $\pi^r$-Bloch-Kato  Selmer local condition at $p$ with the $\pi^r$-signed Selmer local condition at $p$ (\cite{lh}). The theory of signed Selmer group for non-ordinary modular forms are developed using works of Lei-Loeffler-Zerbes (\cite{llz}, \cite{llzant}) and  can be viewed as generalization of $\pm$ Selmer groups of supersingular elliptic curves developed by S. Kobayashi.

Indeed, our method for local factors at primes  $\ell$ with $\ell \neq p$ is also different; we impose some condition on the conductor  of the residual mod $\ell$-Galois representation  and use some Iwasawa theoretic techniques. We have shown  in \S \ref{secexam}, this condition can be  verified in many cases; for example using {\it level lowering} results Ribet, Serre {\it et. al}.

In Definition \ref{74892747947928340} we take the liberty to extend the definition of Selmer companion for two cuspforms of different weights. Using this definition in  Corollary \ref{mainthmgr3}, we give  sufficient conditions for two  forms of two different weights (different $p$-power level and different nebentypus) to be $\pi^r$-Selmer companion.  We gives such example of extended $\pi^r$-Selmer companion forms within an ordinary Hida family in Example \ref{sjwjakkoi09k3kllm}(4).

We also discuss $\pi^r$ Selmer companion forms over $\Q_\cyc$, when the  congruent modular forms are good, ordinary at $p$. The proof over $\Q$ easily adapts  to this case. However,  there is a well known congruence result of Vatsal \cite{va} which shows if $f_1 \equiv f_2$  (mod $\pi^r$) are of weight $k \geq 2$, level $N$ and are good, ordinary at $p$, then for any Dirichlet character $\chi$ of conductor prime to  $N$, the $p$-adic $L$-functions of $f_1\otimes \chi$ and $f_2\otimes \chi$ are congruent mod $\pi^r$ in the Iwasawa algebra. Thus via the Iwasawa Main Conjecture 
our result is a  precise algebraic counterpart of Vatsal's result. However, in case of quadratic characters, our congruence result on Selmer groups works for all possible quadratic characters.

The structure of the article is as follows. After fixing the notation and basic set-up in \S \ref{sec0}, we recall the Galois representation attached to a  newform  in \S \ref{sec1} and \S \ref{sec2} contains the definitions of Greenberg,   Bloch-Kato, signed 1 and signed 2 $\pi^r$-Selmer group of a twisted nomalized cuspidal eigenform.  Congruence results on Greenberg  Selmer groups are discussed in \S \ref{secg}. In  \S \ref{secbk}, we study $\pi^r$ Bloch-Kato  Selmer companion forms and establish our main result (Theorem \ref{mainthmgr2}).  We discuss  $\pi^r$  Selmer companion forms over  $\Q_\cyc$ in  \S \ref{seccyc}, and point out its relation with the well known congruences of the corresponding $p$-adic $L$-functions. We compute several numerical examples  verifying our results (at weight 2 as well as at higher weights) in \S \ref{secexam}. In particular, in \S \ref{secexam} we have given explicit numerical example of $\pi^r$-Bloch-Kato Selmer companion modular forms which are (i) $p$-ordinary and (ii) those which are non-ordinary at $p$  as well.
\section{Preliminaries}\label{section1}
\subsection{Notation and set up:}\label{sec0} Throughout we fix an embedding $\iota_\infty$ of a fixed algebraic closure $\bar{\Q}$ of $\Q$ into $\C$ and also an embedding $\iota_\ell$ of $\bar{\Q}$ into a fixed algebraic closure $\bar{\Q}_\ell$ of the field $\Q_\ell$ of the $\ell$-adic numbers, for every prime $\ell$. Fix an odd prime $p$ and a positive  integer $N$ with $(N,p)=1$. 
 Let $i \in \{1,2\}$ and $f_i \in S_{k}(\Gamma_0(Np^{t_i}), \epsilon_i)$  where $t_i \in \N \cup \{0\}$, be a  normalized cuspidal Hecke eigenform which is a newform of conductor $Np^{t_i}$  and nebentypus $\epsilon_i$.  We assume that $\epsilon_i$ is a primitive Dirichlet character of conductor $C_i$. Then $C_i \mid Np^{t_i}$. We can write $\epsilon_i = \prod_\ell \epsilon_{i, \ell}$ where $\epsilon_{i,\ell}$ is a primitive Dirichlet character of conductor $=\ell^{n(i,\ell)}$ with $n(i,\ell) \in \N$, for every prime divisor $\ell$ of $C_i$. In particular, we can write $\epsilon_i = \epsilon_{i,p} \epsilon'_i$, where $\epsilon'_i$ is a Dirichlet character of conductor, say $C_i'$ with  $(C_i',p)=1$ and $C_i'\mid N$.  The {\it order} of a Dirichlet character $\tau:=$ the order of the subgroup of the roots of unity in $\C^\ast$ generated by the image of $\tau$. We define a condition $(\mathrm{C'_{i,\ell}})$ to be used later:
 \begin{equation}\label{conduction-condition126}
 (\mathrm{C'_{i,\ell}}): \text{ For every prime } \ell \mid C'_i, \text{ the order of } \epsilon_{i,\ell}^2 \neq p^n \text{ for any } n \in \N.
 \end{equation}
\begin{rem}\label{jsfheakfuwalkui}
For example, if the order of $\epsilon'_i$ is co-prime to $p$, then  $(\mathrm{C'_{i,\ell}})$ is satisfied. 
\end{rem}  
 
 Let $K= K_{f_1,f_2,\epsilon_1, \epsilon_2}$ be the number field generated by the Fourier coefficients of $f_1, f_2$ and the values of $\epsilon_1, \epsilon_2$. Let $\pi_K$ be a uniformizer of the ring of integers $O_K$ of completion of  $K$ at a prime  $\mathfrak p$ lying above $p$ induced by the embedding $\iota_p$. To ease the notation, we often write $O=O_K$ and $\pi=\pi_K$. Put 
 $$S = S_{f_1,f_2}:=\{ \ell \text{ prime}: \ell  \mid \mid N\}.$$
 Note that, by definition $p\not \in S$. For any  separable field $K$, $G_K$ will denote the Galois group $\text{Gal}(\bar{K}/K)$. For any $O_K$  module $M$, $M[\pi^{r}]$ will denote the set of $\pi^{r}$ torsion points of $M$. For a group $G$ acting on a module $M$, we denote by $M^G = \{ m \in M | gm =m ~ \forall g \in G\}.$ Also for a number field or a $p$-adic field  field $F$, and a discrete $\text{Gal}(\bar{F}/F)$ module $M$, $H^i(F,M)$ will denote the Galois cohomology group $H^i(\text{Gal}(\bar{F}/F), M)$.

\subsection{Galois representation of  a modular form}\label{sec1}
Let $p$ be a fixed odd prime and $A \in \N$ with $(A,p) =1$. Let $h =\sum a_n(h)q^n \in S_k(\Gamma_0(Ap^t), \psi)$ be a normalized eigenform of weight  $k \geq 2$ and  nebentypus $\psi$.    Then $h$ is    $p$-ordinary if $\iota_p (a_p(h))$ is a $p$-adic unit.
 Let $K_h$ be the number field generated by the Fourier coefficients of $h$  and the values of $\psi$.  Let $L$ be a number field containing $K_h$ and $L_{\mathfrak p}$ denote the completion of this number field at a prime  $\mathfrak p$ lying above $p$ induced by the embedding $\iota_p$. Let $O_L$ denote the ring of integers of  $L_{\mathfrak p}$ and $\pi_L$ be a uniformizer of $O_L$. We denote by $\omega_p: G_\Q \lra \Z_p^\times$ the $p$-adic cyclotomic character.
\begin{theorem}\label{rhof}[Eichler,  Shimura, Deligne, Mazur-Wiles, Wiles etc.]
Let $h=\sum a_n(h)q^n\in S_k(\Gamma_0(Ap^t),\psi)$  be a  newform of weight $k\geq 2$ where $(A,p) =1$.  Then there exists a  Galois representation,
 \begin{equation}\label{rhof1}
\rho_h: G_\Q \lra GL_2(L_{\mathfrak p}) \quad \text{ such that}
\end {equation}

\begin{enumerate}
\item \label{unr1}
at all primes $\ell \nmid Ap$, $\rho_h$ is unramified  with the characteristic polynomial of the (arithmetic) Frobenius is given by 
$$\mbox{trace}(\rho_h(Frob_\ell))=a_\ell(h),\quad\mbox{det}(\rho_h(Frob_\ell)) =\psi(\ell)\omega_p(Frob_\ell)^{k-1} 
= \psi(\ell)\ell^{k-1}.$$
 It follows (by the Chebotarev Density Theorem) that $\text{det}(\rho_h)=\psi \omega_p^{k-1}$.
\item  Let    $G_p$ denote the decomposition subgroup of $G_\Q$ at $p$. In addition, let us assume $h$ is $p$-ordinary and denote by $\alpha_p(h)$ (respectively $\beta_p(h)$) the  $p$-adic unit (respectively non $p$-adic unit) root of the polynomial $X^2 -a_p(h)X +\psi(p) p^{k-1}.$  Let   $\lambda_h$ be the unramified character with $\lambda_h(Frob_{p})=\alpha_p(h)$. Then by Mazur-Wiles, Wiles

$$ \rho_{h}|_{G_{p}} \sim \begin{pmatrix} \lambda_h^{-1} \psi \omega_{p}^{k-1}  & * \\   0 & \lambda_h \end{pmatrix},$$

\end{enumerate} 
\end{theorem}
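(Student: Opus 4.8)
The plan is to construct $\rho_h$ geometrically from the \'etale cohomology of modular (Kuga--Sato) varieties, to pin down the characteristic polynomial of $\Fr_\ell$ at the good primes via the Eichler--Shimura(--Deligne) congruence relation and the Chebotarev density theorem, and finally to read off the local shape at $p$ from the ordinarity hypothesis. I would begin with the weight $k=2$ case, the original Eichler--Shimura construction. Let $J$ be the Jacobian of the modular curve of level $\Gamma_0(Ap^t)$ with the nebentypus level structure; the Hecke algebra $\mathbb{T}$ acts on $J$ and $h$ determines a homomorphism $\theta_h\colon \mathbb{T}\to O_L$, $T_\ell\mapsto a_\ell(h)$. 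Taking the Tate module $V=T_p(J)\otimes_{\Z_p}L_{\mathfrak p}$, which carries commuting actions of $G_\Q$ and $\mathbb{T}$, and cutting out the $\theta_h$-isotypic piece yields a two-dimensional $L_{\mathfrak p}$-space with continuous $G_\Q$-action. Good reduction of the modular curve away from $Ap$ gives that $\rho_h$ is unramified there, and the Eichler--Shimura congruence relation identifies the characteristic polynomial of $\Fr_\ell$ on $V$ with the Hecke polynomial $X^2-a_\ell(h)X+\psi(\ell)\ell^{k-1}$, which is exactly (\ref{unr1}).

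For general $k\geq 2$ I would replace $J$ by Deligne's construction: form the universal elliptic curve $\pi\colon \mathcal E\to X$ over the (open) modular curve, take the $(k-2)$-fold fibre product (the Kuga--Sato variety), and realise $\rho_h$ inside $H^1_{\mathrm{et}}(\bar X,\mathrm{Sym}^{k-2}R^1\pi_*\Q_p)$ after applying the Hecke projector onto the $\theta_h$-component. Smoothness away from $Ap$ again yields unramifiedness, and the Eichler--Shimura--Deligne congruence relation together with Chebotarev determines the trace $a_\ell(h)$ and determinant $\psi(\ell)\ell^{k-1}$ of $\Fr_\ell$ at all $\ell\nmid Ap$; since these are the values of $\psi\omega_p^{k-1}$ on $\Fr_\ell$, Chebotarev forces $\det\rho_h=\psi\omega_p^{k-1}$.

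For part (2) I would invoke the Mazur--Wiles/Wiles analysis under the ordinarity hypothesis. The point is that the unit root $\alpha_p(h)$ of $X^2-a_p(h)X+\psi(p)p^{k-1}$ controls an unramified quotient of $\rho_h|_{G_p}$: using Hida's ordinary idempotent, $h$ sits in an ordinary $p$-adic family, and the ordinary part of the relevant cohomology carries a canonical $G_p$-stable filtration whose quotient is unramified with $\Fr_p$ acting by $\alpha_p(h)$, i.e.\ by the character $\lambda_h$. Comparing with $\det\rho_h=\psi\omega_p^{k-1}$ then forces the stable sub-line to transform by $\lambda_h^{-1}\psi\omega_p^{k-1}$, which gives the stated upper-triangular shape.

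The hard part is genuinely twofold. The existence and the precise characteristic polynomial in higher weight rest on Deligne's comparison theorems and on the Weil conjectures (the Ramanujan--Petersson bound), which are in no way formal. More delicate still is part (2): the ordinary filtration of $\rho_h|_{G_p}$ is invisible from the global construction and requires either Hida's theory of ordinary families or a careful crystalline/$p$-adic Hodge analysis (the Dwork unit root) of the reduction of the Kuga--Sato variety at $p$. This local analysis at $p$ is the real obstacle, and it is exactly the structure on which the paper's later comparison of Selmer conditions at $p$ depends.
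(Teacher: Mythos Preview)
Your sketch is a reasonable outline of the classical arguments, but there is nothing in the paper to compare it against: the paper does not prove Theorem~\ref{rhof}. It is stated as a background result, with the attribution ``Eichler, Shimura, Deligne, Mazur-Wiles, Wiles etc.'' serving as the reference, and the exposition moves directly on to the definition of the residual representation and of the various Selmer groups. This is standard practice for a paper whose contribution lies elsewhere; the existence and local shape of $\rho_h$ are simply imported from the literature.

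That said, your summary of the underlying constructions is accurate in spirit: the weight-two case via the Jacobian and the Eichler--Shimura relation, the higher-weight case via Deligne's realisation in the cohomology of Kuga--Sato varieties, and the ordinary filtration at $p$ via the work of Mazur--Wiles and Wiles (with Hida theory in the background). You also correctly identify where the genuine difficulty lies, namely the local analysis at $p$ giving the upper-triangular form, which is precisely the structure the paper exploits in Lemma~\ref{phi''copy}, Proposition~\ref{ipandip'}, and the comparison arguments of \S\ref{secbk}. If anything, one could add that the reference \cite{hi} (Hida's book) and \cite{Wi} (Wiles) are the specific sources the paper leans on for these facts, and that for the bad-at-$p$ case ($t>0$) the statement of part~(2) requires a bit more care than the good case you sketched.
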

Let $V_h \cong L_{\mathfrak p}^{\oplus 2}$ denotes the representation space of $\rho_h$. By compactness of $G_\Q$, choose an  $O_L$ lattice $T_{h}=T_h^L$ of $V_h$ which is invariant under $\rho_{h}$. 
Let 
\begin{equation}\label{rrho}
\bar{\rho}_h: {\mathrm G}_{\Q} \lra \mathrm{GL_2}(\frac{O_L}{\pi_L})
\end{equation}
 be the residual representation of $\rho_h$.

\subsection{Definition of the  Selmer groups}\label{sec2}
We choose and fix a quadratic character $\chi$ and set $M := \text{cond}(\chi).$ Recall for $i=1,2$, $f_i \in S_{k}(\Gamma_0(Np^{t_i}), \epsilon_i)$ are  two (fixed) normalized cuspidal eigneforms with $(N,p)=1$ and $t_i\in \N \cup \{0\}$.  Also recall $S = S_{f_1,f_2}:=\{\ell \text{ prime}: \ell \mid \mid N\}.$ Let $\Sigma$ be a finite set of primes of $\Q$  such that $\Sigma \supset S \cup \{p\} \cup M$. Let $\Q_\Sigma$ be the maximum algebraic extension of $\Q$ unramified outside $\Sigma$ and set $G_\Sigma(\Q) = \text{Gal}(\Q_{\Sigma}/\Q)$.  In this subsection, we use $h$ as a notation for $f_1$ or $f_2$ i.e. $h \in \{f_1, f_2\}$.  Hence we can  take $L =K_{f_1,f_2,\epsilon_1, \epsilon_2}$, $O_L =O $,  $\pi_L= \pi$.   Upon choosing $O$  lattice $T_h \subset V_h \cong K_{\mathfrak p}^{\oplus 2}$, we have an induced $G_\Q$ action on the discrete module $A_h:= V_h/T_h.$ We also have the canonical maps
 \begin{equation}\label{pif}
  0 \lra T_h \lra V_h \stackrel{p_h}{\lra} A_h \lra 0.
  \end{equation}
   For $j \in \Z$, set $V_{h\chi(-j)} =  V_h \otimes \chi  \omega_p^{-j} = V_h \otimes\Q_p( \chi  \omega_p^{-j}) $ with the diagonal action of $G_\Q$. We further define $T_{h\chi(-j)} = T_h \otimes \chi \omega_p^{-j}$ and put $A_{h\chi(-j)} = \frac{V_{h\chi(-j)}}{T_{h\chi(-j)}} $.

\par Let   $K \subset \Q_\Sigma$ be a number field. We define $\Sigma_K$ to be the set primes in $K$ lying over the primes in $\Sigma$. In particular for $K=\Q$, $\Sigma_\Q=\Sigma$. For every prime $v \in \Sigma_K $,  
let us choose a subset $H^1_\dagger (K_v,A_{h\chi(-j)}) \subset H^1 (K_v,A_{h\chi(-j)}).$ For this choice,  we  define $\dagger$-Selmer group $S_\dagger(A_{h\chi(-j)}/K)$ as 

\begin{equation}\label{sel-general}
S_\dagger(A_{h\chi(-j)}/K): =\mathrm{Ker}\Big(H^1(\Q_\Sigma/K, A_{h\chi(-j)}) \lra \underset{v \in \Sigma_K}\prod \frac{H^1(K_v, A_{h\chi(-j)})}{H_\dagger^1(K_v, A_{h\chi(-j)})}\Big) 
\end{equation}

For any $r \in \N$, there is a canonical map  
   \begin{equation}\label{ifr}
   0 \lra A_{h\chi(-j)}[\pi^r] \stackrel{i_{{h\chi(-j)},r}}{\lra} A_{h\chi(-j)}
   \end{equation}
    
Next we define $\pi^r$     $\dagger$-Selmer group $S_\dagger(A_{h\chi(-j)}[\pi^r]/K)$ as 
\begin{equation}\label{sel-mod-r-general}
S_\dagger(A_{h\chi(-j)}[\pi^r]/K): =\mathrm{Ker}\Big(H^1(\Q_\Sigma/K, A_{h\chi(-j)}[\pi^r]) \lra \underset{v  \in \Sigma_K}\prod \frac{H^1(K_v, A_{h\chi(-j)}[\pi^r])}{H_\dagger^1(K_v, A_{h\chi(-j)}[\pi^r])}\Big),
\end{equation}
where $H_\dagger^1(K_v, A_{h\chi(-j)}[\pi^r]):=  { i_{{h\chi(-j)},r}^{*^{-1}}} (H^1_\dagger (K_v,A_{h\chi(-j)}))$ for every $v  \in \Sigma_K$. Here $$i_{{h\chi(-j)},r}^{*}: H^1(K_v,  A_{h\chi(-j)}[\pi^r]) \lra H^1(K_v, A_{h\chi(-j)}) $$ is  induced from $i_{{h\chi(-j)},r}$ in \eqref{ifr}. When there is no confusion, we may denote $i_r^*:= i_{{h\chi(-j)},r}^{*} $. 

\medskip

\begin{rem}\label{rem1first}
For $K=\Q$, note that if $A_{h}[\pi]$ is irreducible  as a $G_\Q$ module then the natural Kummer map $H^1(G_\Sigma(\Q), A_{h\chi(-j)}[\pi^r])  \lra H^1(G_\Sigma(\Q), A_{h\chi(-j)})[\pi^r]$ is an isomorphism.  Also it follows from the definition of the Selmer group above that if $A_{h\chi(-j)}[\pi]$ is irreducible, then the natural map $S_\dagger(A_{h\chi(-j)}[\pi^r]/\Q) \lra S_\dagger(A_{h\chi(-j)}/\Q)[\pi^r]$ is an isomorphism as well.
\end{rem}

For a prime $v \in \Sigma_K$, let $I_v$ be the inertia subgroup of $\mathrm{Gal}(\bar{\Q}/K)$ at $v$ and  $p_{h\chi(-j)}^*: H^1(K_v, V_{h\chi(-j)}) \lra H^1(K_v, A_{h\chi(-j)})$ is  induced  from the map  $p_{h\chi(-j)}$ in \eqref{pif}. Now we will make special choice of $H^1_\dagger (K_v,A_{h\chi(-j)}) $ for every $v \in \Sigma_K$, to respectively define  Bloch-Kato Selmer group  and under suitable condition we will also define Greenberg,  signed 1 and  signed  2 Selmer groups.

 First $\dagger =\mathrm{BK} $  is  defined as follows:  For a $v \in \Sigma$, set $$
H^1_\mathrm{BK}(K_v, A_{h\chi(-j)}) : =
\begin{cases} 
 p_{h\chi(-j)}^*(H^1_{\text{unr}}(K_v, V_h))& \text{if } v \in \Sigma_K, v \nmid p \\
 p_{h\chi(-j)}^*(H^1_{f}(K_v, V_h))& \text{if } v \mid p.
\end{cases}
$$
\begin{equation}\label{unr}
 \text{where, } \quad H^1_{\text{unr}}(K_v, V_{h\chi(-j)}) =  \mathrm{Ker}\Big(H^1(K_v, V_{h\chi(-j)}) \lra H^1(I_v, V_{h\chi(-j)})\Big), ~ v \in \Sigma_K, v \nmid p 
 \end{equation}
\begin{equation}\label{finite}
 \text{and } \quad  H^1_{\text{f}}(K_v, V_{h\chi(-j)}) =  \mathrm{Ker}\Big(H^1(K_v, V_{h\chi(-j)}) \lra H^1(K_v, V_{h\chi(-j)}\otimes_{\Q_p}B_\mathrm{crys})\Big), ~ v \mid p
 \end{equation} where $B_\mathrm{crys}$ is as defined by Fontaine in \cite{fon}. This completes the definition of  Bloch-Kato Selmer groups. 
 
 For $v \mid p$, we also recall the definition of a subgroup $H^1_g(K_v,V_{h\chi(-j)})$ of $H^1(K_v,V_{h\chi(-j)})$, to be used later  
\[  H^1_g(K_v,V_{h\chi(-j)}): = \mathrm{Ker}\Big(H^1(K_v,V_{h\chi(-j)})\lra H^1(K_v,V_{h\chi(-j)}\otimes_{\Q_p} B_{dR})\Big), ~ v\mid p \]
 where $B_{dR}$  is as defined by Fontaine in \cite{fon}.

 Next we take $\dagger =\mathrm{Gr}$ and define Greenberg Selmer group.  To define this, is necessary to assume that $h$ is ordinary at $p$. Then by the $p$-ordinary property of $\rho_h$, by Theorem \ref{rhof}(2), $A_h$ has a filtration as a $G_{\Q_p}$ module
\begin{equation}\label{iaff}
 0 \lra A_h' \lra A_h \lra A_h^{''} \lra 0,
 \end{equation}
 where  both ${A_h'}^\vee$ and  ${A_h^{''}}^\vee$ are free  $O$ module of rank 1 and the action of $G_\Q$ on $A_h^{''}$ is unramified at $p$. We can also get a similar filtration on $A_{h\chi(-j)} $. We now  define 
 
 $$
H^1_\mathrm{Gr}(K_v, A_{h\chi(-j)}) : =
\begin{cases} 
\text{Ker}\big(H^1(K_v, A_{h\chi(-j)}) \lra H^1(I_v, A_{h\chi(-j)})\big) & \text{if } v \in \Sigma_K,  v \nmid p \\
\text{Ker}\big(H^1(K_v, A_{h\chi(-j)}) \lra H^1(I_v, A^{''}_{h\chi(-j)})\big) & \text{if } v \mid p.
\end{cases}
$$

Now  we take $\dagger =i$ and define signed $i$ Selmer group for $i \in \{1,2\}$ (cf. \cite{llzant}, \cite{lh}). To define this, it is necessary to assume that $h$ is `good' at $p$ and also non-ordinary at $p$ i.e. $t_1=t_2=0$, $ (N, p)$ =1 and $v_p(a_p(h)) \neq 0$.  We also assume that the quadratic character $\chi $ is trivial i.e. we only define signed Selmer group  for $f\otimes\omega_p^{-j}$. Further, we assume $K \subset \Q(\mu_{p^\infty}) :=\underset{n}{\cup}\Q(\mu_{p^n})$.  For a prime $v \in \Sigma, v \nmid p$, we simply define $$
H^1_\mathrm{i}(K_v, A_{h(-j)}) : =  H^1_\mathrm{BK}(K_v, A_{h(-j)}), \quad i=1,2.$$
For $v \mid p$, we will now define  $H^1_i(K_v, A_{h(-j)}) $ for $i=1,2$ respectively.

 As $v_p(a_p(h)) \neq 0$, a pair of Coleman maps $$
\mathrm{Col}_{h,i}:H^1_\mathrm{Iw}(\Q_p,T_h) \cong \underset{n}{\varprojlim}H^1(\Q_p(\mu_{p^n}), T_h) \lra O[\Delta][[\Gamma]]\cong O[\Delta][[T]]
$$ are defined (see \cite[\S2]{lh} \cite{llzant} for details). Here $\Delta=\mathrm{Gal}(\Q(\mu_p)/\Q) $ and $O=O_h$.
 Let Pr$_{K_v}$ be the natural projection map 
 \begin{equation}\label{djhqhd}
 H^1_\mathrm{Iw}(\Q_p, T_{h(-j)}) \stackrel{{Pr}_{K_v}}{\lra} H^1(K_v, T_{h(-j)}).
 \end{equation} For $i=1,2$, we first define a subset $H^1_i(K_v, T_{h(-j)}) \subset H^1(K_v, T_{h(-j)})$ to be  $$H^1_i(K_v, T_{h(-j)}) :=\text{Pr}_{K_v}(\text{Ker}(\text{Col}_{h,i})\otimes \omega_p^{-j}),\quad ~ i=1,2$$  

Now consider the natural map $\iota:H^1(K_v, T_{h(-j)}) \lra H^1(K_v, V_{h(-j)})$ induced by the inclusion of $T_h$ in $V_h$ and denote by $V_i$ the subspace of $H^1(K_v, V_{h(-j)})$  generated by the image of $H_i^1(K_v, T_{h(-j)}) $ under $\iota$ i.e. $V_i=<\iota(H_i^1(K_v, T_{h(-j)}))>$. Finally, define $H^1_i(K_v, A_{h(-j)}) 
: =\mathrm{Proj}(V_i)$ where Proj is the natural map $\mathrm{Proj}:H^1(K_v, V_{h(-j)}) \lra H^1(K_v, A_{h(-j)})$ induced by the projection of $V_h$ to $A_h$. This completes the definition of signed 1 and 2 Selmer groups of $h\otimes \omega_p^{-j}$.

Finally, we define $\pi^r$  Selmer companion  modular forms.

\begin{defn}\label{7489274794792834}
Let $i \in \{1,2\}$ and  $f_i\in S_k(\Gamma_0(N_i), \epsilon_i)$ be a normalized cuspidal eigenform where $N_i \in \N$. Let $r \in \N$. 
We say $f_1$ and $f_2$ are $\pi^r$  (Bloch-Kato) Selmer companion, if for each critical twist $j$ with $0\leq j \leq k-2$ and for every quadratic character $\chi$ of $G_\Q$, we have an isomorphism of $\pi^r$ Bloch-Kato Selmer groups of $f_1\otimes \chi \omega_p^{-j}$ and $f_2\otimes \chi \omega_p^{-j}$ over $\Q$ i.e.
$$S_\mathrm{BK}(A_{f_1\chi(-j)}[\pi^{r}]/\Q) \cong S_\mathrm{BK}( A_{f_2\chi(-j)}[\pi^{r}]/\Q). $$
\end{defn}

  Note that for weight $k \geq 2$, corresponding to $k-1$ critical values, there are  $k-1$ many Selmer groups associated to a cuspidal eigenform; and for $f_1$ and $f_2$ to be  $\pi^r$-Selmer companion, we need each $j $ with $0 \leq j \leq k-2$ and every $\chi$, $S_\mathrm{BK}(A_{f_1\chi(-j)}[\pi^{r}]/\Q) \cong S_\mathrm{BK}(A_{f_2\chi(-j)}[\pi^{r}]/\Q)$. 
  
\begin{rem}\label{finintefirst88237}
\rm{
Mazur  and Rubin \cite[\S 1]{mr} have stated that given an elliptic curve $E$ over a number field $K$, they expect  there are only finitely many elliptic curves $E'$ over $K$ such that $E$ and $E'$ are Selmer companion.  Moreover they  have shown in \cite[Proposition 7.1]{mr}  that given an elliptic curve $E$ over $K$ there are only finitely many elliptic curves $E'$ over $K$ such that the pair $(E,E')$ satisfy all the conditions of their main theorem (Theorem 3.1, \cite{mr}).

Our situation is different. Let  $f\in S_2(\Gamma_0(N))$ be a $p$-ordinary  normalized Hecke eigen newform with $a_p(f) \neq \pm 1$ (mod $\pi$), $N$ is squarefree and coprime to $p$. We further assume that cond$_\ell(\bar{\rho}_f)=\ell$ for every prime $\ell||N$. (In Example \ref{sjwjakkoi09k3kllm}(1), we have given explicit example of such an $f$.)

Then by Hida theory (see \cite{Wi}) there exists  infinitely many $f_r\in S_2(\Gamma_0(Np^r), \psi_r)$ such that $f_r \equiv f$ (mod $\pi$), where $\pi  =\pi_{f,f_r, \psi_r}$ and $\psi_r$ is certain  dirichlet character of  conductor $p^{r}$ satisfying $\psi_r$ is trivial (mod $\pi)$. 
 Then for each $r$, 
 the pair $(f,f_r)$ satisfies conditions $(1),(2)$ and $(3)$ of Theorem \ref{mainthmgr2}. In particular, $f$ and $f_r$ are $\pi$ Selmer companion for infinitely many $f_r$. 
}    
\end{rem}  
\begin{rem}\rm{
In the converse direction, Mazur and Rubin have asked if two elliptic curves $E$ and $E'$ over $K$ are $p^r$-Selmer companion, then can we say that $E[p^r]\cong E'[p^r]$ as $\Gal(\bar{K}/K)$-modules (see \cite[Conjecture 7.14]{mr},  in arxiv version.)? It would be interesting to investigate if $\pi^r$-Selmer companion modular forms of weight $k\geq 2$ are congruent mod $\pi^r$. }
\end{rem}

\section{ `Greenberg Selmer companion forms'}\label{secg}
In this section, we compare twisted $\pi^r$-Greenberg Selmer group of two $p$-ordinary $\pi^r$ congruent cuspforms. We use these results in the next section to study   Bloch-Kato Selmer companion forms. Our main result  in this section is the following: 
\begin{theorem}\label{mainthmgr}
Let $p$ be an odd prime and for $i=1,2$, let $f_i$ be a $p$-ordinary normalized  eigenform  in $S_k(\Gamma_0(Np^{t_i}), \epsilon_i)$, where $(N,p)=1$, $k \geq 2$,  $t_i\in \N\cup \{0\}$. Recall from \S \ref{sec0}, $C_i'$ is the tame conductor of $\epsilon_i$. Let  $r \in \N$ and $\phi : A_{f_1}[\pi^r] \lra A_{f_2}[\pi^r]$  be a $G_\Q$ linear isomorphism. We assume  the following:
\begin{enumerate}

\item $N$ is square-free and $\forall \ell \in S$,  $ \text{cond}_\ell(\bar{\rho}_{f_1})=\ell = \text{cond}_\ell(\bar{\rho}_{f_2})$.
\item The condition $(\mathrm{C'_{i,\ell}})$, defined in equation \eqref{conduction-condition126}, is satisfied for $i=1,2$.
\item Assume  $ \omega_p^{k-1}\epsilon_{i,p}  \neq 1$  (mod $\pi$) for $i=1,2$. 
\end{enumerate}
Then for each fixed $j$ with $0 \leq j \leq k-2$, and for every quadratic character $\chi $ of $G_\Q$ there is an isomorphism 
$$S_\mathrm{Gr}( A_{f_1\chi(-j)}[\pi^{r}]/\Q) \cong S_\mathrm{Gr}(A_{f_2\chi(-j)}[\pi^{r}]/\Q). $$
\end{theorem}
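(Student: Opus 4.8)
The plan is to reduce the global statement to a family of purely local comparisons, one for each $v\in\Sigma$, and then to transport them via the given isomorphism $\phi$. First I would extend $\phi$ to the twists: since $\phi\colon A_{f_1}[\pi^r]\to A_{f_2}[\pi^r]$ is $G_\Q$-linear and $A_{h\chi(-j)}[\pi^r]=A_h[\pi^r]\otimes\chi\omega_p^{-j}$, tensoring with the identity on the rank-one module $\chi\omega_p^{-j}$ produces a $G_\Q$-isomorphism $\phi_{\chi,j}\colon A_{f_1\chi(-j)}[\pi^r]\xrightarrow{\sim}A_{f_2\chi(-j)}[\pi^r]$. This at once induces an isomorphism on $H^1(\Q_\Sigma/\Q,-)$ and, for every $v\in\Sigma$, on $H^1(\Q_v,-)$, compatibly with the localization maps appearing in \eqref{sel-mod-r-general}. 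Hence, once I know that $\phi_{\chi,j}$ carries $H^1_{\mathrm{Gr}}(\Q_v,A_{f_1\chi(-j)}[\pi^r])$ onto $H^1_{\mathrm{Gr}}(\Q_v,A_{f_2\chi(-j)}[\pi^r])$ for each $v$, the two Selmer groups, being kernels of the same localization diagram, must be isomorphic.

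The essential difficulty is that the $\pi^r$-Greenberg condition is defined as $(i_r^*)^{-1}$ of a condition living on the full divisible module $A_{h\chi(-j)}$, whereas $\phi_{\chi,j}$ only acts on the $\pi^r$-torsion. I would resolve this by giving each local condition an \emph{intrinsic} description in terms of the $G_{\Q_v}$-module $A_{h\chi(-j)}[\pi^r]$ alone. At $p$, hypothesis (3) forces the two diagonal characters of $\bar\rho_h|_{G_p}$ in Theorem \ref{rhof}(2) to differ on $I_p$ modulo $\pi$ (note $\omega_p^{k-1}\epsilon_{i,p}$ is ramified only at $p$, so nontriviality mod $\pi$ is equivalent to nontriviality on $I_p$). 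This makes the ordinary filtration canonical: $A_h'[\pi^r]=\mathrm{Ker}\big(A_h[\pi^r]\to(A_h[\pi^r])_{I_p}\big)$ is the kernel of the maximal unramified quotient, so $\phi$, and hence $\phi_{\chi,j}$, respects the filtration $0\to A_h'[\pi^r]\to A_h[\pi^r]\to A_h''[\pi^r]\to 0$. I then prove the cohomological identity $(i_r^*)^{-1}\big(H^1_{\mathrm{Gr}}(\Q_p,A_{h\chi(-j)})\big)=\mathrm{Ker}\big(H^1(\Q_p,A_{h\chi(-j)}[\pi^r])\to H^1(I_p,A''_{h\chi(-j)}[\pi^r])\big)$ by chasing the sequences $0\to A[\pi^r]\to A\xrightarrow{\pi^r}A\to 0$ against the filtration; the right-hand side is manifestly preserved by $\phi_{\chi,j}$.

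For $v=\ell\in\Sigma$ with $\ell\ne p$, the Greenberg condition is the unramified one, and I would show that $(i_r^*)^{-1}\big(H^1_{\mathrm{unr}}(\Q_\ell,A_{h\chi(-j)})\big)$ coincides with the intrinsic condition $\mathrm{Ker}\big(H^1(\Q_\ell,A_{h\chi(-j)}[\pi^r])\to H^1(I_\ell,A_{h\chi(-j)}[\pi^r])\big)$, which is automatically carried to its $f_2$-analogue by the $G_\Q$-isomorphism $\phi_{\chi,j}$. For $\ell\nmid NpM$ this coincidence is immediate, since $A_{h\chi(-j)}$ is unramified and $A_{h\chi(-j)}^{I_\ell}$ is divisible. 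The genuine work is at $\ell\in S$: here hypothesis (1), $\mathrm{cond}_\ell(\bar\rho_{f_i})=\ell$, pins down $\bar\rho_{f_i}|_{I_\ell}$ as tamely ramified of a fixed shape, and hypothesis (2), the condition $(\mathrm{C'_{i,\ell}})$ on the order of $\epsilon_{i,\ell}^2$, excludes the degenerate cases in which extra inertia-invariants would appear upon reduction, so that $A_{h\chi(-j)}[\pi^r]^{I_\ell}=\big(A_{h\chi(-j)}^{I_\ell}\big)[\pi^r]$ and the pulled-back and intrinsic unramified conditions agree.

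\textbf{I expect this last step to be the main obstacle}: controlling the inertia-invariants and establishing the coincidence of the $(i_r^*)^{-1}$-defined condition with the intrinsic one at the primes $\ell\in S$, alongside the preservation of the ordinary filtration at $p$. It is precisely here that the finite-level ($\pi^r$) nature of the Selmer condition can diverge from the naive reduction of the characteristic-zero condition, and the conductor hypothesis (1) together with the nebentypus hypotheses (2) and (3) are exactly what close that gap uniformly in the critical twist $j$ and the quadratic character $\chi$.
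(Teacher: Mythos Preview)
Your overall strategy coincides with the paper's: give each local $\pi^r$-Greenberg condition an intrinsic description depending only on the $G_{\Q_v}$-module $A_{h\chi(-j)}[\pi^r]$ (and, at $p$, on $A''_{h\chi(-j)}[\pi^r]$), and then transport via $\phi_{\chi,j}$. Your argument that hypothesis~(3) forces the ordinary filtration on $A_h[\pi^r]$ to be canonical, so that $\phi$ induces $A''_{f_1}[\pi^r]\cong A''_{f_2}[\pi^r]$, is exactly Lemma~\ref{phi''copy}.

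There is, however, a real gap in your claimed identity at $p$:
\[
(i_r^*)^{-1}\bigl(H^1_{\mathrm{Gr}}(\Q_p,A_{h\chi(-j)})\bigr)
=\mathrm{Ker}\bigl(H^1(\Q_p,A_{h\chi(-j)}[\pi^r])\to H^1(I_p,A''_{h\chi(-j)}[\pi^r])\bigr).
\]
The obstruction is $\mathrm{Ker}\bigl(H^1(I_p,A''_{h\chi(-j)}[\pi^r])\to H^1(I_p,A''_{h\chi(-j)})\bigr)\cong (A''_{h\chi(-j)})^{I_p}/\pi^r$. Since $I_p$ acts on $A''_{h\chi(-j)}$ through $\omega_p^{-j}\chi$, this vanishes when $j=0$ or when $(\omega_p^{-j}\chi)|_{I_p}\not\equiv 1\pmod\pi$; but when $j>0$ and $(\omega_p^{-j}\chi)|_{I_p}\equiv 1\pmod\pi$, the invariants $(A''_{h\chi(-j)})^{I_p}$ are finite and nonzero, and your diagram chase fails. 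The paper's Proposition~\ref{ipandip'} repairs this by replacing $I_p$ with the smaller subgroup $I_p'=\mathrm{Gal}(\bar\Q_p/\Q_p^{\mathrm{unr}}(\mu_{p^\infty}))$ in that case: both $\omega_p$ and $\chi$ are trivial on $I_p'$, so $(A''_{h\chi(-j)})^{I_p'}=A''_{h\chi(-j)}$ is divisible, and one checks separately that passing from $I_p$ to $I_p'$ does not enlarge $H^1_{\mathrm{Gr}}(\Q_p,A_{h\chi(-j)})$. The resulting description is still intrinsic in $A''_{h\chi(-j)}[\pi^r]$, so your transport argument survives, but the case split is not optional.

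A smaller correction at $\ell\ne p$: the equality $A_{h\chi(-j)}[\pi^r]^{I_\ell}=(A_{h\chi(-j)}^{I_\ell})[\pi^r]$ is tautological and does not do the work. What you actually need (and what the paper proves via the conductor comparison $\mathrm{cond}_\ell(\rho_{f_i\chi})=\mathrm{cond}_\ell(\bar\rho_{f_i\chi})$ in Proposition~\ref{conductor-equivalence-at-l} and Corollary~\ref{cor3467}) is that $(A_{h\chi(-j)})^{I_\ell}$ is $\pi$-divisible, so that $(A_{h\chi(-j)})^{I_\ell}/\pi^r=0$ and the map $H^1(I_\ell,A_{h\chi(-j)}[\pi^r])\to H^1(I_\ell,A_{h\chi(-j)})$ is injective.
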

Here  $ \text{cond}_\ell(\bar{\rho}_{f_1}) $ and $ \text{cond}_\ell({\rho}_{f_2}) $ are defined following Serre and can be found in \cite[\S 1, Page 135]{li}. The proof of the theorem is divided into several lemmas and propositions. 
\begin{proposition}\label{conductor-equivalence-at-l}
Let $i \in \{1,2\}$ and $f_i \in S_k(\Gamma_0(Np^{t_i}), \epsilon_i)$ with  $(N,p)=1$. Assume $N$ is square-free and $\forall \ell \in S$,  $ \text{cond}_\ell(\bar{\rho}_{f_i})=\ell$. Also assume the hypothesis $(\mathrm{C'_{i,\ell}})$ defined in equation \eqref{conduction-condition126}.  Then for every quadratic character $\chi $ of $G_\Q$, for any $j \in \Z$ and for every prime $\ell \in \Sigma \setminus \{p\}$, $\mathrm{cond}_\ell (\rho_{f_i  \chi (-j)}) = \mathrm{cond}_\ell (\bar{\rho}_{f_i  \chi (-j)})$. 
\end{proposition}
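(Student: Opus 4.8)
\emph{Plan.} Fix $\ell\in\Sigma\setminus\{p\}$ and write $V=V_{f_i\chi(-j)}$ and $\bar V=\bar\rho_{f_i\chi(-j)}$, with $I_\ell$ the inertia subgroup at $\ell$. First I would make two harmless reductions. Since $\ell\neq p$, the $p$-adic cyclotomic character $\omega_p$ is unramified at $\ell$, so $\rho_{f_i\chi(-j)}|_{G_{\Q_\ell}}$ and $\rho_{f_i\chi}|_{G_{\Q_\ell}}$ differ by an unramified twist and have equal conductor (and likewise residually); thus I may take $j=0$. Moreover, as $p$ is odd and $\chi$ is quadratic, its reduction $\bar\chi$ is again a quadratic character with $\mathrm{cond}_\ell(\bar\chi)=\mathrm{cond}_\ell(\chi)$, so $\chi|_{I_\ell}$ and $\bar\chi|_{I_\ell}$ have the same image. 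Writing $\mathrm{cond}_\ell=\dim(V/V^{I_\ell})+\mathrm{Sw}_\ell$, I would observe that the wild (Swan) part is preserved under reduction: the wild inertia $P_\ell\subset I_\ell$ is pro-$\ell$, while the kernel of $GL_2(O)\to GL_2(O/\pi)$ is pro-$p$, so (as $\ell\neq p$) reduction is faithful on $\rho_{f_i}(P_\ell)$, and tensoring by $\chi$ (resp.\ $\bar\chi$) does not alter this. Hence $\mathrm{cond}_\ell(\rho_{f_i\chi})-\mathrm{cond}_\ell(\bar\rho_{f_i\chi})=\dim\bar V^{I_\ell}-\dim V^{I_\ell}\geq 0$, and the whole proposition reduces to showing that the tame inertia invariants do not grow under reduction, i.e.\ $\dim V^{I_\ell}=\dim\bar V^{I_\ell}$.

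Next I would split on $\ell$. If $\ell\nmid N$ then $\rho_{f_i}$ (hence $\bar\rho_{f_i}$) is unramified at $\ell$ by Theorem \ref{rhof}(1), so the inertia action on both $V$ and $\bar V$ comes purely from $\chi$ (resp.\ $\bar\chi$), which have equal conductor, giving $\dim V^{I_\ell}=\dim\bar V^{I_\ell}$. If $\ell\mid N$, then since $N$ is square-free and $(N,p)=1$ the conductor exponent of $\rho_{f_i}$ at $\ell$ is exactly $1$; as supercuspidals have exponent $\geq 2$, local Langlands forces $\rho_{f_i}|_{G_{\Q_\ell}}$ to be either \emph{(a)} of Steinberg type, with nontrivial monodromy (occurring when $\epsilon_i$ is unramified at $\ell$), or \emph{(b)} a ramified principal series $\mathrm{diag}(\mu_1,\mu_2)$ with exactly one ramified character, where necessarily $\mu_1|_{I_\ell}=\epsilon_{i,\ell}|_{I_\ell}$ and $\mu_2|_{I_\ell}=1$ (occurring when $\ell\mid C_i'$). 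When $\chi$ is unramified at $\ell$, twisting leaves the inertia action untouched, so $\dim V^{I_\ell}=1$, while $\dim\bar V^{I_\ell}=\dim\bar V_{f_i}^{I_\ell}=1$ by hypothesis (1) that $\mathrm{cond}_\ell(\bar\rho_{f_i})=\ell$; the two agree.

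The heart of the argument is the case $\ell\mid N$ with $\chi$ \emph{ramified} at $\ell$, so that $\eta:=\chi|_{I_\ell}=\bar\chi|_{I_\ell}$ has order $2$. In the Steinberg case (a), twisting the monodromy-fixed line by the ramified $\eta$ destroys all inertia invariants integrally and residually, so $\dim V^{I_\ell}=\dim\bar V^{I_\ell}=0$. In the ramified principal series case (b), put $\psi:=\epsilon_{i,\ell}|_{I_\ell}$; the two twisted inertia characters are $\psi\eta$ and $\eta$ with $\eta\neq 1$, so $\dim V^{I_\ell}=1$ precisely when $\psi=\eta$ and is $0$ otherwise, while $\dim\bar V^{I_\ell}=1$ precisely when $\bar\psi=\eta$. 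Thus I must show that ``$\psi=\eta$'' and ``$\bar\psi=\eta$'' are equivalent. One implication is immediate; conversely, if $\bar\psi=\eta$ has order $2$ then $\mathrm{ord}(\epsilon_{i,\ell})=\mathrm{ord}(\psi)=2p^b$ for some $b\geq 0$ (reduction mod $\pi$ kills exactly the $p$-part of the order), whence $\mathrm{ord}(\epsilon_{i,\ell}^2)=p^b$, and hypothesis $(\mathrm{C'_{i,\ell}})$ forces $b=0$, so $\psi=\bar\psi=\eta$.

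The step I expect to be the main obstacle is exactly this ramified principal series subcase, and the role of $(\mathrm{C'_{i,\ell}})$ in it: without that hypothesis a ramified nebentypus character of order $2p^b$ ($b\geq 1$), after twisting by the quadratic $\chi$, could produce an inertia-invariant line only after reduction mod $\pi$, forcing a genuine conductor drop; $(\mathrm{C'_{i,\ell}})$ is precisely what rules this out. All remaining cases — unramified $\ell$, the Steinberg case, and the $\chi$-unramified case — are comparatively routine once the reduction to tame inertia invariants and the preservation of the Swan part are in place. Collecting the cases gives $\dim V^{I_\ell}=\dim\bar V^{I_\ell}$ for every $\ell\in\Sigma\setminus\{p\}$, and hence the asserted equality $\mathrm{cond}_\ell(\rho_{f_i\chi(-j)})=\mathrm{cond}_\ell(\bar\rho_{f_i\chi(-j)})$.
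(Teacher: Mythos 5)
Your proposal is correct and follows essentially the same route as the paper: reduce to $j=0$, split into the same cases according to whether $\ell\mid N$, whether $\ell\mid C_i'$, and whether $\chi$ is ramified at $\ell$, and use hypothesis $(\mathrm{C'_{i,\ell}})$ in exactly the same way to rule out a residual-only coincidence $\bar\epsilon_{i,\ell}\bar\chi_\ell=1$ in the ramified principal series case. The only cosmetic difference is that you prove the equality of Swan conductors directly from the pro-$\ell$/pro-$p$ disjointness, where the paper cites Livne's Proposition 1.1, which encodes the same fact.
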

\begin{proof}
Let $i \in \{1,2\}.$ Recall, $M =$ conductor of $\chi$ and $C_i= $ conductor of $\epsilon_i$. Then $M=D$ or $4D$ with $D$ square-free. Note as $\omega_p$ is unramified at $\ell$, $\mathrm{cond}_\ell (\rho_{f_i  \chi (-j)}) =\mathrm{cond}_\ell (\rho_{f_i  \chi})$  and $\mathrm{cond}_\ell (\bar{\rho}_{f_i \chi (-j)}) = \mathrm{cond}_\ell (\bar{\rho}_{f_i  \chi}).$ Thus it suffices to show that  for every quadratic character $\chi $ of $G_\Q$,  $\mathrm{cond}_\ell (\rho_{f_i \chi  }) = \mathrm{cond}_\ell (\bar{\rho}_{f_i  \chi}).$ We prove this by considering various cases.

\medskip

\underline{Case 1} $\ell || N$ and  $\ell \nmid M$: \\
Since $\ell\nmid M$, $\chi$ is unramified and we have $\mathrm{cond}_\ell (\rho_{f_i \chi }) = \mathrm{cond}_\ell (\rho_{f_i})$ and  $\mathrm{cond}_\ell (\bar{\rho}_{{f_i}  \chi }) = \mathrm{cond}_\ell (\bar{\rho}_{f_i})$. Now from the first assumption of the proposition, we have $\mathrm{cond}_\ell (\rho_{f_i})= \mathrm{cond}_\ell (\bar{\rho}_{f_i})$. Therefore $\mathrm{cond}_\ell (\rho_{{f_i}  \chi }) = \mathrm{cond}_\ell (\bar{\rho}_{{f_i}  \chi}).$

\medskip

\underline{Case 2} $\ell \nmid N$ and  $\ell \mid M$: \\
Since $\ell\nmid N$, $\rho_{f_i}$ is unramified. Therefore  $\mathrm{cond}_\ell (\rho_{f_i  \chi})= \mathrm{cond}_\ell (\chi)$ and $\mathrm{cond}_\ell (\bar{\rho}_{f_i \chi})=  \mathrm{cond}_\ell (\bar{\chi})$ where $\bar{\chi}$ denote the residual character mod $\pi$ associated to $\chi$. Since $\chi$ is a quadratic character and $p\neq 2$, $\mathrm{cond}_\ell (\chi)= \mathrm{cond}_\ell (\bar{\chi})$. This implies that $\mathrm{cond}_\ell (\rho_{f_i \chi  }) = \mathrm{cond}_\ell (\bar{\rho}_{f_i \chi }).$

\medskip

\underline{Case 3} $\ell || N, \ell \nmid C'_i$ and  $ \ell | M$:\\
By our assumption,   $\mathrm{cond}_\ell (\rho_{f_i}) = \mathrm{cond}_\ell(\bar{\rho}_{{f_i}})=\ell$.  Then from \cite[Page 135]{li}, we get
$$ \mathrm{cond}_\ell(\rho_{{f_i}}) = \ell^{\mathrm{codim} ~\rho_{f_i}^{I_\ell} + sw(\rho_{{f_i}}^{ss})} \quad \text{ and }  \quad
 \mathrm{cond}_\ell(\bar{\rho}_{{f_i}}) = \ell^{\mathrm{codim}~ \bar{\rho}_{f_i}^{I_\ell} + sw(\bar{\rho}_{{f_i}})}  $$
In particular, $\mathrm{codim}~ \bar{\rho}_{{f_i}}^{I_\ell}\leq 1$ and therefore $\mathrm{dim}~ \bar{\rho}_{{f_i}}^{I_\ell} \geq 1$. Since $\bar{\rho}_{{f_i}}$ is ramified at $\ell$,  $\mathrm{dim}~ \bar{\rho}_{{f_i}}^{I_\ell} = 1$. This implies that $\bar{\rho}_{{f_i}}$ has an unramified submodule, say $\bar{V}_1$. Put $\bar{V}_2:= \bar{V}_{{f_i}}/\bar{V}_1$ where $\bar{V}_{{f_i}}$ denote the vector space corresponding to $\bar{\rho}_{f_i}$. Let $\bar{\chi}_1$ and $\bar{\chi}_2$ be the characters associated to $\bar{V}_1$ and $\bar{V}_2$ respectively.  
As $\ell \nmid C'_i$, we get  $\bar{\chi}_1\bar{\chi}_2=\bar{\omega}_p^{k-1}\bar{\epsilon}_i$. This implies that $\bar{\chi}_2$ is also unramified.  Thus we have $ \bar{\rho}_{{f_i}} \sim  \begin{pmatrix} \bar{\chi}_1  & * \\  0 & \bar{\chi}_2 \end{pmatrix} $ and consequently\[ \bar{\rho}_{{f_i}\chi} = \bar{\rho}_{{f_i}}\otimes{\bar{\chi}} \sim  \begin{pmatrix} \bar{\chi}_1\bar{\chi}  & * \\ 0 & \bar{\chi}_2\bar{\chi}  \end{pmatrix}.\]Now  $\chi$ being quadratic and  $\ell \mid M$,  both $\chi$ and $\bar{\chi}$ are ramified  ramified at $\ell$. In particular, 
  $(\bar{V}_{{f_i}} \otimes \overline{\chi })^{I_\ell} =0$ i.e.   $\mathrm{codim} ~(\bar{V}_{{f_i}} \otimes \overline{\chi })^{I_\ell}= 2$.
On the other hand, 
$$\rho_{{f_i}  \chi}\mid_{G_\ell}~\sim ~\begin{pmatrix}
\eta_i\omega_{p}\chi &*\\
0 &\eta_i\chi \end{pmatrix},$$
where $\eta_i$ is an unramified character (see  \cite[Theorem 3.26(3)]{hi}). Again as  $\chi$ is  ramified at ${\ell}$, we deduce $(V_{{f_i}  \chi})^{I_\ell} =0$; in other words,  $\mathrm{codim}~ (V_{{f_i} \chi})^{I_\ell}= 2$ as well. Further by \cite[Prop. 1.1]{li},  $sw(\bar{V}_{{f_i}} \otimes \overline{\chi }) = sw((V_{{f_i}\chi})^{ss})$. Hence $\mathrm{cond}_\ell (\rho_{{f_i}  \chi}  ) = \mathrm{cond}_\ell (\bar{\rho}_{{f_i} \chi })$ holds true in this case.

\medskip

\underline{Case 4} $\ell || N, \ell \mid C'_i$ and  $ \ell | M$: \\
In this case, ${{\rho}_{f_i}}_{\mid_{I_\ell}} \sim  \begin{pmatrix} \epsilon_{i,\ell}   & 0 \\ 0 & 1 \end{pmatrix}$ and  
$\bar{\rho}_{{f_i}_{\mid_{I_\ell}}} \sim \begin{pmatrix}  \bar{\epsilon}_{i,\ell}   & 0 \\ 0 & 1 \end{pmatrix}$ (see \cite[Theorem 3.26(3)]{hi}). Similarly, $${{\rho}_{f_i  \chi}}_{\mid_{I_\ell}} \sim \begin{pmatrix} \epsilon_{i,\ell}\chi   & 0 \\ 0 & \chi \end{pmatrix} \quad \text{ and } \quad \bar{\rho}_{{f_i \chi}_{\mid_{I_\ell}}} \sim \begin{pmatrix}  \bar{\epsilon}_{i,\ell}\bar{\chi}   & 0 \\ 0 & \bar{\chi}  \end{pmatrix}.$$ Note that as $\ell \mid M$,  both $\chi$ and  $\bar{\chi}$ are ramified at $\ell$ as in the previous case.  

\noindent First we consider the subcase when $\bar{\epsilon}_i\bar{\chi}$ is ramified at $\ell$. This implies that $\epsilon_i \chi$ is also ramified. Thus   $\mathrm{codim}~ (\bar{V}_{{f_i}} \otimes \overline{\chi })^{I_\ell}= 2 = \mathrm{codim}~ (V_{f_i \chi})^{I_\ell}$ holds.

\noindent Next we assume $\bar{\epsilon}_i\bar{\chi}$ is unramified i.e. ${\bar{\epsilon}_{i,\ell}\bar{\chi}_\ell}$ is trivial.
 Then we have ${\bar{\epsilon}_{i,\ell}}^2$ is trivial. Now by the second assumption of the proposition, the order of $\epsilon_{i,\ell}^2 $ is not a positive power of  $p$. Hence ${\bar{\epsilon}_{i,\ell}}^2$ is trivial gives $\epsilon^2_{i,\ell
 }$ is also trivial.
 Thus  $\epsilon_{i,\ell}$ and ${\epsilon_{i,\ell} \chi_\ell}$ are both quadratic characters. Hence  ${\bar{\epsilon}_{i,\ell}\bar{ \chi}_\ell}$ is trivial implies ${\epsilon_{i,\ell} \chi_\ell}$ is trivial.
 Then using ${\rho}_{{f_i \chi}_{\mid_{I_\ell}}} \sim \epsilon_{i,\ell}\chi_\ell \oplus \chi_\ell$ and   $\chi$ is ramified at $\ell$, we deduce $\mathrm{codim} ~ (\bar{V}_{f_i} \otimes \overline{\chi })^{I_\ell}= 1 = \mathrm{codim} ~(V_{f_i  \chi } )^{I_\ell}$. Again from  \cite[Prop. 1.1]{li},  $sw(\bar{V}_{f_i} \otimes \overline{\chi }) = sw((V_{f_i\chi} )^{ss})$, hence we obtain $\mathrm{cond}_\ell (\rho_{f_i  \chi  }) = \mathrm{cond}_\ell (\bar{\rho}_{f_i  \chi })$, as required.
\end{proof}

\begin{corollary}\label{cor3467}
We keep the hypotheses of Proposition \ref{conductor-equivalence-at-l}. Then for  every quadratic character $\chi $ of $G_\Q$ and for the  $(-j)^{th}$ Tate twist of $f_i\otimes\chi$, we have $A^{I_\ell}_{f_i \chi (-j)}$ is divisible. 
\end{corollary}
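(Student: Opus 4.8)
The plan is to read off the divisibility of $A^{I_\ell}_{f_i\chi(-j)}$ directly from the conductor equality supplied by Proposition \ref{conductor-equivalence-at-l}; this is the familiar principle that $A^{I_\ell}$ is divisible precisely when the conductors of $\rho$ and $\bar\rho$ agree. Fix $\ell \in \Sigma\setminus\{p\}$ and abbreviate $V = V_{f_i\chi(-j)}$, $T = T_{f_i\chi(-j)}$, $A = A_{f_i\chi(-j)}=V/T$ and $\bar A = A[\pi]$, all regarded as $I_\ell$-modules. Since $\ell\neq p$, the exponent of the $\ell$-conductor splits into its tame and wild parts, $\mathrm{cond}_\ell(\rho_{f_i\chi(-j)}) = \ell^{\mathrm{codim}\, V^{I_\ell} + sw(\rho_{f_i\chi(-j)}^{ss})}$, and likewise for $\bar\rho_{f_i\chi(-j)}$. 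As in the proof of Proposition \ref{conductor-equivalence-at-l}, \cite[Prop. 1.1]{li} shows that the Swan conductors of $\rho_{f_i\chi(-j)}$ and $\bar\rho_{f_i\chi(-j)}$ coincide; therefore the equality $\mathrm{cond}_\ell(\rho_{f_i\chi(-j)}) = \mathrm{cond}_\ell(\bar\rho_{f_i\chi(-j)})$ of Proposition \ref{conductor-equivalence-at-l} is equivalent to the equality of tame exponents
$$\dim_{K_\p} V^{I_\ell} = \dim_{O/\pi} \bar A^{I_\ell}.$$

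Next I would translate both sides into invariants of the $O$-module $A^{I_\ell}$. Since $\bar A = A[\pi]$, taking $I_\ell$-invariants gives $\bar A^{I_\ell} = (A^{I_\ell})[\pi]$, so the right-hand side above is the $O/\pi$-dimension of the $\pi$-torsion of $A^{I_\ell}$. For the left-hand side, $T^{I_\ell} = T\cap V^{I_\ell}$ is an $O$-lattice in $V^{I_\ell}$, so the image of $V^{I_\ell}$ in $A$ is a divisible submodule $\cong (K_\p/O)^{\dim_{K_\p} V^{I_\ell}}$; moreover the long exact cohomology sequence attached to \eqref{pif} shows that the cokernel of $V^{I_\ell}\to A^{I_\ell}$ embeds in $H^1(I_\ell,T)_{\tor}$, which is finite. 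Hence $\mathrm{corank}_O A^{I_\ell} = \dim_{K_\p} V^{I_\ell}$. As $A$ is a cofinitely generated cofree $O$-module over the discrete valuation ring $O$, its submodule $A^{I_\ell}$ is cofinitely generated, and we may write $A^{I_\ell}\cong (K_\p/O)^{s}\oplus F$ with $s = \mathrm{corank}_O A^{I_\ell}$ and $F$ a finite $O$-module.

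Finally, $\dim_{O/\pi}(A^{I_\ell})[\pi] = s + \dim_{O/\pi}F[\pi]$, so the dimension equality displayed above forces $F[\pi]=0$, whence $F=0$ and $A^{I_\ell}$ is divisible. The only inputs beyond linear algebra are the identification $\mathrm{corank}_O A^{I_\ell} = \dim_{K_\p}V^{I_\ell}$ and the equality of Swan conductors, both of which are standard; I expect the principal point requiring care to be merely the bookkeeping that matches $O$-coranks against $\pi$-torsion dimensions, and I do not anticipate a serious obstacle.
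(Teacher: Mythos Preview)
Your argument is correct and is precisely the standard deduction that the paper invokes by citing \cite[Lemma 4.1.2]{epw}: equality of $\ell$-conductors together with invariance of the Swan part forces $\dim V^{I_\ell}=\dim \bar A^{I_\ell}$, which in turn forces the finite part of $A^{I_\ell}$ to vanish. The paper's proof is a one-line reference to this lemma, so your approach and the paper's are essentially identical; you have simply unpacked the cited result.
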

\proof This is a direct consequence of Proposition \ref{conductor-equivalence-at-l} and the  proof of  \cite[Lemma 4.1.2]{epw}.  \qed
\begin{lemma}\label{sel5971}
Let us keep the hypotheses of Proposition \ref{conductor-equivalence-at-l}. Then for every $q \in \Sigma \setminus  \{p\}$, every $\chi$ and every $j$, we have 
$$H^1_\mathrm{Gr}(\Q_q,  A_{f_i\chi(-j)}[\pi^r])= \text{Ker}\Big(H^1(\Q_q,  A_{f_i\chi(-j)}[\pi^r]) \lra H^1(I_q,  A_{f_i\chi(-j)}[\pi^r])\Big).$$
\end{lemma}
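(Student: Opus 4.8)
The plan is to unwind the definition of the $\pi^r$-Greenberg local condition and reduce the asserted equality to the injectivity of a single restriction map on the finite module $A[\pi^r]$, which will follow immediately from the divisibility established in Corollary \ref{cor3467}. Throughout write $A := A_{f_i\chi(-j)}$. Since $A_h = V_h/T_h \cong (K_{\mathfrak p}/O)^{\oplus 2}$ is $\pi$-divisible and twisting by a character preserves divisibility, multiplication by $\pi^r$ is surjective on $A$, so $0 \to A[\pi^r] \xrightarrow{i_r} A \xrightarrow{\pi^r} A \to 0$ is exact, with $i_r$ the inclusion of \eqref{ifr}. By \eqref{sel-mod-r-general} the left-hand side of the lemma is $H^1_\mathrm{Gr}(\Q_q, A[\pi^r]) = (i_r^*)^{-1}(H^1_\mathrm{Gr}(\Q_q,A))$, and for $q \neq p$ the definition gives $H^1_\mathrm{Gr}(\Q_q,A) = \mathrm{Ker}(\mathrm{res}_A)$, where $\mathrm{res}_A \colon H^1(\Q_q,A) \to H^1(I_q,A)$ is restriction to inertia.

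First I would record the commutative square attached to the inclusion $A[\pi^r] \hookrightarrow A$: its rows are the induced maps $i_r^* \colon H^1(\Q_q, A[\pi^r]) \to H^1(\Q_q,A)$ and $j_r^* \colon H^1(I_q, A[\pi^r]) \to H^1(I_q,A)$, and its columns are the restriction maps $\mathrm{res}_{A[\pi^r]}$ and $\mathrm{res}_A$. Since the inclusion is $I_q$-equivariant, the square commutes, i.e. $\mathrm{res}_A \circ i_r^* = j_r^* \circ \mathrm{res}_{A[\pi^r]}$. Combining this with the previous paragraph, a class $c$ lies in $H^1_\mathrm{Gr}(\Q_q,A[\pi^r])$ if and only if $\mathrm{res}_A(i_r^*(c)) = 0$, equivalently $j_r^*(\mathrm{res}_{A[\pi^r]}(c)) = 0$. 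The target equality $H^1_\mathrm{Gr}(\Q_q,A[\pi^r]) = \mathrm{Ker}(\mathrm{res}_{A[\pi^r]})$ will therefore follow once I show that $j_r^*$ is injective.

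The hard part --- really the only substantive step --- will be this injectivity, and it is exactly where the hypotheses enter. Taking $I_q$-cohomology of the short exact sequence above yields the exact piece $H^0(I_q,A) \xrightarrow{\pi^r} H^0(I_q,A) \xrightarrow{\delta} H^1(I_q, A[\pi^r]) \xrightarrow{j_r^*} H^1(I_q,A)$, so that $\mathrm{Ker}(j_r^*) = \mathrm{Im}(\delta) \cong A^{I_q}/\pi^r A^{I_q}$. By Corollary \ref{cor3467} the module $A^{I_q} = A_{f_i\chi(-j)}^{I_q}$ is divisible, hence $\pi^r A^{I_q} = A^{I_q}$ and $A^{I_q}/\pi^r A^{I_q} = 0$; thus $j_r^*$ is injective. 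Feeding this back into the criterion of the second paragraph gives the claimed identification of $H^1_\mathrm{Gr}(\Q_q, A[\pi^r])$ with $\mathrm{Ker}(\mathrm{res}_{A[\pi^r]})$. The whole argument thus rests on Corollary \ref{cor3467}, into which the conductor conditions of Proposition \ref{conductor-equivalence-at-l} and the hypotheses $(\mathrm{C'_{i,\ell}})$ have already been absorbed.
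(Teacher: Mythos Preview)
Your proof is correct and follows essentially the same approach as the paper's own proof: both set up the commutative square relating $H^1(\Q_q,-)$ and $H^1(I_q,-)$ for $A[\pi^r]\hookrightarrow A$, reduce the statement to the injectivity of the map $H^1(I_q,A[\pi^r])\to H^1(I_q,A)$, identify its kernel with $A^{I_q}/\pi^r A^{I_q}$ via the long exact sequence, and then invoke Corollary~\ref{cor3467} to conclude this quotient vanishes. Your write-up is simply a bit more expansive about the definitions and the exactness of the Kummer sequence.
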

\proof Consider the commutative diagram 

\xymatrix{
 H^{1}(\Q_{q},  A_{{f_i}\chi(-j)}[\pi^r]) \ar[r]^{\phi_r}\ar[d]^{i_r^*} & H^{1}(I_q,  A_{{f_i}\chi(-j)}[\pi^r]) \ar[d]^{s_r}\\
 H^{1}(\Q_{q},  A_{{f_i}\chi(-j)}) \ar[r]^{\phi} & H^{1}(I_q,  A_{{f_i}\chi(-j)})
}

\noindent Now $b \in H^1_\mathrm{Gr}(\Q_q,  A_{{f_i}\chi(-j)}[\pi^r])$  if and only if $ b \in \text{Ker}(\phi\circ i_r^*) =\text{Ker}( s_r\circ \phi_r)$. As $s_r$ is induced by the Kummer map, $\text{Ker}( s_r) \cong  \frac{ (A_{{f_i}\chi(-j)})^{I_q}}{\pi^r({A_{{f_i}\chi(-j)}})^{I_q}}$.  It follows from Corollary \ref{cor3467} that $\text{ker}(s_r) =0$ which proves the lemma. \qed

\medskip

Using Lemma \ref{sel5971}, the following expression  of  $S_\mathrm{Gr}(A_{{f_i}\chi(-j)}[\pi^r]/\Q)$ is immediate. 

\medskip

\begin{lemma}\label{sel7892}
We keep the hypotheses of Proposition \ref{conductor-equivalence-at-l}. Then we have an exact sequence \begin{footnotesize}{
$$ 0 \rightarrow S_\mathrm{Gr}(A_{f_i\chi(-j)}[\pi^r]/\Q) \lra H^1(G_\Sigma(\Q), A_{f_i\chi(-j)}[\pi^r]) \lra \underset {q \in \Sigma, q \neq p}{\prod} H^1(I_q,  A_{f_i\chi(-j)}[\pi^r])\times \frac{H^1(\Q_p,  A_{{f_i}\chi(-j)}[\pi^r])}{H^1_{Gr}(\Q_p,  A_{{f_i}\chi(-j)}[\pi^r])}.
$$}\end{footnotesize}
\end{lemma}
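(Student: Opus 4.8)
The plan is to unwind the definition of the Greenberg Selmer group given in \eqref{sel-mod-r-general} and then simplify the local factors away from $p$ using Lemma \ref{sel5971}. By definition, $S_\mathrm{Gr}(A_{f_i\chi(-j)}[\pi^r]/\Q)$ is the kernel of the global-to-local restriction map from $H^1(G_\Sigma(\Q), A_{f_i\chi(-j)}[\pi^r])$ into the product over all $v \in \Sigma$ of the local quotients $H^1(\Q_v, A_{f_i\chi(-j)}[\pi^r])/H^1_\mathrm{Gr}(\Q_v, A_{f_i\chi(-j)}[\pi^r])$. First I would split this product into the single factor at $p$ together with the factors at the primes $q \in \Sigma$ with $q \neq p$, leaving the factor at $p$ untouched.

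For each prime $q \neq p$, Lemma \ref{sel5971} identifies $H^1_\mathrm{Gr}(\Q_q, A_{f_i\chi(-j)}[\pi^r])$ with the kernel of the inertia restriction map $\phi_r \colon H^1(\Q_q, A_{f_i\chi(-j)}[\pi^r]) \lra H^1(I_q, A_{f_i\chi(-j)}[\pi^r])$. Hence $\phi_r$ factors through an injection $H^1(\Q_q, A_{f_i\chi(-j)}[\pi^r])/H^1_\mathrm{Gr}(\Q_q, A_{f_i\chi(-j)}[\pi^r]) \inj H^1(I_q, A_{f_i\chi(-j)}[\pi^r])$. Consequently, for a global class the vanishing of its image in the local quotient at $q$ is equivalent to the vanishing of its restriction to $I_q$; this lets me replace the factor $H^1(\Q_q, A_{f_i\chi(-j)}[\pi^r])/H^1_\mathrm{Gr}(\Q_q, A_{f_i\chi(-j)}[\pi^r])$ by $H^1(I_q, A_{f_i\chi(-j)}[\pi^r])$ for every $q \neq p$ without altering the kernel of the global-to-local map.

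Combining the two observations, the Selmer group $S_\mathrm{Gr}(A_{f_i\chi(-j)}[\pi^r]/\Q)$ is exactly the kernel of the resulting map into $\prod_{q \neq p} H^1(I_q, A_{f_i\chi(-j)}[\pi^r]) \times H^1(\Q_p, A_{f_i\chi(-j)}[\pi^r])/H^1_\mathrm{Gr}(\Q_p, A_{f_i\chi(-j)}[\pi^r])$, and recording this kernel description as an exact sequence yields the asserted statement, exactness at the first two terms being tautological from the description as a kernel.

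Since the statement is declared to be immediate, there is no genuine obstacle: all the substance is carried by Lemma \ref{sel5971}, which in turn rests on Corollary \ref{cor3467} and Proposition \ref{conductor-equivalence-at-l}. The only point requiring a moment's care is verifying that substituting $H^1(I_q,A_{f_i\chi(-j)}[\pi^r])$ for the local quotient genuinely preserves the kernel, and this is immediate from the factorization of $\phi_r$ through the injection noted above.
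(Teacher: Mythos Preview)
Your proposal is correct and matches the paper's own argument: the paper states that the lemma is immediate from Lemma \ref{sel5971}, and your write-up simply makes explicit the unwinding of the definition \eqref{sel-mod-r-general} together with the observation that, for $q\neq p$, the injection $H^1(\Q_q, A_{f_i\chi(-j)}[\pi^r])/H^1_\mathrm{Gr}(\Q_q, A_{f_i\chi(-j)}[\pi^r]) \hookrightarrow H^1(I_q, A_{f_i\chi(-j)}[\pi^r])$ does not change the kernel.
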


\medskip

Next we study  the $p$-part of the local terms defining $S_\mathrm{Gr}(A_{{f_i}\chi(-j)}[\pi^r]/\Q)$.

\medskip

\begin{proposition}\label{ipandip'}
Set $I_p' =\mathrm{Gal}(\bar{\Q}_p/ \Q^{\text{unr}}_p(\mu_{p^\infty}))$ and  $i \in \{f_1,f_2\}$. Then 
\begin{footnotesize}{
$$
H^1_\mathrm{Gr}(\Q_p,  A_{{f_i}\chi(-j)}[\pi^r]) =
\begin{cases} 
\text{Ker}\Big(H^1(\Q_p,   A_{{f_i}\chi(-j)}[\pi^r]) \lra H^1(I_p,A''_{{f_i}\chi(-j)}[\pi^r])\Big) & \text{ if  } (\omega_p^{-j}  \chi)_{\mid_{I_p}} \neq 1 (\text{mod } \pi)\text{ or } j =0. \\
 \text{Ker}\Big(H^1(\Q_p,   A_{{f_i}\chi(-j)}[\pi^r]) \lra H^1(I'_p, A''_{{f_i}\chi(-j)}[\pi^r])\Big) &  \text{ otherwise}.
\end{cases}
$$
}\end{footnotesize}
\end{proposition}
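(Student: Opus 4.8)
The plan is to reduce the statement to a computation in inertia cohomology of the unramified quotient $A''_{f_i\chi(-j)}$, and then to split into the two cases according to whether the $I_p$-fixed module is $\pi$-divisible.

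First I would unwind the definitions. By \eqref{sel-mod-r-general} we have $H^1_\mathrm{Gr}(\Q_p, A_{f_i\chi(-j)}[\pi^r]) = (i_r^*)^{-1}\big(H^1_\mathrm{Gr}(\Q_p, A_{f_i\chi(-j)})\big)$, and $H^1_\mathrm{Gr}(\Q_p, A_{f_i\chi(-j)})=\Ker\big(H^1(\Q_p, A_{f_i\chi(-j)})\lra H^1(I_p, A''_{f_i\chi(-j)})\big)$ by definition. I would record the commutative square whose two horizontal arrows are ``restrict to $I_p$ and then project along $A_{f_i\chi(-j)}\twoheadrightarrow A''_{f_i\chi(-j)}$'', written for the $[\pi^r]$-module on top and for the full module on the bottom, and whose vertical arrows are induced by $i_{f_i\chi(-j),r}$. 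Writing $\alpha_r$ for the top arrow into $H^1(I_p, A''_{f_i\chi(-j)}[\pi^r])$ and $t_r\colon H^1(I_p, A''_{f_i\chi(-j)}[\pi^r])\lra H^1(I_p, A''_{f_i\chi(-j)})$ for the right vertical arrow, commutativity gives $H^1_\mathrm{Gr}(\Q_p, A_{f_i\chi(-j)}[\pi^r])=\alpha_r^{-1}(\Ker t_r)$, so everything reduces to identifying $\Ker t_r$. Since $(A''_{f_i\chi(-j)})^\vee$ is free of rank one (it is the twist of the free module of Theorem \ref{rhof}(2)), $A''_{f_i\chi(-j)}$ is $\pi$-divisible; the long exact $I_p$-cohomology sequence of $0\lra A''_{f_i\chi(-j)}[\pi^r]\lra A''_{f_i\chi(-j)}\by{\pi^r} A''_{f_i\chi(-j)}\lra 0$ then yields $\Ker t_r\cong (A''_{f_i\chi(-j)})^{I_p}/\pi^r(A''_{f_i\chi(-j)})^{I_p}$, which vanishes exactly when $(A''_{f_i\chi(-j)})^{I_p}$ is $\pi$-divisible.

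Because $\lambda_{f_i}$ is unramified, $I_p$ acts on $A''_{f_i\chi(-j)}$ through $(\chi\omega_p^{-j})|_{I_p}$. In the first case — when this character is residually nontrivial, or when $j=0$ — I would check that $(A''_{f_i\chi(-j)})^{I_p}$ is either $0$ (there exists $\sigma\in I_p$ with $(\chi\omega_p^{-j})(\sigma)-1\in O^\times$) or is the whole divisible module; either way it is $\pi$-divisible, so $\Ker t_r=0$ and $H^1_\mathrm{Gr}(\Q_p, A_{f_i\chi(-j)}[\pi^r])=\Ker\alpha_r$, which is precisely the first displayed formula. The delicate case is the second, where $(\chi\omega_p^{-j})|_{I_p}\equiv 1\ (\mathrm{mod}\ \pi)$ and $j\neq 0$: now $(A''_{f_i\chi(-j)})^{I_p}$ is a nonzero finite group, $\Ker t_r\neq 0$, the $I_p$-description fails, and I must produce the $I_p'$-description instead. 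The crucial input is that $I_p'$ acts trivially on $A''_{f_i\chi(-j)}$: indeed $\omega_p|_{I_p'}=1$ by definition of $I_p'$, while any quadratic character of $G_{\Q_p}$ is trivial on $I_p'=\Gal(\bar\Q_p/\Q_p^{\mathrm{unr}}(\mu_{p^\infty}))$ because every quadratic extension of $\Q_p$ ($p$ odd) lies in $\Q_p^{\mathrm{unr}}(\mu_{p^\infty})$ (the ramified one sits in $\Q_p(\mu_p)$ via the Gauss-sum relation $\sqrt{p^*}\in\Q_p(\mu_p)$, $p^*=(-1)^{(p-1)/2}p$). Writing $\mathrm{res}$ for the restriction $H^1(I_p, A''_{f_i\chi(-j)}[\pi^r])\lra H^1(I_p', A''_{f_i\chi(-j)}[\pi^r])$, it then suffices to prove $\Ker t_r=\Ker\,\mathrm{res}$, since $\alpha_r^{-1}(\Ker t_r)=\alpha_r^{-1}(\Ker\,\mathrm{res})$ is exactly the asserted kernel.

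The inclusion $\Ker t_r\subseteq\Ker\,\mathrm{res}$ is immediate from the explicit cocycle formula for the connecting map, because a $\pi^r$-divisor of an $I_p$-invariant element is automatically $I_p'$-fixed. For the reverse inclusion I would invoke inflation–restriction for $I_p'\trianglelefteq I_p$ (using $(A''_{f_i\chi(-j)}[\pi^r])^{I_p'}=A''_{f_i\chi(-j)}[\pi^r]$), which identifies $\Ker\,\mathrm{res}$ with the inflation of $H^1(I_p/I_p', A''_{f_i\chi(-j)}[\pi^r])$; a functoriality square together with the injectivity of inflation in degree one then reduces the claim to the vanishing of the induced map $\bar t_r$, hence to $H^1(I_p/I_p', A''_{f_i\chi(-j)})=0$. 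This final vanishing is where $j\neq 0$ is used: $I_p/I_p'\cong\Z_p^\times$ acts on the divisible module $A''_{f_i\chi(-j)}$ through $\chi\omega_p^{-j}$, whose restriction to the pro-$p$ part $1+p\Z_p$ is nontrivial precisely because $j\neq 0$, so $H^1$ of this procyclic group with divisible coefficients vanishes. \textbf{The main obstacle} is exactly this second case, namely the reverse inclusion $\Ker\,\mathrm{res}\subseteq\Ker t_r$: it is not formal and rests on the joint input that $I_p'$ acts trivially on $A''_{f_i\chi(-j)}$ and that $H^1(I_p/I_p', A''_{f_i\chi(-j)})=0$, so that passage to $I_p'$ captures precisely the non-$\pi$-divisibility of $(A''_{f_i\chi(-j)})^{I_p}$; the quadratic-character computation at $p$ and the bookkeeping of divisibility are the technical heart.
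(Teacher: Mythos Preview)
Your proposal is correct and follows essentially the same strategy as the paper: both reduce Case~1 to the $\pi$-divisibility of $(A''_{f_i\chi(-j)})^{I_p}$, and both reduce Case~2 to the vanishing $H^1(I_p/I_p',A''_{f_i\chi(-j)})=0$ together with the fact that $I_p'$ acts trivially on $A''_{f_i\chi(-j)}$. The only organizational difference is that in Case~2 the paper first works at the level of $A''_{f_i\chi(-j)}$ (showing the restriction $H^1(I_p,A''_{f_i\chi(-j)})\to H^1(I_p',A''_{f_i\chi(-j)})$ is injective, hence $\Ker\theta=\Ker\psi$) and then descends to $[\pi^r]$ via divisibility of $(A''_{f_i\chi(-j)})^{I_p'}$, whereas you stay at the $[\pi^r]$ level throughout and prove $\Ker t_r=\Ker\,\mathrm{res}$ directly; your explicit justification that $\chi|_{I_p'}=1$ via $\sqrt{p^*}\in\Q_p(\mu_p)$ is a nice addition that the paper leaves implicit.
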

\proof \underline{Case 1}: Either $(\omega^{-j}\chi)_{\mid_{I_p}}   \neq  1 ~(\text{mod } \pi)\text{ or } j =0.$ \\
Consider the commutative diagram 
\xymatrix{
 H^{1}(\Q_{p},  A_{{f_i}\chi(-j)}[\pi^r]) \ar[r]^{\theta_r}\ar[d]^{i_r^*} & H^{1}(I_p,  A_{{f_i}\chi(-j)}^{''}[\pi^r]) \ar[d]^{i_r^{''*} }\\
 H^{1}(\Q_{p},  A_{{f_i}\chi(-j)}) \ar[r]^{\theta} & H^{1}(I_p,  A_{{f_i}\chi(-j)}^{''})
}

It suffices to show in   Case 1 that $H^1_\mathrm{Gr}(\Q_p,  A_{{f_i}\chi(-j)}[\pi^r])  = \mathrm{Ker}(\theta_r)$. From the above  diagram, we observe that $b \in H^1_\mathrm{Gr}(\Q_q,  A_{{f_i}\chi(-j)}[\pi^r])$  if and only if $ b \in \mathrm{Ker}(\theta \circ i_r^*) =\mathrm{Ker}( i_r^{''*}\circ\theta_r)$. Thus it further reduces to show  $\mathrm{Ker}(i_r^{''*}) =0$ in this case. Note that  $\mathrm{Ker}( i_r^{''*}) \cong   (A_{{f_i}\chi(-j)}^{''})^{I_p}/{\pi^r(A_{{f_i}\chi(-j)}^{''})^{I_p}}$. We divide the proof in three subcases. 

First, let  $j =0$ and $\chi$ is unramified at $p$: Then $ A''^{I_p}_{{f_i}\chi(-j)}= A''_{{f_i}\chi}$ is divisible, whence $\mathrm{Ker}(i_r^{''*}) =0$.

Second, let $j=0$ and $\chi$ is ramified: Here being a quadratic character $\chi ~(\text{mod } \pi) $ is also ramified (here we use  $p$ is odd).  Thus $I_p$ acts non-trivially on $ A''_{{f_i}\chi}[\pi]$ and hence $(A''_{{f_i}\chi}[\pi])^{I_p} =0 = (A''_{{f_i}\chi})^{I_p}$ and consequently  $\mathrm{Ker}(i_r^{''*}) =0$. 

Finally, let $j > 0$ and $(\omega^{-j}\chi)_{\mid_{I_p}}   \neq  1 ~(\text{mod } \pi)$: In this scenario, $(\omega_p^{-j}  \chi)_{\mid_{I_p}} (\text{mod } \pi)$ is  nontrivial. Hence $I_p$ again acts non-trivially on $A''_{{f_i}\chi(-j)}[\pi]$ and hence $(A''_{{f_i}\chi(-j)}[\pi])^{I_p} =0 = (A''_{{f_i}\chi(-j)})^{I_p}$ and again we conclude $\mathrm{Ker}(i_r^{''*}) =0$. 

\medskip

\underline{Case 2}:  $j > 0$ and simultaneously $(\omega_p^{-j}  \chi) _{\mid_{I_p}} =1~ (\text{mod } \pi)  $.
We now consider the following commutative diagram
\xymatrix{
 H^{1}(\Q_{p},  A_{{f_i}\chi(-j)}[\pi^r]) \ar[r]^{\psi_r}\ar[d]^{i_r^*} & H^{1}(I_p',  A_{{f_i}\chi(-j)}^{''}[\pi^r]) \ar[d]^{i_r^{''*} }\\
 H^{1}(\Q_{p},  A_{{f_i}\chi(-j)}) \ar[r]^{\psi} & H^{1}(I_p',  A_{{f_i}\chi(-j)}^{''})
}

\noindent From this diagram, it suffices to show that $H^1_\mathrm{Gr}(\Q_p,  A_{{f_i}\chi(-j)}[\pi^r])  = \mathrm{Ker}(\psi_r)$ to complete the proof of the Lemma. As $j >0$, we have $ (A''_{{f_i}\chi(-j)})^{I_p} $ is finite. Moreover, as $I_p/{I_p'} = \text{Gal}(\Q^{\text{unr}}_p(\mu_{p^\infty})/\Q^{\text{unr}}_p)$ is pro-cyclic,  $H^1(I_p/{I'_p}, A''^{I_p'}_{{f_i}\chi(-j)})$ is finite as well. Now the second assumption  $(\omega_p^{-j}  \chi)_{\mid_{I_p}} =1~ (\text{mod } \pi)  $ implies that ${I'_p}$ acts trivially on ${A''_{{f_i}\chi(-j)}}$. Hence $H^1(I_p/{I'_p}, {A''_{{f_i}\chi(-j)}}^{I'_p})= H^1(I_p/{I'_p}, {A''_{{f_i}\chi(-j)}})$ is divisible also. Hence $H^1(I_p/{I'_p}, {A''_{{f_i}\chi(-j)}}^{I'_p}) =0 $. Therefore the natural restriction map $H^{1}(I_p,  A_{{f_i}\chi(-j)}^{''}) \lra H^{1}(I_p',A_{{f_i}\chi(-j)}^{''})$ is injective. Thus we have shown that $$H^1_\mathrm{Gr}(\Q_p,  A_{{f_i}\chi(-j)}) := \mathrm{Ker}\Big(H^1(\Q_p,  A_{{f_i}\chi(-j)}) \lra H^{1}(I_p,  A_{{f_i}\chi(-j)}^{''})\Big) = \mathrm{Ker}(\psi).$$  On the other hand, divisibility of $A''^{I'_p}_{{f_i}\chi(-j)} = A''_{{f_i}\chi(-j)}$ implies that $i_r^{''*}$ is injective. Now by an argument similar to Case 1, we get that $H^1_\mathrm{Gr}(\Q_p,  A_{{f_i}\chi(-j)}[\pi^r])  = \mathrm{Ker}(\psi_r)$. \qed

\medskip

\par From the discussions in Case 1  of Proposition \ref{ipandip'}, we deduce the following corollary.

\medskip

\begin{corollary}\label{78914121881}
Assume that either (i) $j =0$ or 
(ii) $j >0$ and $\omega_p^{-j}  \chi_{\mid_{I_p}} \neq 1 ~ (\text{mod } \pi)$. Then for $i =1,2$, $A''^{I_p}_{{f_i}\chi(-j)}$ is $\pi$-divisible. \qed
\end{corollary}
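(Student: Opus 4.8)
The plan is to read the conclusion off from the case analysis already carried out in Case 1 of Proposition \ref{ipandip'}, whose hypotheses coincide exactly with those of the corollary. First I would record the elementary fact that an $O$-module $M$ with $M=\pi^r M$ for some $r\geq 1$ is automatically $\pi$-divisible: indeed $M=\pi^r M\subseteq \pi M\subseteq M$ forces $\pi M=M$. Applying this to $M:=(A''_{f_i\chi(-j)})^{I_p}$, it suffices to reproduce the vanishing $\Ker(i_r^{''*})=0$ established in Case 1, because $\Ker(i_r^{''*})\cong (A''_{f_i\chi(-j)})^{I_p}/\pi^r(A''_{f_i\chi(-j)})^{I_p}$, the isomorphism coming from the long exact cohomology sequence attached to $0\to A''_{f_i\chi(-j)}[\pi^r]\to A''_{f_i\chi(-j)}\longby{\pi^r} A''_{f_i\chi(-j)}\to 0$ over $I_p$.

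More transparently, I would simply revisit the three subcases of Case 1 directly at the level of $M$. When $j=0$ and $\chi$ is unramified at $p$, the group $I_p$ acts trivially on $A''_{f_i\chi(-j)}$ (as $A''_{f_i}$ is unramified at $p$ and $\omega_p^{-j}|_{I_p}=1$), so $M=A''_{f_i\chi}$, which is $\pi$-divisible since $A''_{f_i}\cong K_{\mathfrak p}/O$ as an $O$-module. In the remaining two subcases I would use that $I_p$ acts non-trivially on the one-dimensional residual space $A''_{f_i\chi(-j)}[\pi]$, so that $(A''_{f_i\chi(-j)}[\pi])^{I_p}=0$; from this I would conclude $M=0$, the zero module being trivially $\pi$-divisible.

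The only point that requires a moment's care — and the closest thing to an obstacle — is the last implication, namely passing from the vanishing of the $I_p$-invariants of the residual space $A''_{f_i\chi(-j)}[\pi]$ to the vanishing of $M=(A''_{f_i\chi(-j)})^{I_p}$ itself. I would handle this by the standard order-of-element argument: if $0\neq x\in M$ had exact order $\pi^m$ with $m\geq 1$, then $\pi^{m-1}x$ would be a nonzero $I_p$-fixed element of $A''_{f_i\chi(-j)}[\pi]$, contradicting $(A''_{f_i\chi(-j)}[\pi])^{I_p}=0$. Since the case division and the computation of the invariants in each case have already been performed in Proposition \ref{ipandip'}, no further work is needed.
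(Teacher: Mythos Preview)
Your proposal is correct and follows exactly the approach the paper intends: the corollary is stated immediately after Proposition~\ref{ipandip'} with the remark that it follows from the discussions in Case~1 there, and you have simply spelled out those discussions---in each of the three subcases $(A''_{f_i\chi(-j)})^{I_p}$ is either the full divisible module or zero. The only extra you have added is the explicit order-of-element argument for the implication $(A''[\pi])^{I_p}=0\Rightarrow (A'')^{I_p}=0$, which the paper leaves implicit.
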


\medskip

\begin{rem}\label{div}
\rm{
We have $ 0 \leq j \leq k-2$.  If we choose $\frac{p-1}{2} > k-2$ i.e. $p > 2k -3$, then $j< \frac{p-1}{2}$ and $\omega_p^j  ~(\text{mod } \pi)$ is not a quadratic character. In particular, we have $(\omega_p^{-j} \chi)_{\mid_{I_p}} \neq 1 ~(\text{mod } \pi)$ and conditions (i) and (ii) of Corollary \ref{78914121881} are satisfied.}

\end{rem}

\medskip

\begin{corollary}\label{6897245} 
We keep the hypotheses of Proposition \ref{conductor-equivalence-at-l}. Then it follows from Lemma \ref{sel5971},  Lemma \ref{sel7892} and Proposition \ref{ipandip'}  that:\\

If either $(\omega_p^{-j}  \chi)_{\mid_{I_p}} \neq 1 ~ (\text{mod } \pi)$ or when  $j =0$, then
\begin{footnotesize}{
$$
S_\mathrm{Gr}( A_{{f_i}\chi(-j)}[\pi^r]/\Q) =  \text{Ker}\Big(H^1(G_\Sigma(\Q),   A_{{f_i}\chi(-j)}[\pi^r]) \lra \underset {q \in \Sigma, q \neq p}{\prod} H^1(I_q,  A_{{f_i}\chi(-j)}[\pi^r])\times {H^1(I_p,  A''_{{f_i}\chi(-j)}[\pi^r])}\Big). 
$$}\end{footnotesize}
In other case i.e.  when $(\omega_p^{-j} \chi)_{\mid_{I_p}} =1 ~(\text{mod } \pi)$ as well as $j > 0$,
 \begin{footnotesize}{$$
S_\mathrm{Gr}( A_{{f_i}\chi(-j)}[\pi^r]/\Q) = \text{Ker}\Big(H^1(G_\Sigma(\Q),   A_{{f_i}\chi(-j)}[\pi^r]) \lra \underset {q \in \Sigma, q \neq p}{\prod} H^1(I_q,  A_{{f_i}\chi(-j)}[\pi^r])\times {H^1(I'_p,  A''_{{f_i}\chi(-j)}[\pi^r])}\Big)
$$
}\end{footnotesize}
\end{corollary}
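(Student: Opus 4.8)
The plan is to read off the asserted description of $S_\mathrm{Gr}(A_{f_i\chi(-j)}[\pi^r]/\Q)$ directly from the defining kernel \eqref{sel-mod-r-general} by substituting, place by place, the explicit form of each Greenberg local condition established above. The single general principle used is elementary: if a local condition is presented as the kernel of a restriction map, say $H^1_\mathrm{Gr}(\Q_v, A_{f_i\chi(-j)}[\pi^r]) = \mathrm{Ker}\big(H^1(\Q_v, A_{f_i\chi(-j)}[\pi^r]) \lra C_v\big)$ for an appropriate target $C_v$ and restriction map $\mathrm{res}_v$, then the induced arrow $H^1(\Q_v, A_{f_i\chi(-j)}[\pi^r])/H^1_\mathrm{Gr}(\Q_v, A_{f_i\chi(-j)}[\pi^r]) \inj C_v$ is injective. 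Hence a global class is locally Greenberg at $v$ precisely when its image under $\mathrm{res}_v \circ \mathrm{loc}_v$ vanishes, and so the kernel cutting out the Selmer group is unaffected if every local quotient is replaced by the composite into $C_v$.

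The cleanest route starts from Lemma \ref{sel7892}, which already rewrites $S_\mathrm{Gr}(A_{f_i\chi(-j)}[\pi^r]/\Q)$ as the kernel of a global-to-local map whose factors at the primes $q \neq p$ are the groups $H^1(I_q, A_{f_i\chi(-j)}[\pi^r])$ (this uses Lemma \ref{sel5971}, and hence the divisibility of $A^{I_q}_{f_i\chi(-j)}$ from Corollary \ref{cor3467}), while the factor at $p$ is still the quotient $H^1(\Q_p, A_{f_i\chi(-j)}[\pi^r])/H^1_\mathrm{Gr}(\Q_p, A_{f_i\chi(-j)}[\pi^r])$. It then remains only to rewrite this single $p$-local quotient. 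Applying the reduction principle of the first paragraph together with Proposition \ref{ipandip'}, the quotient is replaced by $H^1(I_p, A''_{f_i\chi(-j)}[\pi^r])$ when $(\omega_p^{-j}\chi)_{\mid_{I_p}} \neq 1 \ (\text{mod } \pi)$ or $j = 0$, and by $H^1(I'_p, A''_{f_i\chi(-j)}[\pi^r])$ in the complementary case $j > 0$ and $(\omega_p^{-j}\chi)_{\mid_{I_p}} = 1 \ (\text{mod } \pi)$. Substituting gives exactly the two displayed formulas.

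No real obstacle remains, since the substantive work lies in the three cited results rather than in their assembly. The only point worth checking explicitly is that Proposition \ref{ipandip'} genuinely presents $H^1_\mathrm{Gr}(\Q_p, A_{f_i\chi(-j)}[\pi^r])$ as a kernel of restriction into the stated $A''$-cohomology group --- which is precisely what the vanishing of $\mathrm{Ker}(i_r^{''*})$ in Case $1$ and the injectivity argument through $I_p/I'_p$ in Case $2$ of that proof establish --- so that the reduction principle applies verbatim and the induced map on the $p$-local quotient is injective. Granting this, the two displayed descriptions of the Selmer group follow at once.
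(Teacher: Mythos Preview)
Your proposal is correct and follows exactly the route the paper intends: the corollary is stated as an immediate consequence of Lemma~\ref{sel5971}, Lemma~\ref{sel7892} and Proposition~\ref{ipandip'}, and you have simply spelled out the elementary assembly. The only content is your ``reduction principle'' that replacing a local quotient by the target of an injective map does not change the global kernel, which is precisely the trivial observation the paper leaves implicit.
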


\medskip

\begin{lemma}\label{phi''copy}
 Recall $\phi : A_{f_1}[\pi^r] \lra A_{f_2}[\pi^r]$  is the given $G_\Q$ linear isomorphism in Theorem  \ref{mainthmgr}. Let $k$ be such that  $ \omega_p^{k-1}\epsilon_{i,p} \neq 1 ~ (\text{mod }\pi)$. Then $\phi|_{G_{p}}$ induces  an isomorphism $A''_{f_1}[\pi^{r}] \cong A''_{f_2}[\pi^{r}]$. Consequently, for all quadratic character $\chi$ and all $0 \leq j \leq k-2$, $\phi|_{G_p}$ induces isomorphisms $A''_{{f_1} \chi{(-j)}}[\pi^{r}] \cong A''_{{f_2} \chi {(-j)}}[\pi^{r}]$.
\end{lemma}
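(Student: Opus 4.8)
We are given a $G_\Q$-linear isomorphism $\phi: A_{f_1}[\pi^r] \to A_{f_2}[\pi^r]$ and the hypothesis $\omega_p^{k-1}\epsilon_{i,p} \neq 1 \pmod\pi$ for $i=1,2$. We want to show that $\phi$ restricted to $G_p$ respects the ordinary filtrations, i.e., carries $A'_{f_1}[\pi^r]$ into $A'_{f_2}[\pi^r]$, hence induces an isomorphism on the unramified quotients $A''_{f_i}[\pi^r]$.

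Let me think about the structure here. Each $f_i$ is $p$-ordinary, so $A_{f_i}$ sits in an exact sequence of $G_p$-modules
$$0 \to A'_{f_i} \to A_{f_i} \to A''_{f_i} \to 0,$$
where $(A'_{f_i})^\vee$ and $(A''_{f_i})^\vee$ are free $O$-modules of rank 1, $G_p$ acts on $A''_{f_i}$ by the unramified character (dual to $\lambda_{f_i}$), and on $A'_{f_i}$ by $\lambda_{f_i}^{-1}\epsilon_{i,p}\omega_p^{k-1}$-type character (the "non-unit" line). Taking $\pi^r$-torsion, we get a filtration on $A_{f_i}[\pi^r]$ with submodule $A'_{f_i}[\pi^r]$ and quotient $A''_{f_i}[\pi^r]$.

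**The characters mod $\pi$.** The key point is to look at the action of the inertia subgroup $I_p \subset G_p$ on the residual pieces. On $A''_{f_i}[\pi]$, inertia acts trivially (unramified). On $A'_{f_i}[\pi]$, inertia acts via $\epsilon_{i,p}\omega_p^{k-1} \pmod\pi$ (the tame/wild part of the determinant character restricted to the "wrong" line, since the unramified part is on the quotient). The hypothesis $\omega_p^{k-1}\epsilon_{i,p} \neq 1 \pmod\pi$ says precisely that inertia acts nontrivially on $A'_{f_i}[\pi]$.

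**My plan.** I would argue as follows. The submodule $A'_{f_i}[\pi]$ is exactly the part of $A_{f_i}[\pi]$ on which $I_p$ acts nontrivially, and $A''_{f_i}[\pi]$ is the part on which $I_p$ acts trivially. More precisely, I'd claim that $A''_{f_i}[\pi] = (A_{f_i}[\pi])^{I_p}$, the inertia invariants, \emph{provided} the two residual characters $\omega_p^{k-1}\epsilon_{i,p}$ and the trivial character are distinct — which is exactly the hypothesis. Since $\phi$ is $G_\Q$-linear, in particular $I_p$-equivariant, it must carry inertia-invariants to inertia-invariants: $\phi((A_{f_1}[\pi])^{I_p}) = (A_{f_2}[\pi])^{I_p}$. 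This gives $\phi(A''_{f_1}[\pi]) = A''_{f_2}[\pi]$ at the residual level. The harder part is to lift this from the $\pi$-torsion to the full $\pi^r$-torsion. For this I would instead characterize $A'_{f_i}[\pi^r]$ as the $O/\pi^r$-submodule generated by all elements on which $I_p$ acts through a character that is nontrivial mod $\pi$; equivalently, I would show $A'_{f_i}[\pi^r]$ is the set of $x \in A_{f_i}[\pi^r]$ such that $(\sigma - 1)x \notin \pi A_{f_i}[\pi^r]$ for suitable $\sigma \in I_p$, together with $\pi$-divisibility arguments. A cleaner route: since $I_p$ acts on $A''_{f_i}[\pi^r]$ trivially and on $A'_{f_i}[\pi^r]$ via a character nontrivial mod $\pi$, the submodule $A'_{f_i}[\pi^r]$ equals the $O/\pi^r$-span of $\{(\sigma-1)x : \sigma \in I_p,\ x \in A_{f_i}[\pi^r]\}$, which is an intrinsic, $G_\Q$-stable (indeed $I_p$-stable) subobject preserved by any $I_p$-equivariant isomorphism. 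Then $\phi(A'_{f_1}[\pi^r]) = A'_{f_2}[\pi^r]$, and passing to quotients yields the desired isomorphism $A''_{f_1}[\pi^r] \cong A''_{f_2}[\pi^r]$.

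**The twist and the main obstacle.** For the final sentence, once the untwisted isomorphism $A''_{f_1}[\pi^r] \cong A''_{f_2}[\pi^r]$ is established, twisting by $\chi\omega_p^{-j}$ is formal: the filtration on $A_{f_i\chi(-j)}$ is obtained from that on $A_{f_i}$ by tensoring with the (rank-one, $G_\Q$-stable) module $O/\pi^r(\chi\omega_p^{-j})$, so $A''_{f_i\chi(-j)}[\pi^r] \cong A''_{f_i}[\pi^r] \otimes (\chi\omega_p^{-j})$, and $\phi \otimes \mathrm{id}$ gives the isomorphism. The main obstacle is the lifting step from $\pi$ to $\pi^r$: one must be sure that the nonvanishing of $\omega_p^{k-1}\epsilon_{i,p}$ only \emph{mod $\pi$} already forces the two inertia characters on $A'_{f_i}[\pi^r]$ and $A''_{f_i}[\pi^r]$ to remain "disjoint" at every level, so that the internal direct-sum-like decomposition into an inertia-nontrivial submodule and an inertia-trivial quotient is canonical and $\phi$-stable. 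I expect this to follow because any $I_p$-homomorphism $A''_{f_1}[\pi^r] \to A''_{f_2}[\pi^r]$ factoring through $A'$ would force a nonzero $I_p$-map between a trivial module and one whose reduction mod $\pi$ carries the nontrivial character $\omega_p^{k-1}\epsilon_{i,p}$, which is impossible; making this precise via Nakayama or a filtration-by-$\pi$ argument is the crux.
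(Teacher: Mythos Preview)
Your ``cleaner route'' is correct and is exactly the paper's argument: the paper identifies $A''_{f_i}[\pi^r]$ with the $I_p$-coinvariants $H_0(I_p, A_{f_i}[\pi^r])$ (your span of $(\sigma-1)x$ is precisely the kernel of the quotient map to coinvariants), proves $H_0(I_p, A'_{f_i}[\pi^r]) = 0$ by induction on $r$ via the short exact sequence $0 \to A'_{f_i}[\pi] \to A'_{f_i}[\pi^r] \to A'_{f_i}[\pi^{r-1}] \to 0$ and right-exactness of $H_0$ (this is your ``filtration-by-$\pi$'' idea made precise), and then concludes by functoriality of $H_0$ under the $G_\Q$-equivariant $\phi$. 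The twist by $\chi\omega_p^{-j}$ is handled exactly as you say.

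One caution on your first attempt: writing $A''_{f_i}[\pi] = (A_{f_i}[\pi])^{I_p}$ is not well-posed, since $A''_{f_i}[\pi]$ is a \emph{quotient} of $A_{f_i}[\pi]$, not a submodule; and even interpreting it as asking whether $(A_{f_i}[\pi])^{I_p} \to A''_{f_i}[\pi]$ is an isomorphism, this can fail when the residual extension $0 \to A'_{f_i}[\pi] \to A_{f_i}[\pi] \to A''_{f_i}[\pi] \to 0$ is nonsplit over $I_p$ (the obstruction lies in $H^1(I_p, A'_{f_i}[\pi])$, which need not vanish). Coinvariants is the right functor here, and you correctly switch to it.
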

\proof Let $i\in \{1,2\}.$ First we claim that 
 \begin{equation}\label{jdgqjdhgkjahdaud}
H_0(I_p,A'_{f_i}[\pi^r]) =0.
\end{equation}   This is proved by induction on $r$. The case $r=1$ is proved  as follows:  the action of $I_p$  on the one dimensional $O_f/\pi$ vector space $A'_{f_i}[\pi]$ is via $\omega_p^{k-1}\epsilon_{i,p}$ (mod $\pi$) and thus by given condition  on $k$ in the hypothesis, this action is non-trivial.  Hence $H_0(I_p, A'_{f_i}[\pi])=0 $. Then we apply induction using the short exact sequence $0 \rightarrow A'_{f_i}[\pi] \lra A'_{{f_i}} [\pi^{r}] \lra A'_{{f_i}}[\pi^{r-1}] \rightarrow 0$ to establish the claim for a general $r$. 

Next we consider the exact sequence $0 \rightarrow A^{'}_{f_i}[\pi^r] \lra A_{f_i}[\pi^r] \lra A^{''}_{f_i}[\pi^r] \rightarrow 0.$ Then using equation \eqref{jdgqjdhgkjahdaud}, we deduce
\begin{equation}\label{shcsqvcjhwqgfjwg}
H_0(I_p,  A^{''}_{f_i}[\pi^r]) \cong H_0(I_p, A_{f_i}[\pi^r]).
 \end{equation} 

Notice that as $I_p$ acts on $A_{f_i}^{''}$ trivially,  we have the identification 
\begin{equation}\label{wbdxwhdwj}
A^{''}_{f_i}[\pi^r]= H_0(I_p, A^{''}_{f_i}[\pi^r]).
\end{equation}
Using  \eqref{shcsqvcjhwqgfjwg} and \eqref{wbdxwhdwj}, we finally get the required $G_p$ linear isomorphism
$$A^{''}_{f_1}[\pi^r]  \cong H_0(I_p, A_{f_1}[\pi^r])  \cong H_0(I_p,A_{f_2}[\pi^r]) \cong A^{''}_{f_2}[\pi^r]. \qed$$

\medskip

\noindent {\it Proof of Theorem \ref{mainthmgr}:} It is now plain from Corollary \ref{6897245} and   Lemma  \ref{phi''copy}. \qed

\medskip

\section{Bloch-Kato Selmer Companion forms}\label{secbk}
In this section we study Bloch-Kato Selmer companion forms and establish our main result (Theorem \ref{mainthmgr2}). We shall begin by  comparing Greenberg and Bloch-Kato Selmer group. Recall from Theorem \ref{rhof}(2), $\lambda_{f_i}$ is the unramified character of $G_p$ with $\lambda_{f_i}(\mathrm{Frob}_p) =\alpha_p(f_i)$.

\medskip

\begin{proposition}\label{blochkatogreenbergcomparison}
\begin{enumerate} Let $i \in \{1,2\}$. We  assume all of the following hypotheses. 
\item Let $f_i$ be $p$-ordinary.
 \item The tame level $N$ of $f_i$ is square-free and $\forall \ell \in S$,  $ \text{cond}_\ell(\bar{\rho}_{f_i})=\ell$.
\item The condition $(\mathrm{C'_{i,\ell}})$, defined in equation \eqref{conduction-condition126}, is satisfied.
\item $H^1_\mathrm{Gr}(\Q_p, A_{f_i\chi(-j)})$ is $\pi$-divisible.
\item $\lambda_{f_i} \neq \pm 1$ and $\lambda_{f_i} \neq \pm\epsilon'_i$. Then,
\end{enumerate}
\begin{center}{$S_\mathrm{Gr}(A_{f_i\chi(-j)}[\pi^r]/\Q) \cong S_\mathrm{BK}(A_{f_i\chi(-j)}[\pi^r]/\Q).$}\end{center}
\end{proposition}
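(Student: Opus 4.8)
The plan is to prove the stronger statement that the two Selmer groups coincide as subgroups of $H^1(G_\Sigma(\Q), A_{f_i\chi(-j)}[\pi^r])$, by matching the defining local conditions at every place $v \in \Sigma$. Since both the $\pi^r$-Greenberg and the $\pi^r$-Bloch-Kato local conditions are defined by pulling back the corresponding conditions on the divisible module $A_{f_i\chi(-j)}$ along the map $i_r^*$ of \eqref{ifr}, it suffices to establish $H^1_\mathrm{BK}(\Q_v, A_{f_i\chi(-j)}) = H^1_\mathrm{Gr}(\Q_v, A_{f_i\chi(-j)})$ for each $v$; pulling back then gives equality of the $\pi^r$-level conditions and hence of the Selmer groups, which in particular yields the asserted isomorphism.

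For $v = q \neq p$, I would argue that both conditions on $A_{f_i\chi(-j)}$ reduce to the unramified subgroup $\ker\big(H^1(\Q_q, A_{f_i\chi(-j)}) \to H^1(I_q, A_{f_i\chi(-j)})\big)$. For the Greenberg side this is the $A$-level analogue of Lemma \ref{sel5971}. For the Bloch-Kato side one checks that $p^*_{f_i\chi(-j)}\big(H^1_\mathrm{unr}(\Q_q, V_{f_i\chi(-j)})\big)$ fills up the whole unramified subgroup of $H^1(\Q_q, A_{f_i\chi(-j)})$; the obstruction to this is measured by $A^{I_q}_{f_i\chi(-j)}$, which is divisible by Corollary \ref{cor3467}. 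Thus the conditions agree away from $p$, using hypotheses (2) and (3) through Corollary \ref{cor3467}.

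The crux is the place $p$. Here I would invoke the $p$-ordinary filtration $0 \to V'_{f_i} \to V_{f_i} \to V''_{f_i} \to 0$ of Theorem \ref{rhof}(2), with $V''_{f_i}$ unramified, twisted by $\chi\omega_p^{-j}$. The first ingredient is the standard identification, for a $p$-ordinary (Panchishkin) representation, of the Bloch-Kato finite subspace with the ordinary subspace, namely $H^1_f(\Q_p, V_{f_i\chi(-j)}) = \mathrm{im}\big(H^1(\Q_p, V'_{f_i\chi(-j)}) \to H^1(\Q_p, V_{f_i\chi(-j)})\big)$, which matches the Greenberg kernel $\ker\big(H^1(\Q_p, V_{f_i\chi(-j)}) \to H^1(I_p, V''_{f_i\chi(-j)})\big)$. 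The critical range $0 \le j \le k-2$ is what pins the Hodge-Tate weights, giving $\dim D_{\mathrm{dR}}/\mathrm{Fil}^0 = 1$ and hence $\dim H^1_f = 1$, while hypothesis (5) is exactly the input that makes the dimension count rigorous: since $\chi$ is quadratic, $\chi(\mathrm{Frob}_p) = \pm 1$, so $\lambda_{f_i} \neq \pm 1$ and $\lambda_{f_i} \neq \pm\epsilon'_i$ force the relevant Frobenius-semisimple characters attached to $V''_{f_i\chi(-j)}$ and to its dual to be nontrivial on $G_p$, whence the local $H^0$ (and, by local duality, $H^2$) terms that could perturb the comparison vanish.

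Finally I would transfer this equality from $V_{f_i\chi(-j)}$ down to the discrete module $A_{f_i\chi(-j)}$ and identify it with the Greenberg kernel $\ker\big(H^1(\Q_p, A_{f_i\chi(-j)}) \to H^1(I_p, A''_{f_i\chi(-j)})\big)$. The image $p^*_{f_i\chi(-j)}\big(H^1_f(\Q_p, V_{f_i\chi(-j)})\big)$ is automatically divisible, and hypothesis (4), the $\pi$-divisibility of $H^1_\mathrm{Gr}(\Q_p, A_{f_i\chi(-j)})$, guarantees that this divisible image exhausts the entire Greenberg subgroup rather than merely its maximal divisible part; hypothesis (5) again ensures the auxiliary $H^0$ terms governing the kernel and cokernel of the comparison maps vanish. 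Combining the three places gives $H^1_\mathrm{BK} = H^1_\mathrm{Gr}$ on $A_{f_i\chi(-j)}$, hence on $A_{f_i\chi(-j)}[\pi^r]$, and therefore the desired identification of Selmer groups. I expect the genuine difficulty to lie in this last transfer step: controlling the gap between the divisible subgroup $p^*_{f_i\chi(-j)}\big(H^1_f(V_{f_i\chi(-j)})\big)$ and the a priori possibly non-divisible Greenberg subgroup of $H^1(\Q_p, A_{f_i\chi(-j)})$, which is precisely what hypotheses (4) and (5) are designed to neutralise.
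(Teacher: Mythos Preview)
Your overall strategy is correct and coincides with the paper's: reduce to matching local conditions at every $v\in\Sigma$, use Corollary~\ref{cor3467} (via hypotheses (2) and (3)) at $q\neq p$, and do the real work at $p$ using hypotheses (1), (4), (5). The away-from-$p$ argument is exactly as in the paper.

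At $p$ your route and the paper's diverge slightly in packaging, and your version has a soft spot. The paper does \emph{not} first establish ``$H^1_f(\Q_p,V)=\mathrm{im}\,H^1(\Q_p,V')$'' and then descend. Instead it quotes Ochiai to the effect that $p^*\big(H^1_g(\Q_p,V_{f_i\chi(-j)})\big)$ equals the maximal divisible subgroup of $H^1_\mathrm{Gr}(\Q_p,A_{f_i\chi(-j)})$, and then proves $H^1_f=H^1_g$ by showing $D_\mathrm{crys}\big(V_{f_i\chi(-j)}^*(1)\big)^{\phi=1}=0$; hypothesis (5) is used precisely here, by checking separately on $V'^{*}(1)$ and $V''^{*}(1)$ that the crystalline Frobenius eigenvalue is never $1$ (the conditions $\lambda_{f_i}\neq\pm1$ and $\lambda_{f_i}\neq\pm\epsilon'_i$ are exactly what kill these pieces for all quadratic $\chi$). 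Then hypothesis (4) upgrades ``maximal divisible subgroup'' to the whole of $H^1_\mathrm{Gr}$.

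In your write-up two points deserve tightening. First, the claimed equality $\mathrm{im}\big(H^1(\Q_p,V')\to H^1(\Q_p,V)\big)=\ker\big(H^1(\Q_p,V)\to H^1(I_p,V'')\big)$ is not automatic: the gap is $H^1(G_p/I_p,(V'')^{I_p})$, and its vanishing is not guaranteed by the proposition's hypotheses alone (no assumption like $p>2k-3$ is made here). Second, in the transfer step you argue that $p^*(H^1_f)$ is divisible and $H^1_\mathrm{Gr}(\Q_p,A)$ is divisible, hence equal; but two nested divisible groups need not coincide. What closes this is exactly the Ochiai input the paper uses, namely that $p^*(H^1_g)$ already fills the maximal divisible part of $H^1_\mathrm{Gr}$. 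Once you route through $H^1_g$ and the $D_\mathrm{crys}^{\phi=1}$ computation, your argument becomes the paper's.
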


\proof It suffices to show, $H^1_\mathrm{BK}(\Q_q,  A_{{f_i}\chi(-j)}[\pi^r]) = H_\mathrm{Gr}^1(\Q_q, A_{{f_i}\chi(-j)}[\pi^r])$ for every $q \in \Sigma$.

\medskip

 By assumptions (1) and (2), it follows from Corollary \ref{cor3467} that  \begin{small}{$A_{{f_i}\chi(-j)}^{I_q}$}\end{small} is divisible for every  \begin{small}{$q \in \Sigma \setminus \{p\}$}\end{small}. Thus for such a $q$,  \begin{small}{$H_\mathrm{Gr}^1(\Q_q, A_{{f_i}\chi(-j)})=  H^1(G_q/{I_q}, A_{{f_i}\chi(-j)}^{I_q})$}\end{small} is divisible. Note that  \begin{small}{$H^1_\mathrm{BK}(\Q_q,  A_{{f_i}\chi(-j)})$}\end{small} is the maximal divisible subgroup of  \begin{small}{$H^1(G_q/{I_q}, A_{{f_i}\chi(-j)}^{I_q})$}\end{small} (see for example \cite[Proposition 4.2]{oc}). Hence we deduce for  every  \begin{small}{$q \in \Sigma \setminus \{p\}, H^1_\mathrm{BK}(\Q_q,  A_{{f_i}\chi(-j)}) $ \\ = $H_\mathrm{Gr}^1(\Q_q, A_{{f_i}\chi(-j)})$}\end{small} and consequently  \begin{small}{$H^1_\mathrm{BK}(\Q_q,  A_{{f_i}\chi(-j)}[\pi^r]) = H_\mathrm{Gr}^1(\Q_q, A_{{f_i}\chi(-j)}[\pi^r])$}\end{small} follows. 

\medskip

Next we consider the case of prime $p$. Note that in this case, the image of $H^1_g(\Q_p, V_{{f_i}\chi(-j)})$ in $H^1(\Q_p, A_{{f_i}\chi(-j)})$ is the maximum divisible subgroup of $H^1_\mathrm{Gr}(\Q_p, A_{{f_i}\chi(-j)})$ \cite[Proof of Proposition 4.2]{oc}. We first claim, $H^1_\mathrm{BK}(\Q_p, V_{{f_i}\chi(-j)}) = H^1_g(\Q_p, V_{{f_i}\chi(-j)})$. Assume this claim to be true at the moment.  Then from the assumption (3), we get that $H^1_\mathrm{BK}(\Q_p, A_{{f_i}\chi(-j)}) = H^1_\mathrm{Gr}(\Q_p, A_{{f_i}\chi(-j)})$. This in turn implies that 
$H^1_\mathrm{BK}(\Q_p,  A_{{f_i}\chi(-j)}[\pi^r])$ = $H_\mathrm{Gr}^1(\Q_p, A_{{f_i}\chi(-j)}[\pi^r])$.

The proposition follows from the above discussion once we establish the claim.  We calculate, $\text{dim}_{\Q_p} H^1_\mathrm{BK}(\Q_p, V_{{f_i}\chi(-j)}) - \text{dim}_{\Q_p}  H^1_g(\Q_p, V_{{f_i}\chi(-j)})$. By \cite[2.3.5, Page 35]{be},
$$
\text{dim}_{\Q_p} \frac{H_g^1(\Q_p, V_{{f_i}\chi(-j)})}{H_\mathrm{BK}^1(\Q_p, V_{{f_i}\chi(-j)} )} = \text{dim}_{\Q_p}  D_\text{crys}(V_{{f_i}\chi(-j)}^*(1))^{\phi=1}
$$
where $V^* = \text{Hom}_{\Q_p} (V, \Q_p)$

We shall first show  $D_\text{crys}({V''}_{{f_i}\chi(-j)}^*(1))^{\phi=1}  =0$. If $\chi$ is ramified then it follows from \begin{small}{(\cite[ \S7.2.4, \S7.2.5, Prop. 7.20]{fo})}\end{small} that ${V''}_{{f_i}\chi(-j)}^*(1)$ is not crystalline. Therefore  $ D_\text{crys}({V''}_{{f_i}\chi(-j)}^*(1))\\ = 0 $. Now suppose that $\chi$ is unramified. In this case ${V''}_{{f_i}\chi(-j)}^*(1)$ is crystalline. Then using  \cite[\S 4.2.3]{sc}) and the assumption $\lambda_{f_i} \neq \pm 1$ implies that $D_\text{crys}({V''}_{{f_i}\chi(-j)}^*(1))^{\phi=1} =0 $.

\medskip

Next, we show $D_\text{crys}({V'}_{{f_i}\chi(-j)}^*(1))^{\phi=1}=0$. The Galois group $G_p$ acts on ${V'}_{{f_i}\chi(-j)}^*(1)$ via $\lambda_{f_i}^{-1}\epsilon_i' \epsilon_{i,p} \chi \omega_p^{k-2-j}$. Now if $\epsilon_{i,p}\chi$ is ramified, then ${V'}_{{f_i}\chi(-j)}^*(1)$ is not crystalline and therefore  $D_\text{crys}({V'}_{{f_i}\chi(-j)}^*(1))=0$. 

\medskip

On the other hand, assume that $\epsilon_{i,p}\chi$ is unramified.  Moreover if $\chi$ is also unramified, we necessarily have $\epsilon_{i,p}=1$. 
Further  the assumption that $\lambda_{f_i} \neq \pm \epsilon_i'$ implies that 
$\lambda_{f_i}^{-1}\epsilon'_i  \chi \neq 1$. Then  from  \cite[\S 4.2.3]{sc}  
we have $D_\text{crys}({V''}_{{f_i}\chi(-j)}^*(1))^{\phi=1} =0 $. Finally,  if $\chi$ is ramified then we get $(\epsilon_{i,p}\chi)_{\mid_{I_p}}=1$. This shows that $\epsilon_{i,p}$ is a quadratic character. Therefore $\epsilon_{i,p}\chi$ is an unramified quadratic character. 
Now again from  \cite[\S 4.2.3]{sc})  and the assumption $\lambda_{f_i} \neq \pm \epsilon'_i$  we have that $D_\text{crys}({V''}_{{f_i}\chi(-j)}^*(1))^{\phi=1} =0 $. This completes the proof of the proposition. \qed

\medskip

\begin{rem}\label{miyakerem}
We note that the condition (4) in Proposition \ref{blochkatogreenbergcomparison} is satisfied if either weight $k >2$  or  if $p$ is a good prime for $f_i$  i.e. ${t_1}=t_2=0$    holds \cite[Theorem 4.6.17(3)]{mi}.

\end{rem}

\begin{lemma}\label{lemma2.13}
Let $f_i$ be $p$-ordinary. If  $\omega_p^{k-2-j} \epsilon_i \chi \neq\lambda_{f_i} $ (mod $\pi$) then $H^2(\Q_p,A'_{f_i\chi(-j))}[\pi])=0$ and in particular this implies that $H^1(\Q_p,A'_{f_i\chi(-j)})$ is $\pi$-divisible.
\end{lemma}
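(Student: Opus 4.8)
The plan is to compute the local Galois cohomology at $p$ of the one-dimensional residual module $A'_{f_i\chi(-j)}[\pi]$ directly, by identifying the character through which $G_p$ acts and then invoking local Tate duality together with the local Euler characteristic formula. First I would recall from Theorem \ref{rhof}(2) that, since $f_i$ is $p$-ordinary, the sub-quotient $A'_{f_i}$ is the one on which $G_p$ acts via $\lambda_{f_i}^{-1}\epsilon_i\omega_p^{k-1}$; after the twist by $\chi\omega_p^{-j}$ the module $A'_{f_i\chi(-j)}[\pi]$ is a one-dimensional $O/\pi$-vector space on which $G_p$ acts via the reduction of $\lambda_{f_i}^{-1}\epsilon_i\chi\omega_p^{k-1-j}$.

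The key observation is that $H^2(\Q_p, A'_{f_i\chi(-j)}[\pi])$ is Pontryagin-dual, via local Tate duality, to $H^0(\Q_p, (A'_{f_i\chi(-j)}[\pi])^{\vee}(1))$, i.e.\ to the $G_p$-invariants of the Tate-twisted dual. The dual-twist character is the reduction mod $\pi$ of $\lambda_{f_i}\epsilon_i^{-1}\chi^{-1}\omega_p^{j+2-k}$ (here I use $\chi^2=1$ so $\chi^{-1}=\chi$). Thus $H^2(\Q_p, A'_{f_i\chi(-j)}[\pi])$ is nonzero precisely when this dual character is trivial mod $\pi$, which is equivalent to $\lambda_{f_i}\equiv \omega_p^{k-2-j}\epsilon_i\chi \pmod{\pi}$ as characters of $G_p$. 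The hypothesis $\omega_p^{k-2-j}\epsilon_i\chi\neq\lambda_{f_i}\pmod\pi$ therefore forces the $G_p$-invariants to vanish, so $H^2(\Q_p, A'_{f_i\chi(-j)}[\pi])=0$.

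To pass from this $H^2$-vanishing to $\pi$-divisibility of $H^1(\Q_p, A'_{f_i\chi(-j)})$, I would use the long exact cohomology sequence attached to $0\to A'_{f_i\chi(-j)}[\pi]\to A'_{f_i\chi(-j)}\stackrel{\pi}{\to}A'_{f_i\chi(-j)}\to 0$, whose relevant segment reads $H^1(\Q_p, A'_{f_i\chi(-j)})\stackrel{\pi}{\to}H^1(\Q_p, A'_{f_i\chi(-j)})\to H^2(\Q_p, A'_{f_i\chi(-j)}[\pi])$. The cokernel of multiplication by $\pi$ on $H^1$ injects into $H^2(\Q_p, A'_{f_i\chi(-j)}[\pi])$, so once the latter vanishes, multiplication by $\pi$ is surjective on $H^1(\Q_p, A'_{f_i\chi(-j)})$, which is exactly the assertion that $H^1(\Q_p, A'_{f_i\chi(-j)})$ is $\pi$-divisible.

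The main obstacle, and the step needing the most care, is the precise bookkeeping of the character through which $G_p$ acts on $A'_{f_i\chi(-j)}$ and on its Tate-twisted Pontryagin dual, since the determinant normalization $\det\rho_{f_i}=\epsilon_i\omega_p^{k-1}$, the ordinary filtration, and the various twists by $\chi\omega_p^{-j}$ all contribute exponents that must be tracked consistently to land on the clean congruence $\omega_p^{k-2-j}\epsilon_i\chi=\lambda_{f_i}$ appearing in the hypothesis. A secondary point is to ensure the duality identification is applied to the residual module $A'_{f_i\chi(-j)}[\pi]$ itself (a finite $G_p$-module over $O/\pi$), so that local Tate duality applies verbatim and the $H^2$ is computed as the dual of an $H^0$.
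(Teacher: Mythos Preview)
Your proposal is correct and follows essentially the same route as the paper: local Tate duality reduces the vanishing of $H^2(\Q_p,A'_{f_i\chi(-j)}[\pi])$ to the nontriviality of the character on the Tate-twisted dual, which is exactly the hypothesis $\omega_p^{k-2-j}\epsilon_i\chi\neq\lambda_{f_i}\pmod\pi$, and then the long exact sequence for multiplication by $\pi$ gives $\pi$-divisibility of $H^1$. The only cosmetic difference is that the paper rephrases the dual module as the Pontryagin dual of $A'_{f_i\chi(-j-1)}[\pi]$ before reading off the character, whereas you compute the dual character directly; also, you mention the local Euler characteristic formula in your plan but (correctly) never need it.
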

\begin{proof}
We write $\bar{\omega}_p : = \omega_p ~(\text{mod}~ \pi)$. By Tate duality, the Pontryagin dual of $H^2(\Q_p,A'_{f_i\chi(-j)}[\pi])$ is equal to $H^0(\Q_p,(A'_{f_i\chi(-j)}[\pi])^*(1))$, where $A'_{f_i\chi(-j)}[\pi])^*(1)$ is defined as $\mathrm{Hom}(A'_{f_i\chi(-j)}[\pi],\bar{\omega}_p)$. Thus it suffices to show $H^0(\Q_p,(A'_{f_i\chi(-j)}[\pi])^*(1))=0$ or equivalently $G_p$ acts non-trivially on $\mathrm{Hom}(A'_{f_i\chi(-j)}[\pi],\bar{\omega}_p)$. On the other hand, $\mathrm{Hom}(A'_{f_i\chi(-j)}[\pi],\bar{\omega}_p)$ is equal to the Pontryagin dual of $A'_{f_i\chi(-j-1)}[\pi]$. 
Thus $G_p$ acts non-trivially on $\mathrm{Hom}(A'_{f_i\chi(-j)}[\pi],\bar{\omega}_p)$  if and only if  $G_{p}$ acts non-trivially on $A'_{f_i\chi(-j-1)}[\pi]$. This action of  $G_{p}$   is via $\omega_p^{k-2-j} \epsilon_i \chi\lambda_{f_i}^{-1}$ (mod $\pi$). Thus using the given hypothesis, the first  of the lemma follows. 

\medskip

For the second part, there is an exact sequence via Kummer theory
\[  H^1(\Q_p,A'_{f_i\chi(-j)}) \lra H^1(\Q_p,A'_{f_i\chi(-j)}) \lra H^2(\Q_p,A'_{f_i\chi(-j))}[\pi]) \]
where the first map is given  by  multiplication by $\pi $. Hence if $H^2(\Q_p,A'_{f_i\chi(-j))}[\pi])=0$ then  $H^1(\Q_p,A'_{f_i\chi(-j)})$ is   $\pi$-divisible.
\end{proof}

\begin{lemma}\label{divpncase}
Let $i=1,2$ and $f_i$ be $p$-ordinary. Assume the following conditions. 
\begin{enumerate} 

\item $H^2(\Q_p, A'_{f_i\chi(-j)}[\pi]) =0$.
\item $A''^{I_p}_{f_i\chi(-j)}$ is $\pi$-divisible.
\end{enumerate} 
Then  $H^1_\mathrm{Gr}(\Q_p, A_{f_i\chi(-j)})$ is $\pi$-divisible.
\end{lemma}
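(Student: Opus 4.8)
The plan is to prove that $H^1_\mathrm{Gr}(\Q_p, A_{f_i\chi(-j)})$ is $\pi$-divisible by exploiting the defining filtration $0 \to A'_{f_i\chi(-j)} \to A_{f_i\chi(-j)} \to A''_{f_i\chi(-j)} \to 0$ together with the two hypotheses. Recall that by definition $H^1_\mathrm{Gr}(\Q_p, A_{f_i\chi(-j)}) = \mathrm{Ker}\big(H^1(\Q_p, A_{f_i\chi(-j)}) \to H^1(I_p, A''_{f_i\chi(-j)})\big)$. First I would identify this Greenberg local condition with (the image of) the cohomology of the submodule $A'_{f_i\chi(-j)}$, so that checking $\pi$-divisibility of the Greenberg condition reduces to checking $\pi$-divisibility of a group built from $H^1(\Q_p, A'_{f_i\chi(-j)})$.

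Concretely, I would write down the long exact sequence in $G_p = G_{\Q_p}$ cohomology attached to the short exact sequence of the filtration, namely
\[
H^0(\Q_p, A''_{f_i\chi(-j)}) \lra H^1(\Q_p, A'_{f_i\chi(-j)}) \lra H^1(\Q_p, A_{f_i\chi(-j)}) \lra H^1(\Q_p, A''_{f_i\chi(-j)}).
\]
Since $I_p$ acts trivially on $A''_{f_i\chi(-j)}$, the unramified quotient behaves well, and one shows that a class lands in the Greenberg condition precisely when its image in $H^1(\Q_p, A''_{f_i\chi(-j)})$ is unramified. The key step is to argue that $H^1_\mathrm{Gr}(\Q_p, A_{f_i\chi(-j)})$ receives a surjection from $H^1(\Q_p, A'_{f_i\chi(-j)})$ (using hypothesis (2), the $\pi$-divisibility of $A''^{I_p}_{f_i\chi(-j)}$, to control the unramified part of the $A''$-cohomology via the inflation-restriction sequence for $I_p \triangleleft G_p$ and the pro-cyclic quotient $G_p/I_p$), up to a contribution that is itself divisible or vanishes. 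Then the second part of Lemma \ref{lemma2.13} enters: hypothesis (1), $H^2(\Q_p, A'_{f_i\chi(-j)}[\pi]) = 0$, is exactly the condition shown there to force $H^1(\Q_p, A'_{f_i\chi(-j)})$ to be $\pi$-divisible.

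Combining these, the plan is: the surjection from a $\pi$-divisible group $H^1(\Q_p, A'_{f_i\chi(-j)})$ onto (the relevant part of) $H^1_\mathrm{Gr}(\Q_p, A_{f_i\chi(-j)})$ transports $\pi$-divisibility, since quotients and images of $\pi$-divisible modules are $\pi$-divisible. The divisibility of $A''^{I_p}_{f_i\chi(-j)}$ is what guarantees that the $A''$-contribution to the Greenberg condition is the full unramified subgroup $H^1(G_p/I_p, A''^{I_p}_{f_i\chi(-j)})$, which is itself divisible and does not obstruct divisibility of the kernel.

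The main obstacle I expect is the bookkeeping in the long exact sequence: one must check carefully that the image of $H^1(\Q_p, A'_{f_i\chi(-j)})$ in $H^1(\Q_p, A_{f_i\chi(-j)})$ really coincides with (or is cofinal in) the Greenberg subgroup, rather than merely being contained in it. This requires comparing the local condition ``unramified along $A''$'' with ``coming from $A'$,'' and handling the connecting map $H^0(\Q_p, A''_{f_i\chi(-j)}) \to H^1(\Q_p, A'_{f_i\chi(-j)})$ together with the unramified cohomology of $A''$. Hypotheses (1) and (2) are precisely tailored to kill the two potential sources of non-divisibility (a non-divisible $H^1$ of the sub, and a non-divisible invariants/unramified term of the quotient), so once the diagram chase is set up correctly the divisibility conclusion should follow without further input.
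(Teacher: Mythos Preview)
Your approach is essentially the paper's: factor the defining map through the composite $H^1(\Q_p, A_{f_i\chi(-j)}) \stackrel{f}{\to} H^1(\Q_p, A''_{f_i\chi(-j)}) \stackrel{g}{\to} H^1(I_p, A''_{f_i\chi(-j)})$, so that $H^1_\mathrm{Gr} = \mathrm{Ker}(g\circ f)$; identify $\mathrm{Ker}(f) = \mathrm{Img}(\psi)$ as $\pi$-divisible via Lemma~\ref{lemma2.13} and hypothesis~(1), and $\mathrm{Ker}(g) = H^1(G_p/I_p, A''^{I_p}_{f_i\chi(-j)})$ as $\pi$-divisible via hypothesis~(2); then conclude from the exact sequence $0 \to \mathrm{Ker}(f) \to H^1_\mathrm{Gr} \to \mathrm{Ker}(g)$.

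There is, however, one point you underuse. You invoke hypothesis~(1) only to force $H^1(\Q_p, A'_{f_i\chi(-j)})$ to be $\pi$-divisible, but it is needed a second time: from $H^2(\Q_p, A'_{f_i\chi(-j)}[\pi]) = 0$ one deduces $H^2(\Q_p, A'_{f_i\chi(-j)}) = 0$, and this is what makes $f$ \emph{surjective}, so that $H^1_\mathrm{Gr}/\mathrm{Ker}(f) \to \mathrm{Ker}(g)$ is onto. Without surjectivity of $f$ you only know $H^1_\mathrm{Gr}/\mathrm{Ker}(f)$ embeds in the divisible module $\mathrm{Ker}(g)$, and a submodule of a divisible $O$-module need not be divisible. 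In particular, your phrasing ``a surjection from $H^1(\Q_p, A'_{f_i\chi(-j)})$ onto (the relevant part of) $H^1_\mathrm{Gr}$'' is not the right picture: the image of $H^1(\Q_p, A'_{f_i\chi(-j)})$ is exactly $\mathrm{Ker}(f)$, generally a proper subgroup of $H^1_\mathrm{Gr}$, and the quotient is all of $\mathrm{Ker}(g)$ only once $\mathrm{Coker}(f)=0$. With that correction in place, your plan matches the paper's proof.
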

\begin{proof}
Consider  the natural restriction map $g: H^1(\Q_p, A''_{f_i\chi(-j)}) \lra H^1(I_p, A''_{f_i\chi(-j)})$. Then $\text{Ker}(g) \cong H^1(G_p/I_p, A''^{I_p}_{f_i\chi(-j)}) \cong H^1(<\text{Frob}_p>, A''^{I_p}_{f_i\chi(-j)})$, is divisible by assumption (2).
On the other hand, the exact sequence $0 \lra A'_{f_i\chi(-j)} \lra A_{f_i\chi(-j)} \lra A^{''}_{f_i\chi(-j)} \lra 0$ induces 
the  natural maps on cohomology  $$  H^1(\Q_p, A'_{f_i\chi(-j)})  \stackrel{\psi}{\lra} H^1(\Q_p, A_{f_i\chi(-j)})  \stackrel{f}{\lra} H^1(\Q_p, A''_{f_i\chi(-j)})$$ Then $f$ is surjective by assumption (1).  Also  $\text{Ker}(f) \cong \text{Img}(\psi)$ is divisible by our assumption (1) together with  Lemma \ref{lemma2.13}. Now we consider the exact sequence 
\[ 0 \lra Ker(f) \lra Ker (g\circ f)= H^1_\mathrm{Gr}(\Q_p, A_{f_i\chi(-j)}) \lra Ker(g) \lra Coker(f)=0   \]
 The divisibility of $H^1_\mathrm{Gr}(\Q_p, A_{f_i\chi(-j)})$ follows from the divisibility of  $\text{Ker}(f)$ and $\text{Ker}(g)$.
\end{proof}
\begin{lemma}\label{hgfgrsfdjl,kjf}
 If either one of the following conditions hold
\begin{enumerate}
\item[(i)] $p >2k-3$ and $\omega_p^{k-2-j}\epsilon_{i,p}$ (mod $\pi$) is not a quadratic character.
\item[(ii)]  $p >2k-3$ and $a_p(f_i) \neq {\pm }\epsilon_i'(\text{Frob}_p)$ (mod $\pi$),
\end{enumerate}
then the assumptions of Lemma \ref{divpncase} hold and  $H^1_\mathrm{Gr}(\Q_p, A_{f_i\chi(-j)})$ is $\pi$-divisible.

In particular, if $t_1=t_2=0$, then   if $p> 2k -3 $ and either (i)  $j\neq k-2$ or  (ii) $a_p(f_i)\neq \pm \epsilon_i(\text{Frob}_p)$ (mod $\pi$) holds, then   $H^1_\mathrm{Gr}(\Q_p, A_{f_i\chi(-j)})$ is $\pi$-divisible.
\end{lemma}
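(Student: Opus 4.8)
The plan is to reduce everything to Lemma \ref{divpncase}: its conclusion is exactly the desired $\pi$-divisibility of $H^1_\mathrm{Gr}(\Q_p, A_{f_i\chi(-j)})$, so it suffices to verify its two hypotheses, namely (1) $H^2(\Q_p, A'_{f_i\chi(-j)}[\pi]) = 0$ and (2) the $\pi$-divisibility of $A''^{I_p}_{f_i\chi(-j)}$, under either (i) or (ii). Both (i) and (ii) assume $p > 2k-3$, and this alone yields hypothesis (2): since $0 \le j \le k-2$, Remark \ref{div} shows that for every such $j$ one is in case (i) or case (ii) of Corollary \ref{78914121881} (either $j=0$, or $j>0$ with $(\omega_p^{-j}\chi)_{|I_p} \neq 1$ (mod $\pi$)), and that corollary gives the $\pi$-divisibility of $A''^{I_p}_{f_i\chi(-j)}$. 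Note this step does not use the dichotomy (i)/(ii) at all.

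For hypothesis (1) I would appeal to Lemma \ref{lemma2.13}, which reduces $H^2(\Q_p, A'_{f_i\chi(-j)}[\pi]) = 0$ to the inequality of mod-$\pi$ characters $\omega_p^{k-2-j}\epsilon_i\chi \neq \lambda_{f_i}$ on $G_p$. Two facts organize the comparison: first, by Theorem \ref{rhof}(2) the character $\lambda_{f_i}$ is unramified with $\lambda_{f_i}(\mathrm{Frob}_p)=\alpha_p(f_i)$, and $p$-ordinarity forces $\beta_p(f_i)\equiv 0$, hence $\alpha_p(f_i)\equiv a_p(f_i)$ (mod $\pi$); second, $\epsilon_i = \epsilon_{i,p}\epsilon_i'$ with $\epsilon_i'$ unramified at $p$, so on inertia $(\omega_p^{k-2-j}\epsilon_i\chi)_{|I_p} = (\omega_p^{k-2-j}\epsilon_{i,p}\chi)_{|I_p}$. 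I then split according to whether this restriction to $I_p$ is trivial. In the ramified case $(\omega_p^{k-2-j}\epsilon_{i,p}\chi)_{|I_p}\neq 1$ (mod $\pi$) the character $\omega_p^{k-2-j}\epsilon_i\chi$ is ramified while $\lambda_{f_i}$ is unramified, so the inequality is automatic; condition (i) forces this case, because $\omega_p^{k-2-j}\epsilon_{i,p}$ having order $>2$ on $I_p$ (i.e. being non-quadratic) cannot be made trivial upon multiplication by the at-most-quadratic $\chi_{|I_p}$. In the unramified case both sides factor through $\langle\mathrm{Frob}_p\rangle$ and I would compare Frobenius values: the twisting factor $\omega_p^{k-2-j}\epsilon_{i,p}\chi$ should contribute only a sign (this is the delicate point, addressed below), giving $\omega_p^{k-2-j}\epsilon_i\chi(\mathrm{Frob}_p)=\pm\epsilon_i'(\mathrm{Frob}_p)$, so that condition (ii), $a_p(f_i)\neq\pm\epsilon_i'(\mathrm{Frob}_p)$ (mod $\pi$), precisely rules out equality with $\lambda_{f_i}(\mathrm{Frob}_p)=\alpha_p(f_i)\equiv a_p(f_i)$.

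For the \emph{in particular} assertion I would specialize to $t_1=t_2=0$, so that $\epsilon_{i,p}=1$ and $\epsilon_i=\epsilon_i'$. Then $p>2k-3$ gives $0\le k-2-j < (p-1)/2$, whence $\omega_p^{k-2-j}$ is non-quadratic on $I_p$ exactly when $k-2-j\neq 0$, i.e. when $j\neq k-2$; this is the specialization of (i). When instead $j=k-2$, the unramified case is forced ($\omega_p^0=1$ must match $\chi_{|I_p}$, so $\chi$ is unramified and the twisting factor is literally $\chi(\mathrm{Frob}_p)=\pm 1$), and (ii) becomes $a_p(f_i)\neq\pm\epsilon_i(\mathrm{Frob}_p)$ (mod $\pi$), as stated. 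The step I expect to be the main obstacle is precisely the unramified-case Frobenius computation: one must confirm that the composite twisting character $\omega_p^{k-2-j}\epsilon_{i,p}\chi$, which is unramified in that case, indeed evaluates to $\pm 1$ at $\mathrm{Frob}_p$ modulo $\pi$, so that the problem collapses to the clean numerical condition (ii). This is immediate in the good-reduction setting where the unramified case forces $j=k-2$ and $\chi$ unramified, but in the general situation (bad $p$, nontrivial $\epsilon_{i,p}$) it requires careful tame-character bookkeeping together with the constraint $p>2k-3$, and is the only place where the two hypotheses of Lemma \ref{divpncase} are not both essentially formal.
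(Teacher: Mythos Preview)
Your proposal is correct and follows essentially the same route as the paper: reduce to Lemma~\ref{divpncase}, get hypothesis (2) from $p>2k-3$ via Corollary~\ref{78914121881} and Remark~\ref{div}, and get hypothesis (1) from Lemma~\ref{lemma2.13} by showing $\omega_p^{k-2-j}\epsilon_i\chi \neq \lambda_{f_i}$ (mod $\pi$). The one organizational difference is in case (ii): you split on whether $(\omega_p^{k-2-j}\epsilon_{i,p}\chi)_{|I_p}$ is trivial and then must argue that in the unramified subcase the twisting factor takes value $\pm 1$ at Frobenius (your ``delicate point''). The paper avoids this entirely: it treats (ii) only after assuming WLOG that (i) fails, so $\omega_p^{k-2-j}\epsilon_{i,p}$ (mod $\pi$) is quadratic, hence $\omega_p^{k-2-j}\epsilon_{i,p}\chi$ is quadratic as a product of quadratic characters, while condition (ii) says ${\epsilon_i'}^{-1}\lambda_{f_i}(\mathrm{Frob}_p)\neq \pm 1$, i.e.\ ${\epsilon_i'}^{-1}\lambda_{f_i}$ is \emph{not} quadratic---so the two characters cannot agree, full stop, with no ramified/unramified split needed. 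Your Frobenius computation can in fact be completed (since $\omega_p^{k-2-j}\epsilon_{i,p}\chi_p$ has $p$-power conductor it is determined by its restriction to $I_p$, so triviality on $I_p$ forces global triviality, leaving only the unramified quadratic $\chi'$), but the paper's ``quadratic vs.\ non-quadratic'' rephrasing is cleaner and makes the general (bad $p$, nontrivial $\epsilon_{i,p}$) case no harder than the good-reduction one.
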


\proof Recall $\lambda_{f_i}(\text{Frob}_p) = \alpha_p(f_i) \equiv a_p(f_i) $ (mod  $\pi)$. If $p>2k-3$ then $A''^{I_p}_{f_i\chi(-j)}$ is $\pi$-divisible (by Corollary \ref{78914121881} and Remark \ref{div}). In addition, if $\omega_p^{k-2-j}\epsilon_{i,p}$ (mod $\pi$) is not a quadratic character then $(\omega_p^{k-2-j}\epsilon_{i,p})_{\mid_{I_p}} \neq \pm1$ (mod $\pi$) as $\omega_p$ and $\epsilon_{i,p}$ are determined by their values on $I_p$. Hence $(\omega_p^{k-2-j}\epsilon_{i,p}\chi)_{\mid_{I_p}} \neq 1$ (mod $\pi$)  for any quadratic character $\chi$.  As $\epsilon'_i$ and $\lambda_{f_i}$ are unramified at $p$, we deduce that $\omega_p^{k-2-j}\epsilon_{i,p}\chi \neq {\epsilon'_i}^{-1} \lambda_{f_i}$ (mod $\pi$) or equivalently $\omega_p^{k-2-j}\epsilon_i\chi \neq  \lambda_{f_i}$ (mod $\pi$). Using this, we get  $H^2(\Q_p, A'_{f_i\chi(-j)}[\pi]) =0$ by Lemma \ref{lemma2.13}. Consequently, $H^1_\mathrm{Gr}(\Q_p, A_{f_i\chi(-j)})$ is $\pi$-divisible. 

\medskip

 In the second case,  assume   $p>2k-3$,  $a_p(f_i) \neq {\pm }\epsilon_i'(\text{Frob}_p)$ (mod $\pi$) and $\omega_p^{k-2-j}\epsilon_{i,p}$ (mod $\pi$) is a quadratic character.  Then $\omega_p^{k-2-j}\epsilon_{i,p}\chi$ (mod $\pi$) is quadratic   and ${\epsilon_i'}^{-1}\lambda_{f_i}$ is not a quadratic character.    Therefore $\omega_p^{k-2-j}\epsilon_{i,p}\chi \neq {\epsilon'_i}^{-1}\lambda_{f_i}$ (mod $\pi$). Thus again by Lemma \ref{lemma2.13}, $H^2(\Q_p, A'_{f_i\chi(-j)}[\pi]) =0$ and $H^1_\mathrm{Gr}(\Q_p, A_{f_i\chi(-j)})$ is $\pi$-divisible in this case as well.

\begin{lemma}\label{combiningwithdswandourresult}
Recall $f_i \in S_{k}(\Gamma_0(Np^{t_i}), \epsilon_i)$ be $p$-ordinary, where $i=1,2$. Let us assume $k\neq 3$, $p>2k-3$. Also assume  $t_1=t_2=0$ i.e. $f_i \in S_{k}(\Gamma_0(N), \epsilon_i)$ with $p \nmid N$. Consider the isomorphism $ H^1(\Q_p, A_{f_1\chi(-j)}[\pi^r]) \stackrel{[\phi^r]}{\lra} H^1(\Q_p, A_{f_2\chi(-j)}[\pi^r])$ induced from the $G_\Q$ linear isomorphism $\phi: A_{f_1\chi(-j)}[\pi^r] \lra A_{f_2\chi(-j)}[\pi^r]$ with $j=k-2$. Then for $j=k-2$, $[\phi]$ induces an isomorphism $H^1_\mathrm{BK}(\Q_p, A_{f_1\chi(-j)}[\pi^r]) \stackrel{[\phi]}{\lra} H^1_\mathrm{BK}(\Q_p, A_{f_2\chi(-j)}[\pi^r])$ for every quadratic character $\chi$ of $G_\Q$.
\end{lemma}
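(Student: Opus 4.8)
The plan is to treat the one remaining critical twist $j=k-2$ purely as a statement about the local condition at $p$, since this is exactly the value at which the Bloch--Kato-to-Greenberg comparison of Proposition~\ref{blochkatogreenbergcomparison} is not available in general. Indeed, the character of ${V'}_{f_i\chi(-j)}$ is $\lambda_{f_i}^{-1}\epsilon_i\chi\omega_p^{k-1-j}$, so at $j=k-2$ the Tate dual ${V'}_{f_i\chi(-j)}^{*}(1)$ acquires the character $\lambda_{f_i}\epsilon_i^{-1}\chi$, with the cyclotomic factor $\omega_p$ having cancelled. Hence $D_\text{crys}({V'}_{f_i\chi(-j)}^{*}(1))^{\phi=1}$ can be nonzero, precisely when $\epsilon_{i,p}\chi$ is unramified and $\lambda_{f_i}=\pm\epsilon_i'$, i.e. $a_p(f_i)\equiv\pm\epsilon_i(\mathrm{Frob}_p)\pmod\pi$; this is the case excluded in Lemma~\ref{hgfgrsfdjl,kjf}, and there $H^1_\mathrm{Gr}(\Q_p,A_{f_i\chi(-j)})$ need not be $\pi$-divisible. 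The strategy is therefore to bypass the Greenberg comparison and instead give a description of $H^1_\mathrm{BK}(\Q_p,A_{f_i\chi(-j)}[\pi^r])$ that depends only on the $G_p$-module $A_{f_i\chi(-j)}[\pi^r]$; because $[\phi]$ on $H^1(\Q_p,-[\pi^r])$ is functorially induced by the $G_p$-isomorphism $\phi$, it will then automatically carry the first condition to the second.

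First I would split into the subcases of the proof of Proposition~\ref{blochkatogreenbergcomparison} according to whether $\chi$ and $\epsilon_{i,p}\chi$ are ramified. Note that ${V''}_{f_i\chi(-j)}^{*}(1)$ carries the character $\lambda_{f_i}^{-1}\chi\omega_p^{j+1}$ with $j+1\ge 1$, so its crystalline $\phi=1$ part always vanishes by the nontrivial power of $p$; the only delicate piece is ${V'}_{f_i\chi(-j)}^{*}(1)$. In every subcase except the exceptional one ($\epsilon_{i,p}\chi$ unramified and $\lambda_{f_i}=\pm\epsilon_i'$) one still has $D_\text{crys}({V'}_{f_i\chi(-j)}^{*}(1))^{\phi=1}=0$, so the local comparison inside the proof of Proposition~\ref{blochkatogreenbergcomparison} gives $H^1_\mathrm{BK}(\Q_p,A_{f_i\chi(-j)}[\pi^r])=H^1_\mathrm{Gr}(\Q_p,A_{f_i\chi(-j)}[\pi^r])$. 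Since the Greenberg condition at $p$ is the kernel of the restriction $H^1(\Q_p,-[\pi^r])\to H^1(I_p,A''_{f_i\chi(-j)}[\pi^r])$ and $\phi$ restricts to a $G_p$-isomorphism $A''_{f_1\chi(-j)}[\pi^r]\cong A''_{f_2\chi(-j)}[\pi^r]$ by Lemma~\ref{phi''copy}, the map $[\phi]$ transports this condition, which settles the non-exceptional subcases. It therefore remains only to treat the exceptional subcase.

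For the exceptional subcase I would invoke the comparison result \cite{dsw} for the Bloch--Kato local condition at the central critical point, which in the Fontaine--Laffaille range $p>2k-3$ (and with $k\neq 3$ to avoid the degenerate configuration of Hodge--Tate weights at $j=k-2$) expresses $H^1_\mathrm{BK}(\Q_p,A_{f_i\chi(-j)}[\pi^r])$ intrinsically in terms of the two-step $G_p$-filtration $0\to A'_{f_i\chi(-j)}[\pi^r]\to A_{f_i\chi(-j)}[\pi^r]\to A''_{f_i\chi(-j)}[\pi^r]\to 0$ on the $\pi^r$-torsion module. Because $\omega_p^{k-1}\epsilon_{i,p}\not\equiv 1\pmod\pi$ (the hypothesis on $k$), the two graded pieces are non-isomorphic as $I_p$-representations, so this filtration is canonical; hence, exactly as in Lemma~\ref{phi''copy} and its analogue on the submodule $A'[\pi^r]$ (which one extracts using $H_0(I_p,A'_{f_i}[\pi^r])=0$), the isomorphism $\phi$ restricts to $G_p$-isomorphisms of each graded piece and of the whole filtration for $f_1$ and $f_2$. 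Transporting the intrinsic description of \cite{dsw} along $\phi$ then yields the asserted isomorphism. The main obstacle is precisely this exceptional subcase: when $\lambda_{f_i}=\pm\epsilon_i'$ the Bloch--Kato and ordinary conditions genuinely differ and the Greenberg group is not $\pi$-divisible, so the only leverage is a mod-$\pi^r$ description functorial in the torsion Galois module; the delicate point is to verify that \cite{dsw} still applies at the boundary weight $j=k-2$ after the possibly ramified twist by $\chi$, which is where $p>2k-3$ and $k\neq 3$ are used, and to check that the possible one-dimensional crystalline contribution from $D_\text{crys}({V'}_{f_i\chi(-j)}^{*}(1))^{\phi=1}$ is itself read off from the residual filtration and hence preserved by $\phi$.
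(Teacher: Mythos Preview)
Your strategy is close to the paper's but your case split is more complicated than needed and contains a gap. The paper splits cleanly on whether $\chi$ is ramified at $p$. If $\chi$ is ramified, then (since $t_i=0$ forces $\epsilon_{i,p}=1$) the character $\epsilon_i\chi$ is ramified while $\lambda_{f_i}$ is unramified, so $\epsilon_i\chi\not\equiv\lambda_{f_i}\pmod\pi$ automatically; Lemmas~\ref{lemma2.13} and~\ref{divpncase} then give $\pi$-divisibility of $H^1_\mathrm{Gr}(\Q_p,A_{f_i\chi(-j)})$, the argument inside Proposition~\ref{blochkatogreenbergcomparison} (with Remark~\ref{miyakerem}) yields $H^1_\mathrm{BK}=H^1_\mathrm{Gr}$ at the $\pi^r$-level, and Proposition~\ref{ipandip'} together with Lemma~\ref{phi''copy} finishes. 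If $\chi$ is unramified, then $V_{f_i\chi}$ is crystalline at $p$ (as $p\nmid N$), and \cite[Theorem~6.1, Case~(3)]{dsw} applies uniformly to the whole unramified case, using $k\neq3$ to ensure $k-2\neq(k-1)/2$; there is no need to carve out an ``exceptional'' subcase.

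Your gap lies in the ``non-exceptional'' subcase with $\chi$ unramified and $\lambda_{f_i}\neq\pm\epsilon_i'$. There you invoke the comparison inside Proposition~\ref{blochkatogreenbergcomparison} using only the vanishing of $D_\text{crys}({V'}_{f_i\chi(-j)}^{*}(1))^{\phi=1}$, but that proof also requires hypothesis~(4), namely $\pi$-divisibility of $H^1_\mathrm{Gr}$, which via Lemma~\ref{lemma2.13} needs the \emph{mod-$\pi$} inequality $\epsilon_i\chi\not\equiv\lambda_{f_i}\pmod\pi$. Your exact inequality $\lambda_{f_i}\neq\pm\epsilon_i'$ does not imply this, and your phrase ``$\lambda_{f_i}=\pm\epsilon_i'$, i.e.\ $a_p(f_i)\equiv\pm\epsilon_i(\mathrm{Frob}_p)\pmod\pi$'' conflates the two. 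So when $\chi$ is unramified, $\lambda_{f_i}\neq\pm\epsilon_i'$, but $a_p(f_i)\equiv\pm\epsilon_i'(\mathrm{Frob}_p)\pmod\pi$, your Greenberg route is not justified. Separately, your description of \cite{dsw} is off: that result proceeds via Fontaine--Laffaille theory for crystalline representations, not via the ordinary filtration $A'\subset A$, and in particular it requires $V_{f_i\chi}$ to be crystalline --- hence it is available exactly when $\chi$ is unramified at $p$, which is why the paper applies it there wholesale and why your worry about ``the possibly ramified twist by $\chi$'' is misplaced.
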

\proof We first consider the case when $\chi$ is a ramified character at $p$. Since $j=k-2$ and $\epsilon_i$, $\lambda_{f_i} $ are unramified at $p$, we deduce $ \epsilon_i \chi \neq\lambda_{f_i} $ (mod $\pi$) and hence by Lemma \ref{lemma2.13}, $H^2(\Q_p,A'_{f_i\chi(-j))}[\pi^r])=0$ for $i=1,2$. Also note that $(A^{''}_{f_{i}\chi(-j)})^{I_p}$ is divisible by Remark \ref{div} and thus by Lemma \ref{divpncase},
  $H^1_\mathrm{Gr}(\Q_p, A_{f_i\chi(-j)})$ is $\pi$-divisible. Further, by Remark \ref{miyakerem} and from the proof of Proposition \ref{blochkatogreenbergcomparison}, we obtain  $H^1_\mathrm{BK}(\Q_p, A_{f_i\chi(-j)}[\pi^r]) \cong  H^1_\mathrm{Gr}(\Q_p, A_{f_i\chi(-j)}[\pi^r]) $ for $i=1,2$. Finally, applying Proposition \ref{ipandip'}, we deduce the lemma in this case. 
  
  \medskip

Next we consider the case when $\chi$ is  unramified  at $p$. In this case as $t_i=0$, $V_{f_i\chi}$ is crystalline at $p$ for $i=1,2$. Then for any $j \neq \frac{k-1}{2}$, under the $G_\Q$ linear isomorphism $\phi: A_{f_1\chi(-j)}[\pi^r] \lra A_{f_2\chi(-j)}[\pi^r]$, it is shown in \cite[Theorem 6.1, Case (3), Page 10]{dsw} that  $[\phi]$ induces an isomorphism $H^1_\mathrm{BK}(\Q_p, A_{f_1\chi(-j)}[\pi^r]) \stackrel{[\phi]}{\lra} H^1_\mathrm{BK}(\Q_p, A_{f_2\chi(-j)}[\pi^r])$. Note that in the proof of \cite[Theorem 6.1, Case (3), Page 10]{dsw}  the case $r=1$ is covered; however from their proof we can see that the  result hold for a general $r$ as well. Now as $k \neq 3$, $k-2 \neq \frac{k-1}{2}$ and  the assertion in the lemma follows from the above mentioned result of \cite{dsw}. 
\begin{rem}
In fact, the proof of \cite[Theorem 6.1, Case (3), Page 10]{dsw} identifying $H^1_\mathrm{BK}(\Q_p, A_{f_1\chi(-j)}[\pi^r])$ with $H^1_\mathrm{BK}(\Q_p, A_{f_2\chi(-j)}[\pi^r])$ works for the pair $V_{f_1\chi(-j)}$ and $V_{f_2\chi(-j)}$ in both the cases when $f_1$ and $f_2$ are either ordinary or non-ordinary  at $p$; however they require $V_{f_i\chi(-j)}$ to be crystalline for $i=1,2$ as well as $j\neq \frac{k-1}{2}$. Thus the conclusion of Lemma \ref{combiningwithdswandourresult} can not be deduced only from the results of \cite{dsw}.
\end{rem} 

\medskip

Recall, if $p$ does not divide level of $h$ and $v_p(a_p(h)) \neq 0$, then for any  $K$ with $\Q_p \subset K \subset \Q_p(\mu_{p^\infty})$ we have defined the signed $i$ Selmer  local condition $H^1_i(K, A_{h(-j)}[\pi^r]) $ in \S \ref{sec2}.

\medskip

\begin{lemma}\label{signedselmercongruence70897}
Let $p \nmid N$ and for $i \in \{1, 2\}$, let $f_i \in S_k(\Gamma_0(N),\epsilon_i)$ be non-ordinary at $p$  with $p \geq k$. Let $K$ be any field such that $\Q_p(\mu_p) \subset K \subset \Q_p(\mu_{p^\infty})$. Consider the isomorphism $ H^1(K, A_{f_1(-j)}[\pi^r]) \stackrel{[\phi]}{\lra} H^1(K, A_{f_2(-j)}[\pi^r])$ induced from the $G_\Q$ linear isomorphism $\phi=\phi^r: A_{f_1(-j)}[\pi^r] \lra A_{f_2(-j)}[\pi^r]$ where $j \in \Z$. Then $[\phi]$ induces an isomorphism  $H^1_1(K, A_{f_1(-j)}[\pi^r]) \cong H^1_1(K, A_{f_2(-j)}[\pi^r])$. In particular, for $0 \leq j \leq k-2$, we have a canonical identification induced by $\phi$,
$$H^1_1(\Q_p(\mu_p), A_{f_1(-j)}[\pi^r]) \cong H^1_1(\Q_p(\mu_p), A_{f_2(-j)}[\pi^r]).$$
\end{lemma}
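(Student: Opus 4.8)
Because the given map $\phi$ is defined only on the $\pi^r$-torsion modules, the induced map $[\phi]$ exists only at the level of $H^1(K, A_{f_i(-j)}[\pi^r])$ and does not a priori extend to the divisible modules $A_{f_i(-j)}$. The plan is therefore, exactly as in the Greenberg case treated in Lemma \ref{sel5971}, to produce an intrinsic description of the finite-level signed condition $H^1_1(K, A_{f_i(-j)}[\pi^r])$ that depends only on the $G_{\Q_p}$-module $A_{f_i(-j)}[\pi^r]$, together with the integral $p$-adic Hodge data that survives reduction mod $\pi^r$. Granting such a description, the conclusion is immediate: $\phi$ identifies the defining data for $f_1$ and $f_2$, hence $[\phi]$ carries $H^1_1(K, A_{f_1(-j)}[\pi^r])$ onto $H^1_1(K, A_{f_2(-j)}[\pi^r])$.

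First I would untwist. Since $\omega_p^{-j}$ is a character of $\Gamma = \mathrm{Gal}(\Q_p(\mu_{p^\infty})/\Q_p)$, twisting by it is an isomorphism of Iwasawa cohomology, so the whole construction reduces to the untwisted forms, for which $T_{f_i}$ is crystalline (as $p \nmid N$). Moreover $\phi$, after tensoring with $\omega_p^{j}$, yields a $G_\Q$-isomorphism $A_{f_1}[\pi^r] \cong A_{f_2}[\pi^r]$, equivalently $T_{f_1}/\pi^r \cong T_{f_2}/\pi^r$. Unwinding the definitions in \S\ref{sec2}, $H^1_1(K, A_{f_i(-j)}[\pi^r]) = (i_r^*)^{-1}\big(H^1_1(K, A_{f_i(-j)})\big)$, and $H^1_1(K, A_{f_i(-j)})$ is obtained from $\ker(\mathrm{Col}_{f_i,1}) \subset H^1_\mathrm{Iw}(\Q_p, T_{f_i})$ by twisting, projecting via $\mathrm{Pr}_K$, and passing from $T$ to $A$. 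The essential step is to show that this recipe factors, modulo $\pi^r$, through $H^1_\mathrm{Iw}(\Q_p, T_{f_i}/\pi^r)$ and a reduction $\mathrm{Col}_{f_i,1} \bmod \pi^r$ of the Coleman map.

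Here the hypothesis $p \ge k$ enters decisively. The Coleman maps $\mathrm{Col}_{f_i,1}$ of Lei--Loeffler--Zerbes are built from the Wach module $\mathbb{N}(T_{f_i})$ of the crystalline representation $T_{f_i}$; in the Fontaine--Laffaille range $p \ge k$ this construction is functorial and its reduction $\mathbb{N}(T_{f_i})/\pi^r$ is determined by $T_{f_i}/\pi^r$ alone. Consequently the $G_\Q$-isomorphism $T_{f_1}/\pi^r \cong T_{f_2}/\pi^r$ induced by $\phi$ identifies the reduced Wach modules, hence the reduced Coleman maps, compatibly with the isomorphism $H^1_\mathrm{Iw}(\Q_p, T_{f_1}/\pi^r) \cong H^1_\mathrm{Iw}(\Q_p, T_{f_2}/\pi^r)$ coming from $\phi$. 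Thus $\phi$ matches $\ker(\mathrm{Col}_{f_1,1}) \bmod \pi^r$ with $\ker(\mathrm{Col}_{f_2,1}) \bmod \pi^r$. Since the remaining operations --- the twist by $\omega_p^{-j}$, the projection $\mathrm{Pr}_K$, and the passage to $A[\pi^r]$ --- are all functorial in the $G_{\Q_p}$-module and commute with $[\phi]$, this matching descends to the desired equality of local conditions. The final statement is the special case $K = \Q_p(\mu_p)$; the bound $0 \le j \le k-2$ recorded there plays no role in the present argument (which holds for every $j \in \Z$) and only flags the critical range used later in the Bloch--Kato comparison.

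The step I expect to be the main obstacle is making the phrase ``depends only on $T_{f_i}/\pi^r$'' precise and showing that reduction mod $\pi^r$ commutes with taking the kernel of the Coleman map. The Coleman maps take values in the Iwasawa algebra $O[\Delta][[T]]$, which is not finite over $O$, so one must control the cokernel and any $\pi$-power torsion of $\mathrm{Col}_{f_i,1}$ in order to conclude that $\ker(\mathrm{Col}_{f_i,1})\bmod\pi^r = \ker(\mathrm{Col}_{f_i,1}\bmod\pi^r)$, and one must match the two integral Wach-module structures on the nose rather than merely up to abstract isomorphism. This is where the integrality and functoriality of the Lei--Loeffler--Zerbes construction in the range $p \ge k$ must be invoked carefully.
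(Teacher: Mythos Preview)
Your core idea --- that the signed local condition at the $\pi^r$-level is controlled by the Wach module and the Coleman map reduced modulo $\pi^r$, and that these depend only on $T_{f_i}/\pi^r$ --- is exactly the content of \cite[Lemma~4.4]{lh}, which the paper invokes as its first step. So at the Iwasawa level your outline and the paper's argument coincide. The paper, however, organizes the proof differently: it cites \cite[Lemma~4.4]{lh} to obtain the isomorphism
\[
H^1_1(\Q_p(\mu_{p^\infty}), A_{f_1(-j)}[\pi^r]) \cong H^1_1(\Q_p(\mu_{p^\infty}), A_{f_2(-j)}[\pi^r])
\]
directly at the top of the tower, and then \emph{descends} to $K$ by combining two further inputs: the vanishing $A_{f_i(-j)}^{G_{\Q_p(\mu_{p^\infty})}}=0$ (proved in \cite[Lemma~4.4]{lei} under $p\ge k$), which yields $H^1_1(K,A_{f_i(-j)}[\pi^r])\cong H^1_1(K,A_{f_i(-j)})[\pi^r]$; and a control result \cite[Remark~2.5]{lh} identifying $H^1_1(K,A_{f_i(-j)})$ with the $\mathrm{Gal}(\Q_p(\mu_{p^\infty})/K)$-invariants of $H^1_1(\Q_p(\mu_{p^\infty}),A_{f_i(-j)})$.

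Your descent step, by contrast, is where the proposal has a genuine gap. You assert that $\mathrm{Pr}_K$ and ``the passage to $A[\pi^r]$'' are functorial and commute with $[\phi]$, but $\phi$ does not act on $T_{f_i}$, $V_{f_i}$, or $A_{f_i}$ --- only on their $\pi^r$-torsion --- so $\mathrm{Pr}_K$ on $H^1_{\mathrm{Iw}}(\Q_p,T_{f_i})$ cannot literally commute with $\phi$. What you actually need is that the image of $\ker(\mathrm{Col}_{f_i,1})\bmod\pi^r$ under the mod-$\pi^r$ projection $H^1_{\mathrm{Iw}}(\Q_p,T_{f_i}/\pi^r)\to H^1(K,T_{f_i}/\pi^r)\cong H^1(K,A_{f_i(-j)}[\pi^r])$ coincides with $H^1_1(K,A_{f_i(-j)}[\pi^r])$ as defined via pullback along $i_r^*$ from the full divisible $A_{f_i(-j)}$. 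Establishing this identification is not formal: it requires precisely the vanishing of $G_K$-invariants on $A_{f_i(-j)}$ (to make $i_r^*$ injective and to identify $H^1_1$ on $A[\pi^r]$ with $H^1_1$ on $A$ taken $[\pi^r]$) together with the control statement. In the paper's argument this is where the hypothesis $p\ge k$ is actually consumed --- not for Fontaine--Laffaille functoriality of the Wach module, as you suggest, but to force $A_{f_i(-j)}^{G_{\Q_p(\mu_{p^\infty})}}=0$ via \cite{lei}. Your acknowledged ``main obstacle'' about commuting reduction with kernels is real, but the descent you treat as routine is an equally substantial missing piece.
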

 \proof First of all, by an argument entirely similar to \cite[Lemma 4.4]{lh},  for every $j$, we have a canonical isomorphism induced by $\phi$
 \begin{equation}\label{afdkajFKLAUR}
 H^1_1(\Q_p(\mu_{p^{\infty}}), A_{f_1(-j)}[\pi^r]) \cong H^1_1(\Q_p(\mu_{p^{\infty}}), A_{f_2(-j)}[\pi^r]).
 \end{equation}
 Under the assumption $p\geq k$, it follows by \cite[Lemma 4.4]{lei} that 
 $$A_{f_i(-j)}^{G_{\Q_p(\mu_{p^\infty})}} =0.$$ 
 This is used in \cite[Lemma 4.3]{lh} to deduce  for $i=1,2$ and any $j$
 \begin{equation}\label{zsdkjalkRUL}
 H^1_1(\Q_p(\mu_{p^\infty}), A_{f_i(-j)}[\pi^r]) \cong H^1_1(\Q_p(\mu_{p^\infty}), A_{f_i(-j)})[\pi^r].
 \end{equation}
Further by \cite[Remark 2.5]{lh} we see that 
\begin{equation}\label{jdkadiqIPOap}
H^1_1(\Q_p(\mu_{p^\infty}), A_{f_i(-j)})^{\mathrm{Gal}(\Q_p(\mu_{p^\infty})/K)} \cong H^1_1(K, A_{f_i(-j)}).
\end{equation} 
From \eqref{zsdkjalkRUL} and \eqref{jdkadiqIPOap}, we deduce
\begin{equation}\label{lawkdpq][D[]QOE[10}
H^1_1(K, A_{f_i(-j)})[\pi^r] \cong H^1_1(\Q_p(\mu_{p^\infty}), A_{f_i(-j)}[\pi^r])^{\mathrm{Gal}(\Q_p(\mu_{p^\infty})/K)} 
\end{equation}
On the other hand, from the definition of $H^1_1(K, A_{f_i(-j)}[\pi^r])$ and the  canonical identification of $H^1(K, A_{f_i(-j)}[\pi^r]) \cong H^1(K, A_{f_i(-j)})[\pi^r]  $ induced via the inclusion map, we have 
\begin{equation}\label{kj4t48rjalsddj}
H^1_1(K, A_{f_i(-j)}[\pi^r])\cong H^1_1(K, A_{f_i(-j)})[\pi^r]
\end{equation}
From \eqref{lawkdpq][D[]QOE[10} and \eqref{kj4t48rjalsddj},   for every  $j$, we have  a canonical  isomorphism 
\begin{equation}\label{JKHSDYIOQ}
H^1_1(K, A_{f_1(-j)}[\pi^r]) \cong H^1_1(K, A_{f_2(-j)}[\pi^r])
\end{equation}
 which is induced from the isomorphism  $ H^1(K, A_{f_1(-j)}[\pi^r]) \stackrel{[\phi]}{\lra} H^1(K, A_{f_2(-j)}[\pi^r])$ coming from the $G_\Q$ linear isomorphism $\phi$. \qed

\begin{proposition}\label{comparison-ss-bk-s1-s2}
Let $p \nmid N$ and for $i \in \{1, 2\}$, let $f_i \in S_k(\Gamma_0(N),\epsilon_i)$ be non-ordinary at $p$ with $p \geq k$. Then for every $0 \leq j \leq k-2$,
\begin{equation}\label{bloch-kato-selmer-greenberg-signed}
H^1_\mathrm{BK}(\Q_p(\mu_p), A_{f_i(-j)})=H^1_{1}(\Q_p(\mu_p), A_{f_i(-j)}).
\end{equation} 
\end{proposition}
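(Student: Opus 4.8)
The plan is to run the comparison at the level of the rational representation (with $K_{\mathfrak p}$-coefficients) and then push everything down to the divisible module via the projection $V_{f_i(-j)} \to A_{f_i(-j)}$. By definition $H^1_\mathrm{BK}(\Q_p(\mu_p), A_{f_i(-j)})$ is the image of $H^1_f(\Q_p(\mu_p), V_{f_i(-j)})$ under $\mathrm{Proj}$, whereas $H^1_1(\Q_p(\mu_p), A_{f_i(-j)}) = \mathrm{Proj}(V_1)$ with $V_1 = \langle \iota\big(\mathrm{Pr}_{\Q_p(\mu_p)}(\Ker(\mathrm{Col}_{f_i,1})\otimes \omega_p^{-j})\big)\rangle$. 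Since the two maps $p^*$ and $\mathrm{Proj}$ are one and the same projection $V_{f_i(-j)} \to A_{f_i(-j)}$ on cohomology, it is enough to prove the $K_{\mathfrak p}$-level identity
$$H^1_f(\Q_p(\mu_p), V_{f_i(-j)}) = V_1 \quad \text{inside } H^1(\Q_p(\mu_p), V_{f_i(-j)}),$$
and applying $\mathrm{Proj}$ then yields the proposition. This reduction is the point where I make essential use of the fact that $\Q_p(\mu_p)$ is the bottom layer of the cyclotomic $\Z_p$-tower, matching exactly the $\Delta = \mathrm{Gal}(\Q(\mu_p)/\Q)$ factor built into the target $O[\Delta][[\Gamma]]$ of the Coleman maps.

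To establish the $K_{\mathfrak p}$-level identity I would first prove the inclusion $V_1 \subseteq H^1_f(\Q_p(\mu_p), V_{f_i(-j)})$. Because $p \nmid N$ the form is good at $p$ and $V_{f_i(-j)}$ is crystalline, and as $0 \le j \le k-2$ its Hodge--Tate weights lie in the critical range where the Perrin--Riou/Coleman machinery of Lei--Loeffler--Zerbes (\cite{llz}, \cite{llzant}) applies and where $H^1_f = H^1_g$ (the relevant $D_{\mathrm{crys}}(V^*(1))^{\phi=1}$ vanishing being identical to the computation used in the proof of Proposition \ref{blochkatogreenbergcomparison}). Since the Coleman maps $\mathrm{Col}_{f_i,1}$ factor the Perrin--Riou regulator and interpolate the Bloch--Kato dual exponential layer by layer, any class in $\Ker(\mathrm{Col}_{f_i,1})$ has vanishing dual exponential at the bottom layer, so its projection is crystalline and lands in $H^1_f$. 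I would then upgrade this to equality by a rank count over $\Q_p(\mu_p)$: the dimension of $H^1_f$ is read off from the crystalline formula $\dim H^1_f = \dim H^0 + \dim D_{\mathrm{dR}}/\mathrm{Fil}^0$ (as in \cite[2.3.5]{be}), while the rank of $\mathrm{Pr}_{\Q_p(\mu_p)}(\Ker \mathrm{Col}_{f_i,1})$ is computed from the $O[\Delta][[\Gamma]]$-module structure of $\Ker(\mathrm{Col}_{f_i,1})$ together with the descent identity of \cite[Remark 2.5]{lh}; in the stated weight range these coincide. Throughout, the hypothesis $p \ge k$ enters via \cite[Lemma 4.4]{lei} to guarantee $A_{f_i(-j)}^{G_{\Q_p(\mu_{p^\infty})}} = 0$, which legitimizes both the passage between $V$- and $A$-level conditions and the clean descent from $\Q_p(\mu_{p^\infty})$ down to $\Q_p(\mu_p)$ exactly as in Lemma \ref{signedselmercongruence70897}.

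The main obstacle I anticipate is precisely the reverse containment in the $K_{\mathfrak p}$-level identity. The inclusion $V_1 \subseteq H^1_f$ is essentially formal once one knows the Coleman maps compute the dual exponential, but pinning down $\mathrm{Pr}_{\Q_p(\mu_p)}(\Ker \mathrm{Col}_{f_i,1})$ as landing exactly onto the Bloch--Kato subspace — rather than a proper sub- or super-space — requires the explicit reciprocity law of \cite{llz}, \cite{llzant} relating the two Coleman maps to the two eigenvector (Wach-module) directions in $D_{\mathrm{crys}}(V_{f_i})$. In particular one must check that it is the signed-$1$ kernel (and not the signed-$2$ kernel) that matches $H^1_f$ for the chosen basis at the bottom cyclotomic layer, and that the matching persists across the full twist range $0 \le j \le k-2$; I expect this to rest on the finer interpolation properties recorded in \cite{lh}.
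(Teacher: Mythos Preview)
Your overall strategy --- reduce to the rational representation and compare the signed kernel with the Bloch--Kato subspace via the dual exponential/Coleman map link --- is the right idea, and is exactly the heuristic the paper states at the outset. However, your argument has a genuine gap precisely where you flag it: you prove (sketchily) only the inclusion $V_1 \subseteq H^1_f$, and then promise a dimension count for the reverse inclusion without carrying it out. That dimension count is not automatic; the $O[\Delta][[\Gamma]]$-rank of $\Ker(\mathrm{Col}_{f_i,1})$ together with the descent of \cite[Remark~2.5]{lh} does not by itself give you the $K_{\mathfrak p}$-dimension of $\mathrm{Pr}_{\Q_p(\mu_p)}(\Ker \mathrm{Col}_{f_i,1})\otimes K_{\mathfrak p}$ without controlling possible loss under $\mathrm{Pr}_{\Q_p(\mu_p)}$ and under $\iota$.

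The paper avoids this entirely. It works at the $T$-level (defining $H^1_\mathrm{BK}(\Q_p(\mu_p), T_{h(-j)})$ as the preimage of $H^1_f(\Q_p(\mu_p), V_{h(-j)})$) and invokes a single explicit identity: \cite[Prop.~3.11 and Eq.~(3.6)]{llz} specialized to the layer $n=1$, evaluated at two distinct $u_1, u_2 \in \Z_p^\times$. This directly yields the three-way equivalence
\[
z_j \in H^1_\mathrm{BK}(\Q_p(\mu_p), T_{h(-j)}) \;\Longleftrightarrow\; z_j \in \mathrm{Pr}_{\Q_p(\mu_p)}(\Ker(\mathrm{Col}_{h,1})\otimes \omega_p^{-j}) \;\Longleftrightarrow\; z_j \in \mathrm{Pr}_{\Q_p(\mu_p)}(\Ker(\mathrm{Col}_{h,2})\otimes \omega_p^{-j})
\]
for every $z \in H^1_\mathrm{Iw}(\Q_p, T_h)$, giving both inclusions at once. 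In particular \emph{both} signed conditions coincide with Bloch--Kato over $\Q_p(\mu_p)$, so your worry about whether it is signed-$1$ or signed-$2$ that matches $H^1_f$ is dissolved: they both do. No dimension argument is needed, and the hypothesis $p\ge k$ is not used in this proposition (it is used elsewhere, in Lemma~\ref{signedselmercongruence70897}, for the vanishing of $A_{f_i(-j)}^{G_{\Q_p(\mu_{p^\infty})}}$).
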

\proof The main idea is that Bloch-Kato condition  at $p$ is given by the kernel of the Perrin-Riou dual exponential map and local condition at $p$ for the signed $i$ Selmer group is given by the kernel of the Colman$_i$ map. One then deduces the result by looking at the relation between the dual exponential map and the Colman map. 

\medskip

 It is shown in \cite[Prop. 2.14, Remark 2.15]{lh} that for every $0 \leq j \leq k-2$,
\begin{equation}\label{akjdfhaahdad}
H^1_\mathrm{BK}(\Q_p, A_{f_i(-j)})=H^1_{1}(\Q_p, A_{f_i(-j)}) = H^1_2(\Q_p, A_{f_i(-j)})
\end{equation}
Here we remark that it follows from \cite[Remark 5.8]{llzant} that the condition in \cite[Remark 2.15]{lh} is always satisfied. We thank Antonio Lei for explaining this. We will use the results in \cite[\S 3.3]{llz}, to extend \eqref{akjdfhaahdad} to $\Q_p(\mu_p)$. 

We define $H^1_{BK}(\Q_p(\mu_p), T_{h(-j)})$ to be the preimage of $H^1_{BK}(\Q_p(\mu_p), V_{h(-j)})$ under the natural map induced by the inclusion $T_{h(-j)} \rightarrow V_{h(-j)}$. Let $z \in H^1_\mathrm{Iw}(\Q_p, T_{h})$ and we denote by $z_j$ its image in $H^1(\Q_p(\mu_p), T_{h(-j)})$ under the natural composite map 
$H^1_\mathrm{Iw}(\Q_p, T_{h}) \rightarrow H^1(\Q_p(\mu_p), T_{h}) \rightarrow H^1(\Q_p(\mu_p), T_{h(-j)})$. We choose two distinct $u_1$ and $u_2$ in $\Z_p^\times$ in \cite[Prop. 3.11]{llz} and use it in \cite[Equation (3.6), Page 836]{llz} for $n=1$ to deduce for $h \in \{f_1, f_2\}$, 
\begin{small}{
$$z_j\in H^1_{BK}(\Q_p(\mu_p), T_{h(-j)}) \Leftrightarrow z_j \in \text{Pr}_{\Q_p(\mu_p)}(\text{Ker}(\text{Col}_{h,1})\otimes \omega_p^{-j}) \Leftrightarrow z_j \in \text{Pr}_{\Q_p(\mu_p)}(\text{Ker}(\text{Col}_{h,2})\otimes \omega_p^{-j}),$$
}\end{small}
 where $0 \leq j \leq k-2$. Then it follows from the definitions of Bloch-Kato and signed Selmer condition that for $i=1,2$ 
 \begin{equation}\label{akjdfhaahdadmuptlevel}
H^1_{BK}(\Q_p(\mu_p), T_{f_i(-j)})=H^1_{1}(\Q_p(\mu_p), T_{f_i(-j)}) = H^1_2(\Q_p(\mu_p), T_{f_i(-j)})
\end{equation}
Consequently, we obtain
 
\begin{equation}\label{akjdfhaahdadmup}
H^1_{BK}(\Q_p(\mu_p), A_{f_i(-j)})=H^1_{1}(\Q_p(\mu_p), A_{f_i(-j)}) = H^1_2(\Q_p(\mu_p), A_{f_i(-j)})
\end{equation}
Then from the definition of $\pi^r$-Selmer condition given in \eqref{sel-mod-r-general}, it follows that for $i=1,2$,
\begin{equation}\label{bloch-kato-selmer-greenberg-signed1243}
H^1_{BK}(\Q_p(\mu_p), A_{f_i(-j)}[\pi^r])=H^1_{1}(\Q_p(\mu_p), A_{f_i(-j)}[\pi^r])=H^1_{2}(\Q_p(\mu_p), A_{f_i(-j)}[\pi^r]).
\end{equation} 
In particular, \eqref{bloch-kato-selmer-greenberg-signed} holds. \qed

\medskip

Now, we are ready to prove our main result of this article.

\medskip

\begin{theorem}\label{mainthmgr2}
Let $p$ be an odd prime and for $i=1,2,$ let $f_i$ be a  normalized cuspidal Hecke eigenform in $S_k(\Gamma_0(Np^{t_i}), \epsilon_i)$, where $(N,p) =1$, $k \geq 2$ and $t_i \in \N \cup \{0\}$. 
Let $r \in \N$ and $\phi : A_{f_1}[\pi^r] \lra A_{f_2}[\pi^r]$ be a $G_\Q$ linear isomorphism. We  assume the following:
\begin{enumerate}

\item $N$ is square-free and $\forall \ell \in S$,  $ \text{cond}_\ell(\bar{\rho}_{f_i})=\ell$ for $i=1,2$.
\item The condition $(\mathrm{C'_{i,\ell}})$, defined in equation \eqref{conduction-condition126}, is satisfied for $i=1,2$.

\item  Either [$p$-ss] or [$p$-ord] holds.

\begin{enumerate}
\item[{[$p$-ss]}] $p\geq k$, $f_1$and $f_2$ are non-ordinary at $p$,  and $t_1=t_2=0$.
\item[{[$p$-ord]}] $p > 2k-3$, $f_1$ and $f_2$ are ordinary at $p$,  and either [$p$-good]  or [$p$-general] holds.
\begin{enumerate} 

\item[{[$p$-good]}] $t_1=t_2=0$ and $k \neq 3$.
\item[{[$p$-general]}] All of (A), (B) and (C) are satisfied.
\begin{enumerate}
\item $a_p(f_i) \neq \pm \epsilon_i'(\text{Frob}_p)$ (mod $\pi$). 
\item  $\omega_p^{k-1}\epsilon_{i,p} \neq 1$ (mod $\pi$) for $i=1,2$. 
\item If both $k=2 $ and $t_i >0$ holds, then in addition assume $\lambda_{f_i} \neq \pm 1.$
\end{enumerate}
\end{enumerate}
\end{enumerate}

\end{enumerate}
Then  for every quadratic character $\chi $ of $G_\Q$ and for every fixed $j$ with $0 \leq j \leq k-2$, we have an isomorphism 
of the $\pi^r$- Bloch-Kato Selmer groups 
$$S_\mathrm{BK}(A_{f_1\chi (-j)}[\pi^{r}]/\Q) \cong S_\mathrm{BK}( A_{f_2\chi (-j)}[\pi^{r}]/\Q). $$
\end{theorem}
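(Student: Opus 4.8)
The plan is to deduce everything from the given congruence $\phi$ by a purely local analysis. Twisting $\phi$ by the character $\chi\omega_p^{-j}$ (the same on both sides) produces a $G_\Q$-linear isomorphism $A_{f_1\chi(-j)}[\pi^r]\cong A_{f_2\chi(-j)}[\pi^r]$, and hence a compatible family of isomorphisms $\phi_*$ on $H^1(G_\Sigma(\Q),-)$ and on each $H^1(\Q_v,-)$, $H^1(I_v,-)$ for $v\in\Sigma$, all commuting with localization and restriction. Since $S_\mathrm{BK}$ is cut out inside $H^1(G_\Sigma(\Q),-)$ by the conditions $H^1_\mathrm{BK}(\Q_v,-)$, it suffices to prove that for each $v\in\Sigma$ the map $\phi_*$ restricts to an isomorphism
\[ H^1_\mathrm{BK}(\Q_v,A_{f_1\chi(-j)}[\pi^r])\cong H^1_\mathrm{BK}(\Q_v,A_{f_2\chi(-j)}[\pi^r]); \]
applying the same to $\phi^{-1}$ then identifies the two Selmer groups. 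Away from $p$ this is immediate: for $q\in\Sigma$, $q\neq p$, hypotheses (1)--(2) and Corollary \ref{cor3467} give that $A_{f_i\chi(-j)}^{I_q}$ is divisible, so exactly as in Lemma \ref{sel5971} the mod-$\pi^r$ Bloch--Kato condition is the naive unramified kernel $\mathrm{Ker}\big(H^1(\Q_q,A_{f_i\chi(-j)}[\pi^r])\to H^1(I_q,A_{f_i\chi(-j)}[\pi^r])\big)$, which the $I_q$-equivariant $\phi_*$ preserves.

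The work is all at $p$, and I would split it along hypothesis (3). In the ordinary case [$p$-ord], whenever Lemma \ref{hgfgrsfdjl,kjf} applies the group $H^1_\mathrm{Gr}(\Q_p,A_{f_i\chi(-j)})$ is $\pi$-divisible, so Proposition \ref{blochkatogreenbergcomparison} identifies $H^1_\mathrm{BK}$ with $H^1_\mathrm{Gr}$ at $p$; combined with the previous paragraph this gives $S_\mathrm{BK}(A_{f_i\chi(-j)}[\pi^r]/\Q)=S_\mathrm{Gr}(A_{f_i\chi(-j)}[\pi^r]/\Q)$ for $i=1,2$, whence the isomorphism follows at once from the Greenberg companion Theorem \ref{mainthmgr}. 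This covers $0\le j\le k-2$ in the subcase [$p$-general] and $0\le j<k-2$ in [$p$-good]. The single leftover configuration is $j=k-2$ under [$p$-good], where $\pi$-divisibility of $H^1_\mathrm{Gr}$ may fail; there Lemma \ref{combiningwithdswandourresult} shows directly that $\phi_*$ preserves $H^1_\mathrm{BK}(\Q_p,-)$ (via \cite{dsw} for $\chi$ unramified and the Greenberg route for $\chi$ ramified), and together with the $q\neq p$ analysis this finishes that $j$ as well.

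In the non-ordinary case [$p$-ss] the signed theory replaces Greenberg. If $\chi$ is unramified at $p$ (in particular if $\chi$ is trivial), then $f_i\otimes\chi$ is again good and non-ordinary at $p$ with $p\ge k$, so I may apply the signed comparison to the pair $f_1\otimes\chi,f_2\otimes\chi$ directly: Proposition \ref{comparison-ss-bk-s1-s2} identifies $H^1_\mathrm{BK}(\Q_p(\mu_p),A_{f_i\chi(-j)})$ with the signed condition $H^1_1(\Q_p(\mu_p),A_{f_i\chi(-j)})$, and Lemma \ref{signedselmercongruence70897} shows $\phi_*$ carries the signed conditions into one another over $\Q_p(\mu_p)$, for every $0\le j\le k-2$; since $[\Q_p(\mu_p):\Q_p]=p-1$ is prime to $p$, inflation--restriction descends this matching of Bloch--Kato conditions to $\Q_p$ by taking $\mathrm{Gal}(\Q_p(\mu_p)/\Q_p)$-invariants, compatibly with $\phi_*$ (and for $j\neq\tfrac{k-1}{2}$ one may instead cite the crystalline comparison of \cite{dsw}). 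A ramified quadratic $\chi$ cuts out the unique quadratic subfield of $\Q_p(\mu_p)$, so its twist is recovered as a $\mathrm{Gal}$-eigenspace after restricting the $\Q_p(\mu_p)$-level comparison to that subfield, again respected by $\phi_*$.

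I expect the analysis at $p$ to be the main obstacle. It is precisely there that the local representation fails to be unramified, so the clean $q\neq p$ argument collapses; one is forced to invoke the full Greenberg-- and signed--to--Bloch--Kato comparisons, to track the $\pi$-divisibility of $H^1_\mathrm{Gr}(\Q_p,A_{f_i\chi(-j)})$, to treat the near-central twist $j=k-2$ (and, for $k$ odd, $j=\tfrac{k-1}{2}$) separately, and, in the supersingular regime, to descend from $\Q_p(\mu_p)$ and feed in the crystalline input of \cite{dsw}.
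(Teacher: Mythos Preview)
Your proposal is correct and follows essentially the same route as the paper's proof: match the local conditions under $\phi_*$, handle primes $q\neq p$ via hypotheses (1)--(2) and Corollary~\ref{cor3467}, and at $p$ split into the ordinary case (Greenberg comparison via Proposition~\ref{blochkatogreenbergcomparison} and Theorem~\ref{mainthmgr}, with Lemma~\ref{combiningwithdswandourresult} for the leftover $j=k-2$ under [$p$-good]) and the supersingular case (signed comparison over $\Q_p(\mu_p)$ via Lemma~\ref{signedselmercongruence70897} and Proposition~\ref{comparison-ss-bk-s1-s2}, then descent by $\Delta$-invariants). The paper's treatment of a ramified $\chi$ under [$p$-ss] is phrased via the decomposition $\chi=\chi_p\chi'$ (with $\chi'$ unramified at $p$), applying the signed comparison to $f_i\otimes\chi'$ and then twisting by $\chi_p$ over $\Q_p(\mu_p)$ before taking $\Delta$-invariants---this is exactly your eigenspace observation, stated more explicitly.
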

\proof  First we consider the case (1), (2) and [$p$-ord] are satisfied.  Note that by Theorem \ref{rhof}(2) and Lemma  \ref{phi''copy}, $a_p(f_1) \equiv a_p(f_2)$ (mod $\pi^r$) and hence  $a_p(f_1) \equiv a_p(f_2)$ (mod $\pi$) also. 
Now we consider a subcase where [$p$-ord] is satisfied via [$p$-general]. Then  by hypothesis  (A)  i.e. $a_p(f_i) \neq \pm \epsilon_i'(\text{Frob}_p)$ (mod $\pi$)    together with Lemma \ref{hgfgrsfdjl,kjf}(ii),  we get $H^1_\mathrm{Gr}(\Q_p, A_{f_i\chi(-j)})$ is $\pi$-divisible for $i=1,2$. 
Therefore using conditions (1), (2) and   $\lambda_{f_i} \neq \pm 1$ (if necessary), Proposition \ref{blochkatogreenbergcomparison} and Remark \ref{miyakerem}, we deduce for each fixed $j$ with $0 \leq j \leq k-2$ and for every $\chi$,
\begin{equation}\label{bkgrbkgrbkgr}
 S_\mathrm{BK}( A_{f_i\chi (-j)}[\pi^{r}]/\Q) \cong S_\mathrm{Gr}( A_{f_i\chi(-j)}[\pi^{r}]/\Q)  .
 \end{equation}
Further given the hypothesis (B) i.e.  $\omega_p^{k-1}\epsilon_{i,p} \neq 1$ (mod $\pi$) for $i=1,2$, we  apply  \eqref{bkgrbkgrbkgr} in Theorem \ref{mainthmgr} to deduce  Theorem \ref{mainthmgr2} in this case.

\medskip

On the other hand if [$p$-ord] is satisfied via [$p$-good], then by Lemma \ref{hgfgrsfdjl,kjf}(i),  we get $H^1_\mathrm{Gr}(\Q_p, A_{f_i\chi(-j)})$ is $\pi$-divisible for $i=1,2$ as long as $j \neq k-2$.  Therefore once again using conditions (1), (2),   Proposition \ref{blochkatogreenbergcomparison} and Remark \ref{miyakerem}, we deduce for $j \neq k-2$ and for every $\chi$, the isomorphism in \eqref{bkgrbkgrbkgr} between Greenberg and Bloch-Kato Selmer group continue to hold. Also as $p> 2k-3$ and $t_1=t_2=0$,  $\omega_p^{k-1}  \neq 1$ (mod $\pi$) and  $\epsilon_{i,p}=1$ for $i=1,2$. Hence as in previous paragraph, we use \eqref{bkgrbkgrbkgr} in Theorem  \ref{mainthmgr} to obtain  $0\leq j \leq k-3$ case of Theorem \ref{mainthmgr2}. On the other hand, if $j=k-2$ then as $k\neq 3$ by our assumption, we apply Lemma \ref{combiningwithdswandourresult} to directly obtain the canonical isomorphism  induced from $\phi$, 
\begin{equation}\label{jdhduuiepopo}
H^1_\mathrm{BK}(\Q_p, A_{f_1\chi(-j)}[\pi^r]) \stackrel{[\phi]}{\lra} H^1_\mathrm{BK}(\Q_p, A_{f_2\chi(-j)}[\pi^r])
\end{equation}
 for every quadratic character $\chi$ of $G_\Q$.  Moreover as explained in the proof of Proposition \ref {blochkatogreenbergcomparison}, for every  \begin{small}{$q \in \Sigma \setminus \{p\}$}\end{small}, by assumptions (1), (2) and Corollary \ref{cor3467} we have  $H^1_\mathrm{BK}(\Q_q,  A_{{f_i}\chi(-j)}[\pi^r]) = H_\mathrm{Gr}^1(\Q_q, A_{{f_i}\chi(-j)}[\pi^r]) $ and thus for $i=1,2$ and for every $j$ such that $0\leq j \leq k-2$, we can canonically identify, 
 \begin{equation}\label{xajhgcbjaha}
\frac{H^1(\Q_q,  A_{{f_i}\chi(-j)}[\pi^r])}{H^1_\mathrm{BK}(\Q_q,  A_{{f_i}\chi(-j)}[\pi^r])} \cong H^1(I_q,  A_{{f_i}\chi(-j)}[\pi^r]), \quad q \in \Sigma \setminus \{p\}
\end{equation}
From \eqref{jdhduuiepopo} and \eqref{xajhgcbjaha}, by the definition of $\pi^r$ Bloch-Kato Selmer group, the remaining  $j=k-2$ case of Theorem \ref{mainthmgr2} in the [$p$-good] case is obtained. This completes the proof of Theorem \ref{mainthmgr2} when assumptions (1), (2) and [$p$-ord] are satisfied.

 \medskip
 
 Next we consider the case when assumptions  (1), (2) and [$p$-ss] are satisfied. Then the definition of the local factor for $S_\mathrm{BK}(A_{f_1\chi (-j)}[\pi^{r}]/\Q)$ at a prime $q \in \Sigma \setminus \{p\} $ is the same as in the [$p$-ord] case. Hence using the same argument as in the [$p$-ord] case, we obtain the identification of \eqref{xajhgcbjaha} for every quadratic character $\chi$ of $G_\Q$ and every $0 \leq j \leq k-2$ thanks to our assumptions (1) and (2). Thus to prove Theorem \ref{mainthmgr2}, it suffices  to establish a canonical identification induced by $\phi$ for every $\chi$ and $0 \leq j \leq k-2$, analogues to \eqref{jdhduuiepopo} under the assumption [$p$-ss].
 
 \medskip
 
 Now as $p \nmid N$, $t_1=t_2=0$ and $p \geq k$, using Lemma  \ref{signedselmercongruence70897} and Proposition \ref{comparison-ss-bk-s1-s2}, we get a canonical isomorphism 
\begin{equation}\label{bloch-kato-selmer-greenberg-signed1243adj}
H^1_\mathrm{BK}(\Q_p(\mu_p), A_{f_1(-j)}[\pi^r]) \cong H^1_\mathrm{BK}(\Q_p(\mu_p), A_{f_2(-j)}[\pi^r]).
\end{equation} 
for every $0 \leq j \leq k-2$ which is induced by $\phi$. Now let $\chi$ be a quadratic character. Following the notation of \S \ref{sec0}, we can write $\chi=\chi_p\chi'$ where $\chi_p$ is quadratic character whose conductor is a power of $p$ and $\chi'$ is unramified at $p$. Note that $f_i\otimes \chi'$ is good at $p$ i.e. the level of $f_i\otimes \chi'$ is coprime to $p$. Thus from \eqref{bloch-kato-selmer-greenberg-signed1243adj}, we still have a canonical isomorphism
\begin{equation}\label{bloch-kato-selmer-greenberg-signed1243adjadhjk}
H^1_\mathrm{BK}(\Q_p(\mu_p), A_{f_1\chi'(-j)}[\pi^r]) \cong H^1_\mathrm{BK}(\Q_p(\mu_p), A_{f_2\chi'(-j)}[\pi^r]).
\end{equation} 
induced by $\phi$ for every $j$. Notice that $\chi_p$ is the unique quadratic character of $\Delta= \mathrm{Gal}(\Q_p(\mu_p)/\Q_p)$, whence $H^1_\mathrm{BK}(\Q_p(\mu_p), A_{f_i\chi'(-j)}[\pi^r])\otimes{\chi_p} \cong  H^1_\mathrm{BK}(\Q_p(\mu_p), A_{f_i\chi'\chi_p(-j)}[\pi^r])$ for $i=1,2$. Thus we have,
\begin{equation}\label{bloch-kato-selmer-greenberg-signed1243adjadhjkdskadj}
H^1_\mathrm{BK}(\Q_p(\mu_p), A_{f_1\chi(-j)}[\pi^r]) \cong H^1_\mathrm{BK}(\Q_p(\mu_p), A_{f_2\chi(-j)}[\pi^r]).
\end{equation} 
Now we take invariance by $\Delta$ in  \eqref{bloch-kato-selmer-greenberg-signed1243adjadhjkdskadj}. Then using inflation-restriction sequence and  the fact that order of $\Delta$ is co-prime to $p$, we deduce for quadratic every character $\chi$ of $G_\Q$ and $0\leq j \leq k-2$, a canonical isomorphism
\begin{equation}\label{1243adjadhjkdskadjsmmdk}
H^1_\mathrm{BK}(\Q_p, A_{f_1\chi(-j)}[\pi^r]) \cong H^1_\mathrm{BK}(\Q_p, A_{f_2\chi(-j)}[\pi^r]).
\end{equation} 
induced by $\phi$. This completes the comparison of local $\pi^r$ Bloch-Kato condition at $p$ of $f_1$ and $f_2$, under the condition [$p$-ss]. This completes the proof of the Theorem \ref{mainthmgr2} under the hypotheses (1), (2) and [$p$-ss], as required. \qed

\begin{corollary}\label{dfbwklwfi3fmc}
From the proof of Theorem \ref{mainthmgr2}, we can identify the Bloch-Kato Selmer group with the signed Selmer group for $K \in \{\Q, \Q(\mu_p)\}$. More precisely, let $h \in S_k(\Gamma_0(N),\psi)$ be a non-ordinary at $p \geq k$ with $(p, N)=1$ and $p \geq k$. Then for $K \in \{\Q, \Q(\mu_p)\}$,
$$S_\mathrm{BK}(A_{h (-j)}/K) \cong S_1(A_{h(-j)}/K) \cong S_2(A_{h (-j)}/K)$$
where $S_i(A_{h (-j)}/K)$ is the signed $i$ Selmer group  of $h$ over $K$ and $0 \leq j \leq k-2$. 
\end{corollary}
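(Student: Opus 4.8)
The plan is to observe that all three Selmer groups in the statement are kernels of localisation maps emanating from the \emph{same} global cohomology group $H^1(\Q_\Sigma/K, A_{h(-j)})$, and that they differ only through the chosen local conditions $H^1_\dagger(K_v, A_{h(-j)})$ at the finitely many places $v \in \Sigma_K$ (here $\dagger \in \{\mathrm{BK}, 1, 2\}$, and the twisting character is trivial, as signed conditions are only defined in that case). Consequently it suffices to prove that the three families of local conditions coincide place by place; once they do, the three kernels are literally equal, which is stronger than the asserted isomorphisms.

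At every $v \in \Sigma_K$ with $v \nmid p$, the signed local condition is \emph{defined} in \S\ref{sec2} by $H^1_i(K_v, A_{h(-j)}) := H^1_\mathrm{BK}(K_v, A_{h(-j)})$ for $i = 1, 2$. Hence the Bloch--Kato and the two signed conditions already agree away from $p$, with nothing to prove.

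At the unique place above $p$, the required identity is exactly what was assembled inside the proof of Theorem \ref{mainthmgr2}. For $K = \Q$ the place above $p$ has completion $\Q_p$, and \eqref{akjdfhaahdad} --- which is \cite[Prop.~2.14, Remark~2.15]{lh} --- gives $H^1_\mathrm{BK}(\Q_p, A_{h(-j)}) = H^1_1(\Q_p, A_{h(-j)}) = H^1_2(\Q_p, A_{h(-j)})$ for every $0 \leq j \leq k-2$. For $K = \Q(\mu_p)$ the prime $p$ is totally ramified, so there is again a single place, now with completion $\Q_p(\mu_p)$, and the extension of the previous identity to this field is precisely Proposition \ref{comparison-ss-bk-s1-s2} (equation \eqref{akjdfhaahdadmup}), obtained by feeding two distinct units $u_1, u_2 \in \Z_p^\times$ into \cite[\S3.3]{llz} and comparing the Perrin--Riou dual exponential with the Coleman maps $\mathrm{Col}_{h,1}, \mathrm{Col}_{h,2}$. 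In either case the $p$-local conditions for $\mathrm{BK}$, signed $1$ and signed $2$ coincide.

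Combining the two previous paragraphs, the localisation maps defining $S_\mathrm{BK}(A_{h(-j)}/K)$, $S_1(A_{h(-j)}/K)$ and $S_2(A_{h(-j)}/K)$ have identical source and identical local quotients, hence identical kernels, yielding the claimed identifications. I would only flag that the corollary is phrased for a single eigenform $h$ rather than for the congruent pair $f_1, f_2$ of the main theorem; this is harmless, since both \eqref{akjdfhaahdad} and Proposition \ref{comparison-ss-bk-s1-s2} involve only one form at a time and apply verbatim to $h$. The genuine input --- and the only step that is not a matter of unwinding definitions --- is the passage from $\Q_p$ to $\Q_p(\mu_p)$ supplied by Proposition \ref{comparison-ss-bk-s1-s2}; everything else is the formal principle that matching local conditions force the Selmer groups to agree.
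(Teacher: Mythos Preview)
Your proposal is correct and is essentially the same approach as the paper: the corollary is stated without proof as an immediate consequence of the proof of Theorem~\ref{mainthmgr2}, and what you have written is precisely the unpacking of that claim via the equality of local conditions away from $p$ (by definition) together with \eqref{akjdfhaahdad} for $K=\Q$ and Proposition~\ref{comparison-ss-bk-s1-s2}/\eqref{akjdfhaahdadmup} for $K=\Q(\mu_p)$.
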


\begin{corollary}\label{isogreenbloch-katopir}
Let $h \in S_k(\Gamma_0(N), \epsilon)$ be a $p$-ordinary newform with $(p, N)=1$ and $p>2k-3$. Assume  conditions (1), (2)  and [$p$-general]  of  Theorem \ref{mainthmgr2} holds for $h$.  Then for  $0 \leq j \leq k-2$, and for every quadratic character $\chi$, 
$$S_\mathrm{BK}( A_{h\chi (-j)}[\pi^{r}]/\Q) \cong S_\mathrm{Gr}( A_{h\chi(-j)}[\pi^{r}]/\Q).\qed$$
\end{corollary}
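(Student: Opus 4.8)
The plan is to obtain this statement as a direct specialization of Proposition \ref{blochkatogreenbergcomparison} to the single newform $h$ (in the role of $f_i$, with $\epsilon_i=\epsilon$, $\epsilon_i'=\epsilon'$ and $\lambda_{f_i}=\lambda_h$); indeed the asserted isomorphism is precisely the comparison \eqref{bkgrbkgrbkgr} that is established inside the proof of Theorem \ref{mainthmgr2}, now isolated for one form. Since $(p,N)=1$, the form $h$ is good ordinary at $p$, so the relevant $t$ vanishes and in particular condition (C) of [$p$-general] is vacuous. First I would match the opening three hypotheses of Proposition \ref{blochkatogreenbergcomparison}: its hypothesis (1) is the assumed $p$-ordinarity of $h$, while its hypotheses (2) and (3) are exactly conditions (1) and (2) imported here from Theorem \ref{mainthmgr2}.

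Next I would verify hypothesis (4) of Proposition \ref{blochkatogreenbergcomparison}, namely the $\pi$-divisibility of $H^1_\mathrm{Gr}(\Q_p, A_{h\chi(-j)})$. For this I would invoke Lemma \ref{hgfgrsfdjl,kjf}(ii): the standing assumption $p>2k-3$ together with condition (A) of [$p$-general], that is $a_p(h)\neq\pm\epsilon'(\mathrm{Frob}_p)$ (mod $\pi$), is precisely its hypothesis. As this criterion does not involve $j$, the divisibility holds simultaneously for all $0\le j\le k-2$ and for every quadratic character $\chi$, ramified or unramified at $p$.

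It remains to verify hypothesis (5), that $\lambda_h\neq\pm1$ and $\lambda_h\neq\pm\epsilon'$. For the second inequality I would again use (A): since $\lambda_h(\mathrm{Frob}_p)=\alpha_p(h)\equiv a_p(h)$ (mod $\pi$) (the non-unit root $\beta_p(h)$ being $\equiv0$), condition (A) gives $\lambda_h(\mathrm{Frob}_p)\neq\pm\epsilon'(\mathrm{Frob}_p)$ (mod $\pi$), and because both $\lambda_h$ and $\epsilon'$ are unramified at $p$ and hence determined by their Frobenius values, this forces $\lambda_h\neq\pm\epsilon'$. For $\lambda_h\neq\pm1$ I would appeal to Remark \ref{miyakerem}, which applies since $h$ is good at $p$ ($t=0$); equivalently the Ramanujan bound $|\alpha_p(h)|_\infty=p^{(k-1)/2}>1$ rules out $\alpha_p(h)=\pm1$. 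With all five hypotheses in place, Proposition \ref{blochkatogreenbergcomparison} yields $S_\mathrm{Gr}(A_{h\chi(-j)}[\pi^r]/\Q)\cong S_\mathrm{BK}(A_{h\chi(-j)}[\pi^r]/\Q)$ for every quadratic $\chi$ and every $0\le j\le k-2$, which is the claim.

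The argument is largely a bookkeeping matching of hypotheses, so I do not expect a deep obstacle; the point deserving most care is the passage from the mod-$\pi$ inequalities furnished by [$p$-general] to the integral character inequalities required in hypothesis (5), which is legitimate only because $\lambda_h$ and $\epsilon'$ are unramified at $p$ and therefore pinned down by a single Frobenius value. A secondary subtlety, and the reason the goodness hypothesis $t=0$ cannot be dropped here, is that the half $\lambda_h\neq\pm1$ of (5) does not follow from condition (A) alone when $\epsilon'\neq1$; it is exactly at this juncture that Remark \ref{miyakerem} is needed.
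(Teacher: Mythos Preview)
Your proposal is correct and follows exactly the approach implicit in the paper's \qed: the corollary is nothing more than the isomorphism \eqref{bkgrbkgrbkgr} extracted from the proof of Theorem \ref{mainthmgr2} in the [$p$-general] case, obtained by feeding conditions (1), (2), the divisibility from Lemma \ref{hgfgrsfdjl,kjf}(ii) (via (A) and $p>2k-3$), and the Ramanujan-type input of Remark \ref{miyakerem} into Proposition \ref{blochkatogreenbergcomparison}. Your bookkeeping of hypotheses (1)--(5), including the observation that $t=0$ renders (C) vacuous and that the mod-$\pi$ inequality from (A) forces the character inequality $\lambda_h\neq\pm\epsilon'$, is accurate.
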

\begin{rem}\label{dfbn3hr39pr9r090rjn}
When $f$ is $p$-ordinary, it can be checked that $H^1_\mathrm{BK}(\Q_p, A_{f\chi(-j)}[\pi^r]) = {i^\ast_r}^{-1}\Big(\psi\big(H^1(\Q_p, A'_{f\chi(-j)})_\mathrm{div}\big)\Big),$ where $H^1(\Q_p, A'_{f\chi(-j)})  \stackrel{\psi}{\lra} H^1(\Q_p, A_{f\chi(-j)}) $ is the natural map induced by inclusion and $i^\ast_r: H^1(\Q_p, A_{f\chi(-j)}[\pi^r])$   $\lra H^1(\Q_p, A_{f\chi(-j)})$ is induced by the  Kummer map. 
Further when  $f$  corresponds to  an elliptic curve $E$  over $\Q$, then the  condition $a_p(f) \neq \pm \epsilon'_i(\text{Frob}_p)$ (mod $\pi$) is $a_p(f) \neq \pm 1 $ (mod $p$). In particular, $a_p(f)  \neq 1$ (mod $p$)  is precisely the condition $p \nmid \# \tilde{E}(\mathbb F_p)$. Such a prime is called a  non-anomalous prime (cf. \cite{gr}). \
\end{rem}

\medskip

 We  now extend the notion of Selmer companion forms to cupsforms of two different weights. 

\begin{defn}\label{74892747947928340}
Let $p \nmid N$ and  $f_i \in S_{k_i}(\Gamma_0(Np^{t_i}), \epsilon_i)$ be a  normalized cuspidal eigenform for $i=1,2$.  Then $f_1$ and $f_2$ are $\pi^r$  (Bloch-Kato) Selmer companion if for each critical twist $j$ with $0\leq j \leq \text{min}\{k_1-2, k_2-2\}$ and for every quadratic character $\chi$ of $G_\Q$,  $$S_\mathrm{BK}(A_{f_1\chi(-j)}[\pi^{r}]/\Q) \cong S_\mathrm{BK}( A_{f_2\chi(-j)}[\pi^{r}]/\Q).$$
\end{defn}

\begin{corollary}\label{mainthmgr3}
Let $p$ be odd  and $f_i \in S_{k_i}(\Gamma_0(Np^{t_i}), \epsilon_i)$ be a normalized cuspidal eigenform  with $p \nmid N$, $k_i \geq 2$ and $t_i \in \N \cup \{0\}$, $i=1,2$.  Let  $\phi : A_{f_1}[\pi^r] \lra A_{f_2}[\pi^r]$ be a $G_\Q$ linear isomorphism. We  assume the following:
\begin{enumerate}
\item $N$ is square-free and $\forall \ell \in S$,  $ \text{cond}_\ell(\bar{\rho}_{f_i})=\ell$ for $i=1,2$.
\item The condition $(\mathrm{C'_{i,\ell}})$, defined in equation \eqref{conduction-condition126}, is satisfied for $i=1,2$.
\item \begin{small}{$p>2 \text{max}\{k_1,k_2\}-3$}\end{small}, $f_1$ and $f_2$ are ordinary at $p$. 

\item $a_p(f_i) \neq \pm \epsilon_i'(\text{Frob}_p)$ (mod $\pi$) and  $\omega_p^{k_i-1}\epsilon_{i,p} \neq 1$ (mod $\pi$) for $i=1,2$. 
\item If $k_i=2 $ and $t_i>0$, then in addition assume $\lambda_{f_i} \neq \pm 1.$
\end{enumerate}

Then  for every quadratic character $\chi $ of $G_\Q$ and for every fixed $j$ with $0 \leq j \leq \text{min } \{k_1-2,k_2-2 \}$, we have an isomorphism 
of the $\pi^r$- Bloch-Kato Selmer groups 
$$S_\mathrm{BK}(A_{f_1\chi (-j)}[\pi^{r}]/\Q) \cong S_\mathrm{BK}( A_{f_2\chi (-j)}[\pi^{r}]/\Q). $$

\proof The proof is similar to the proof of  Theorem \ref{mainthmgr2}  when conditions (1), (2), and [$p$-ord] (via [$p$-general]) conditions  in Theorem \ref{mainthmgr2} are satisfied. Hence the proof is omitted.  \qed

\end{corollary}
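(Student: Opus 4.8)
The plan is to follow the [$p$-ord] via [$p$-general] branch of the proof of Theorem \ref{mainthmgr2} essentially verbatim, the only genuinely new point being to check that the auxiliary lemmas — phrased there for two forms of a common weight $k$ — survive the passage to distinct weights $k_1 \neq k_2$. The key structural observation is that the given isomorphism $\phi : A_{f_1}[\pi^r] \to A_{f_2}[\pi^r]$ is $G_\Q$-linear, so tensoring with the character $\chi\omega_p^{-j}$ produces a $G_\Q$-linear isomorphism $A_{f_1\chi(-j)}[\pi^r] \cong A_{f_2\chi(-j)}[\pi^r]$ for every quadratic $\chi$ and every $j$; the weights intervene only through the cyclotomic twists in the local analysis at $p$, and each such appearance is a statement about a single $f_i$ depending only on its own weight $k_i$. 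The restriction $0 \le j \le \min\{k_1-2, k_2-2\}$ guarantees precisely that $j$ is an admissible critical twist for both forms simultaneously.

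First I would dispose of the primes $q \in \Sigma \setminus \{p\}$. By hypotheses (1) and (2), Proposition \ref{conductor-equivalence-at-l} and Corollary \ref{cor3467} apply to each $f_i$ separately, so $A_{f_i\chi(-j)}^{I_q}$ is divisible, and exactly as in the proof of Proposition \ref{blochkatogreenbergcomparison} one obtains the canonical identification $H^1(\Q_q, A_{f_i\chi(-j)}[\pi^r])/H^1_\mathrm{BK}(\Q_q, A_{f_i\chi(-j)}[\pi^r]) \cong H^1(I_q, A_{f_i\chi(-j)}[\pi^r])$ of \eqref{xajhgcbjaha}. Since $\phi$ identifies both the global module and the $I_q$-cohomology, these local quotients match under $\phi$.

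Next, at the prime $p$, two identifications are needed. Because $p > 2\max\{k_1, k_2\} - 3 \ge 2k_i - 3$ and $0 \le j \le \min\{k_1-2, k_2-2\} \le k_i - 2$ for each $i$, Remark \ref{div} and Corollary \ref{78914121881} give that $A''^{I_p}_{f_i\chi(-j)}$ is $\pi$-divisible; together with the condition $a_p(f_i) \neq \pm\epsilon_i'(\text{Frob}_p)$ (mod $\pi$) of hypothesis (4), Lemma \ref{hgfgrsfdjl,kjf}(ii) shows that $H^1_\mathrm{Gr}(\Q_p, A_{f_i\chi(-j)})$ is $\pi$-divisible, so Proposition \ref{blochkatogreenbergcomparison} and Remark \ref{miyakerem} (invoking hypotheses (1), (2), and where needed (5)) yield $S_\mathrm{BK}(A_{f_i\chi(-j)}[\pi^r]/\Q) \cong S_\mathrm{Gr}(A_{f_i\chi(-j)}[\pi^r]/\Q)$ for $i = 1,2$. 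Then I would run the proof of Theorem \ref{mainthmgr} to match the two Greenberg groups; the only input to re-examine is Lemma \ref{phi''copy}, whose proof derives $H_0(I_p, A'_{f_i}[\pi^r]) = 0$ from the fact that $I_p$ acts on $A'_{f_i}[\pi]$ through $\omega_p^{k_i-1}\epsilon_{i,p}$, which the condition $\omega_p^{k_i-1}\epsilon_{i,p} \neq 1$ (mod $\pi$) of hypothesis (4) makes nontrivial for each $i$ with its own weight. The resulting chain $A''_{f_1}[\pi^r] \cong H_0(I_p, A_{f_1}[\pi^r]) \cong H_0(I_p, A_{f_2}[\pi^r]) \cong A''_{f_2}[\pi^r]$ (middle arrow from $\phi$) is insensitive to $k_1 \neq k_2$, and after twisting it identifies the terms $H^1(I_p, A''_{f_i\chi(-j)}[\pi^r])$ occurring in the explicit description of the Greenberg Selmer group in Corollary \ref{6897245}. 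Combining this with the prime-to-$p$ identification and the $\phi$-equivariance of $H^1(G_\Sigma(\Q), -)$ gives $S_\mathrm{Gr}(A_{f_1\chi(-j)}[\pi^r]/\Q) \cong S_\mathrm{Gr}(A_{f_2\chi(-j)}[\pi^r]/\Q)$, and hence the desired isomorphism of Bloch--Kato Selmer groups.

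The only point requiring care is the bookkeeping: confirming that no weight-dependent hypothesis silently assumed $k_1 = k_2$. Each such hypothesis — the inequality $p > 2k_i - 3$, the admissibility $j \le k_i - 2$, the nontriviality $\omega_p^{k_i-1}\epsilon_{i,p} \neq 1$ (mod $\pi$), and the clause $\lambda_{f_i} \neq \pm 1$ — is a per-form condition that hypotheses (3), (4), (5) impose for each $i$ with its own $k_i$, and the bound $p > 2\max\{k_1, k_2\} - 3$ is chosen precisely so that the two weight-$k_i$ inequalities hold at once. Because the [$p$-general] route (unlike [$p$-good]) establishes $\pi$-divisibility of $H^1_\mathrm{Gr}(\Q_p, A_{f_i\chi(-j)})$ for all $j$ in the admissible range, including $j = k_i - 2$, there is no need to invoke the crystalline comparison of \cite{dsw} used in Lemma \ref{combiningwithdswandourresult}, which would otherwise be delicate to apply across forms of different weights; this is what makes the argument go through uniformly and justifies omitting the routine details.
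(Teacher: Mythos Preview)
Your proposal is correct and follows exactly the approach the paper indicates: the paper's own proof simply asserts that the argument is the same as the [$p$-ord] via [$p$-general] branch of Theorem \ref{mainthmgr2} and omits all details, while you have carefully unpacked that branch and verified that every weight-dependent input (Lemma \ref{hgfgrsfdjl,kjf}(ii), Lemma \ref{phi''copy}, Proposition \ref{blochkatogreenbergcomparison}, Corollary \ref{6897245}) is a per-form statement depending only on the individual $k_i$. Your explicit observation that the [$p$-general] route avoids the crystalline comparison of Lemma \ref{combiningwithdswandourresult} --- which would indeed be problematic across different weights --- is a helpful clarification of why this particular branch is the right one to invoke.
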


\section{The cyclotomic case}\label{seccyc}
Let $\Q_\cyc$ be the cyclotomic $\Z_p$ extension of $\Q$.  For a prime $q \in \Q$, let $q_\infty$ be a prime in $\Q_\cyc$ dividing $q$. Let $\Q_{\cyc,q_\infty}$ denote the completion of $\Q_{\cyc}$ at $q_\infty$. Also $I_{q_\infty}$ and $G_{\Q_{\cyc,q_\infty}}$ will respectively denote the inertia and decomposition subgroup of $G_{\Q_\cyc}$ at $q_\infty$. Let $\Sigma_\infty$ be the set of all primes of  $ \Q_\cyc$ lying above the  primes of $\Sigma$. Let $h \in S_k(\Gamma_0(N), \epsilon)$ be a $p$-ordinary. For $\dagger \in \{ \mathrm{Gr},  \mathrm{BK}\}$, we define
$$
S_\dagger(A_{h\chi(-j)}[\pi^r]/\Q_\cyc) =\underset{n}{\varinjlim}S_\dagger(A_{h\chi(-j)}[\pi^r]/\Q(\mu_{p^n})),
$$
where $S_\dagger(A_{h\chi(-j)}[\pi^r]/\Q(\mu_{p^n}))$ was defined in \S \ref{sec2}. We can explicitly write,
\begin{equation}\label{sel-general698979}
S_\mathrm{Gr}(A_{h\chi(-j)}/\Q_\cyc): =\mathrm{Ker}\Big(H^1(\Q_\Sigma/\Q_\cyc, A_{h\chi(-j)}) \lra \underset{q_\infty \in \Sigma_\infty}\prod \frac{H^1(\Q_{\cyc,q_\infty}, A_{h\chi(-j)})}{H_\mathrm{Gr}^1(\Q_{\cyc,q_\infty}, A_{h\chi(-j)})}\Big) 
\end{equation}
$$
\text{with }~H^1_\mathrm{Gr}(\Q_{\cyc,q_\infty}, A_{h\chi(-j)}) : =
\begin{cases} 
\text{Ker}\big(H^1(\Q_{\cyc,q_\infty}, A_{h\chi(-j)}) \lra H^1(I_{q_\infty}, A_{h\chi(-j)})\big) & \text{if } q_\infty \nmid p \\
\text{Ker}\big(H^1(\Q_{\cyc,q_\infty}, A_{h\chi(-j)}) \lra H^1(I_{q_\infty}, A^-_{h\chi(-j)})\big) & \text{if } q_\infty \mid  p.
\end{cases}
$$
Then   $S_\mathrm{Gr}( A_{{f_i}\chi(-j)}[\pi^r]/\Q_\cyc)$ is defined from \eqref{sel-general698979} using \eqref{sel-mod-r-general}. We have the following analogue of Theorem \ref{mainthmgr2}.
\begin{theorem}\label{theoremcyc1}
Let $p$ be an odd prime and for $i=1,2,$ let $f_i$ be a $p$-ordinary normalized cuspidal Hecke eigenform in $S_k(\Gamma_0(N), \epsilon_i)$, where $(N,p) =1$, $k \geq 2$. Let $r \in \N$ and $\phi : A_{f_1}[\pi^r] \lra A_{f_2}[\pi^r]$ be a $G_\Q$ linear isomorphism. We  assume the following:
\begin{enumerate}

\item $N$ is square-free and $\forall \ell \in S$,  $ \text{cond}_\ell(\bar{\rho}_{f_i})=\ell$ for $i=1,2$.
\item The condition $(\mathrm{C'_{i,\ell}})$, defined in equation \eqref{conduction-condition126}, is satisfied for $i=1,2$.

\item $p > k$.

\end{enumerate}
Then  for every quadratic character $\chi $ of $G_\Q$ and for every fixed $j$ with $0 \leq j \leq k-2$, we have an isomorphism 
of the $\pi^r ~\dagger-$Selmer groups for $\dagger \in \{\mathrm{BK}, \mathrm{Gr}\}$,
$$S_\dagger(A_{f_1\chi (-j)}[\pi^{r}]/\Q_\cyc) \cong S_\dagger( A_{f_2\chi (-j)}[\pi^{r}]/\Q_\cyc). $$
\end{theorem}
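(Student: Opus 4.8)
The plan is to mirror the proof of Theorem \ref{mainthmgr2}, reducing the global assertion to a prime-by-prime comparison of local conditions over $\Q_\cyc$ and then transporting everything through $\phi$. Since $\phi$ is $G_\Q$-linear it restricts to a $G_{\Q_\cyc}$-linear isomorphism $A_{f_1\chi(-j)}[\pi^r]\cong A_{f_2\chi(-j)}[\pi^r]$, hence identifies the global cohomology groups $H^1(\Q_\Sigma/\Q_\cyc, A_{f_1\chi(-j)}[\pi^r])\cong H^1(\Q_\Sigma/\Q_\cyc, A_{f_2\chi(-j)}[\pi^r])$; the task is to check that it carries the local conditions of one form onto those of the other. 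I would work with the explicit description \eqref{sel-general698979}, proving $\dagger=\mathrm{Gr}$ first and then deducing $\dagger=\mathrm{BK}$ from a cyclotomic analogue of Proposition \ref{blochkatogreenbergcomparison}. The passage between the direct limit over $\Q(\mu_{p^n})$ and the description over $\Q_\cyc$ is routine: as $|\Delta|=|\Gal(\Q(\mu_{p^\infty})/\Q_\cyc)|$ is prime to $p$, taking $\Delta$-invariants is exact on the $\pi^r$-torsion modules and kills the relevant cohomology of $\Delta$, exactly as at the end of the proof of Theorem \ref{mainthmgr2}.

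At the primes $q_\infty\nmid p$ the analysis is unchanged. Because $\Q_\cyc/\Q$ is unramified away from $p$, the inertia group $I_{q_\infty}$ is identified with $I_q$, so under hypotheses (1) and (2) Corollary \ref{cor3467} still yields that $A^{I_{q_\infty}}_{f_i\chi(-j)}$ is divisible. The verbatim analogue of Lemma \ref{sel5971} then identifies the $\pi^r$-Greenberg condition with $\mathrm{Ker}\big(H^1(\Q_{\cyc,q_\infty}, A_{f_i\chi(-j)}[\pi^r])\to H^1(I_{q_\infty}, A_{f_i\chi(-j)}[\pi^r])\big)$, and, as in Proposition \ref{blochkatogreenbergcomparison}, the Bloch--Kato condition agrees with it; $G_\Q$-linearity of $\phi$ makes these match across $f_1$ and $f_2$.

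The main obstacle is the prime $p$. Here I would first observe that $p>k$ with $t_1=t_2=0$ supplies the needed non-triviality for free: since $\epsilon_{i,p}=1$ and $0<k-1<p-1$, we have $\omega_p^{k-1}\epsilon_{i,p}\equiv\omega_p^{k-1}\not\equiv1\pmod\pi$, which is precisely the hypothesis of Lemma \ref{phi''copy}; hence $\phi|_{G_p}$ induces a $G_p$-linear isomorphism of the unramified quotients $A^-_{f_1\chi(-j)}[\pi^r]\cong A^-_{f_2\chi(-j)}[\pi^r]$ entering the cyclotomic Greenberg condition. The technical heart is a cyclotomic analogue of Proposition \ref{ipandip'}: over the totally ramified local field $\Q_{\cyc,q_\infty}$ one must show the $\pi^r$-Greenberg condition equals $\mathrm{Ker}\big(H^1(\Q_{\cyc,q_\infty}, A_{f_i\chi(-j)}[\pi^r])\to H^1(I_{q_\infty}, A^-_{f_i\chi(-j)}[\pi^r])\big)$, which requires the divisibility of $(A^-_{f_i\chi(-j)})^{I_{q_\infty}}$ and of $H^1_\mathrm{Gr}(\Q_{\cyc,q_\infty}, A_{f_i\chi(-j)})$. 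I expect the clean input making this work over $\Q_\cyc$ to be the vanishing $A_{f_i(-j)}^{G_{\Q_p(\mu_{p^\infty})}}=0$, valid for $p\ge k$ by \cite[Lemma 4.4]{lei} (the same fact used in the non-ordinary case); this removes the delicate $I_p$ versus $I_p'$ case split of Proposition \ref{ipandip'} and, via a Kummer-sequence argument as in Lemma \ref{lemma2.13}, yields the required divisibilities. Combining the isomorphism of unramified quotients with the global identification then matches $S_\mathrm{Gr}(A_{f_1\chi(-j)}[\pi^r]/\Q_\cyc)$ and $S_\mathrm{Gr}(A_{f_2\chi(-j)}[\pi^r]/\Q_\cyc)$.

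For $\dagger=\mathrm{BK}$ I would prove the cyclotomic form of Proposition \ref{blochkatogreenbergcomparison}, namely $H^1_\mathrm{BK}(\Q_{\cyc,q_\infty}, A_{f_i\chi(-j)}[\pi^r])=H^1_\mathrm{Gr}(\Q_{\cyc,q_\infty}, A_{f_i\chi(-j)}[\pi^r])$ at $q_\infty\mid p$: the divisibility of $H^1_\mathrm{Gr}(\Q_{\cyc,q_\infty}, A_{f_i\chi(-j)})$ obtained above lets the Bloch--Kato condition be computed as a maximal divisible subgroup, and the crystalline ($D_\mathrm{crys}$) computation of Proposition \ref{blochkatogreenbergcomparison} carries over. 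Notably, the conditions $\lambda_{f_i}\neq\pm1,\ \neq\pm\epsilon_i'$ of the statement over $\Q$ should be unnecessary here, because over $\Q_\cyc$ the divisibility at $p$ flows directly from $A_{f_i(-j)}^{G_{\Q_p(\mu_{p^\infty})}}=0$ rather than from those root conditions; this is why $p>k$ alone suffices. With $S_\mathrm{BK}(\cdots/\Q_\cyc)\cong S_\mathrm{Gr}(\cdots/\Q_\cyc)$ for each $f_i$ in place, the Bloch--Kato isomorphism follows from the Greenberg one, completing both cases.
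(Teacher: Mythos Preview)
Your overall architecture is right and matches the paper: reduce to local conditions, treat $q_\infty\nmid p$ via Corollary~\ref{cor3467}/Lemma~\ref{sel5971} (unchanged since $\Q_\cyc/\Q$ is unramified outside $p$), invoke Lemma~\ref{phi''copy} at $p$ (which applies because $p>k$ and $t_i=0$ force $\omega_p^{k-1}\epsilon_{i,p}\not\equiv1\pmod\pi$), and then pass from Greenberg to Bloch--Kato. But your two key technical inputs at $p$ differ from the paper's, and one of them is problematic.

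For the equality $H^1_\mathrm{Gr}=H^1_\mathrm{BK}$ over $\Q_{\cyc,q_\infty}$, the paper does \emph{not} rerun Proposition~\ref{blochkatogreenbergcomparison}. It simply quotes \cite[Prop.~4.2.30]{fk}: since $t_1=t_2=0$ and $(N,p)=1$, conditions (i), (ii), (iii)$'$ there hold, giving $H^1_\mathrm{Gr}(\Q_{\cyc,q_\infty},A_{f_i\chi(-j)})\cong H^1_\mathrm{BK}(\Q_{\cyc,q_\infty},A_{f_i\chi(-j)})$ at every $q_\infty\in\Sigma_\infty$. This bypasses entirely the divisibility of $H^1_\mathrm{Gr}$ and the $D_\mathrm{crys}$ computation you propose, and in particular explains cleanly why the conditions $\lambda_{f_i}\neq\pm1,\pm\epsilon_i'$ are not needed over $\Q_\cyc$.

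For the divisibility of $(A''_{f_i\chi(-j)})^{I_{p_\infty}}$, your proposed input---the vanishing $A_{f_i(-j)}^{G_{\Q_p(\mu_{p^\infty})}}=0$ from \cite[Lemma~4.4]{lei}---is the wrong tool. That lemma is stated for non-ordinary forms, and even granting the vanishing for $A$, it does not obviously yield divisibility of the $I_{p_\infty}$-invariants of the quotient $A''$. The paper's argument is far more elementary: on $I_{p_\infty}$ the cyclotomic character $\omega_p$ restricts to the Teichm\"uller character $\bar\omega_p$, so $I_{p_\infty}$ acts on $A''_{f_i\chi(-j)}$ through $\bar\omega_p^{-j}\chi$, a character of order prime to $p$. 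Hence either this character is nontrivial mod~$\pi$ and $(A''_{f_i\chi(-j)})^{I_{p_\infty}}=0$, or it is trivial mod~$\pi$, whence trivial, and $(A''_{f_i\chi(-j)})^{I_{p_\infty}}=A''_{f_i\chi(-j)}$ is divisible. This is precisely what replaces the $p>2k-3$ hypothesis of Theorem~\ref{mainthmgr} and eliminates the $I_p/I_p'$ dichotomy of Proposition~\ref{ipandip'} without any appeal to \cite{lei}. With this in hand, the Greenberg case follows exactly as in Theorem~\ref{mainthmgr}.
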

\proof  First of all, we note that it suffices to show for every $\chi$ and $0 \leq j \leq k-2$, 
\begin{equation}\label{cycgrasISD}
S_\mathrm{Gr}(A_{f_1\chi (-j)}[\pi^{r}]/\Q_\cyc) \cong S_\mathrm{Gr}( A_{f_2\chi (-j)}[\pi^{r}]/\Q_\cyc). 
\end{equation}
 Indeed,  as $t_1=t_2 =0 $ and $(N,p) =1$, the conditions (i), (ii) and (iii)' of \cite[Prop 4.2.30]{fk} is verified. Thus  for $i=1,2$, $$H^1_\mathrm{Gr}(\Q_{\cyc, q_\infty}, A_{f_i\chi (-j)}) \cong H^1_\mathrm{BK}(\Q_{\cyc, q_\infty}, A_{f_i\chi (-j)})$$ by  \cite[Prop 4.2.30]{fk} for $q_\infty \mid p$ as well as $q_\infty \mid q$ with $q \in \Sigma \setminus \{p\}$. Using the definition of $\pi^r$ Selmer group in \eqref{sel-mod-r-general}, we deduce for every $q_\infty \in \Sigma_\infty$, $$H^1_\mathrm{Gr}(\Q_{\cyc, q_\infty}, A_{f_i\chi (-j)}[\pi^r]) \cong H^1_\mathrm{BK}(\Q_{\cyc, q_\infty}, A_{f_i\chi (-j)}[\pi^r]).$$ Hence Theorem \ref{theoremcyc1} will follow once we establish \eqref{cycgrasISD}.  Note that in the proof of Theorem \ref{mainthmgr}, we have assumed $p> 2k-3$ to show that $A''^{I_p}_{{f_i}\chi(-j)}$ is $\pi$-divisible (see Corollary \ref{78914121881} and Remark \ref{div}). In this case of $\Q_\cyc$,  $A''^{I_{p_\infty}}_{{f_i}\chi(-j)}$ is $\pi$-divisible even without the assumption $p > 2k-3$, as we explain now. Note that $I_{p_\infty}$ acts on $A''_{{f_i}\chi(-j)}$  via $\bar{\omega}_p^{-j}\chi$. From the proof of Proposition \ref{ipandip'}, Case 1, we can see that $A''^{I_{p_\infty}}_{{f_i}\chi(-j)} =0$ unless  $j > 0$ and  $(\bar{\omega}_p^{-j}  \chi) _{\mid_{I_{p_\infty}}} =1~ (\text{mod } \pi)  $. Since $\bar{\omega}_p^{-j}  \chi$ has order prime to $p$, in the later case, we get $\bar{\omega}_p^{-j}  \chi =1 $ as a character of $I_{p_\infty}$. Consequently,  $A''^{I_{p_\infty}}_{{f_i}\chi(-j)} = A''_{{f_i}\chi(-j)}$ is $\pi$-divisible. 

Note the assumption $p >k$ is needed in the proof of Lemma \ref{phi''copy}.  Now the proof of \eqref{cycgrasISD} is very similar to the proof of Theorem \ref{mainthmgr} and hence omitted to avoid repetition. \qed

\medskip

 We denote the Teichm\"uller character $G_\Q \lra \mathbb F_p^\times \subset \Z_p^\times$ by  $\bar{\omega}_p$. Note that \eqref{cycgrasISD} is true if and only if $$S_\mathrm{Gr}(A_{f_1\chi{\bar{\omega}_p^{-j}}}[\pi^r] /\Q_\cyc)\cong S_\mathrm{Gr}( A_{f_2\chi {\bar{\omega}_p^{-j}}}[\pi^{r}]/\Q_\cyc).$$

\begin{rem}
\rm{If we assume $A_{f_i}[\pi] $ is an irreducible $G_\Q$  module, then as in Remark \ref{rem1first}, we can deduce 
\begin{equation}\label{913812}
S_\dagger(A_{f_i\chi {\bar{\omega}_p^{-j}}}/\Q_\cyc)[\pi^r] \cong S_\dagger( A_{f_i\chi{\bar {\omega}_p^{-j}}}[\pi^{r}]/\Q_\cyc),
\end{equation} for $\dagger \in \{\mathrm{Gr}, \mathrm{BK}\}$.
Thus using \eqref{913812} together with the hypotheses of Theorem \ref{theoremcyc1},  it follows that  for every quadratic  $\chi$ and every $0 \leq j \leq k-2$, we have an isomorphism 
\begin{equation}\label{dajjshdqudqi}
S_\dagger(A_{f_1\chi {\bar{\omega}_p^{-j}}}/\Q_\cyc)^\vee/{\pi^{r}} \cong S_\dagger( A_{f_2\chi {\bar{\omega}_p^{-j}}}/\Q_\cyc)^\vee/{\pi^{r}}, \quad \dagger \in \{\mathrm{BK}, \mathrm{Gr}\}. 
\end{equation}
 Here for a discrete module $\La: = O[[\Gamma]]\cong O[[T]]$ module $M$, we denote by $M^\vee $ the Pontryagin dual $\mathrm{Hom}_\mathrm{cont}(M, \Q_p/{\Z_p})$. By a deep theorem of Kato, as $f_i$ is $p$-ordinary we know $S_\mathrm{Gr}(A_{f_i\chi {\bar{\omega}_p^{-j}}}/\Q_\cyc)^\vee$ is a finitely generated torsion $\La$ module. Moreover, in this case (cf. \cite[Theorem 4.1.1]{epw}, \cite{gr}) $S_\mathrm{Gr}(A_{f_i\chi {\bar{\omega}_p^{-j}}}/\Q_\cyc)^\vee$ has no pseudonull (finite) $\La$-submodule. It then follows from \eqref{dajjshdqudqi} that for every quadratic character $\chi$ of $G_\Q$ and for  critical values $0 \leq j \leq k-2$,
  \begin{equation}\label{ajdsghqdlast}
 C_\La\Big(S_\mathrm{Gr}(A_{f_1\chi {\bar{\omega}_p^{-j}}}/\Q_\cyc)^\vee\Big) \equiv  C_\La\Big(S_\mathrm{Gr}(A_{f_2\chi {\bar{\omega}_p^{-j}}}/\Q_\cyc)^\vee\Big)  \quad (\text{mod } \pi^r)
 \end{equation}
 Here for a finitely generated torsion $\La$ module module $M$, we denote $C_\La(M)$, the characteristic ideal of $M$ in $\La$. 
  
    Under suitable condition, the $p$-adic $L$-function $ \mathcal L_{f_i\chi{\bar{\omega}^{-j}_p}}^p(T) \in \La$ exists (cf. \cite{epw}) and by  Iwasawa-Greenberg Main Conjecture $$ C_\La\Big(S_\mathrm{Gr}(A_{f_i\chi {\bar {\omega}_p^{-j}}}/\Q_\cyc)^\vee\Big) = (\mathcal L_{f_i\chi{\bar{\omega}^{-j}_p}}^p(T)),$$as ideals in $\La$. In his important work, Vatsal \cite{va} has shown that if $f_1 \equiv f_2 $ (mod $\pi^r$) then for any Dirichlet character  $\chi$ whose conductor is coprime to $N$, 
  \begin{equation}\label{vatsal-padiclfncon}
  \mathcal L_{f_1\chi{\bar{\omega}^{-j}_p}}^p(T) \equiv \mathcal L_{f_2\chi{\bar{\omega}^{-j}_p}}^p(T)  \quad \text{( mod  } \pi^r\La).
\end{equation}

In particular, using Iwasawa main conjecture
together with  \eqref{vatsal-padiclfncon}, one can obtain \eqref{ajdsghqdlast} for any quadratic $\chi$ whose conductor is coprime to $N$. Thus our Theorem \ref{theoremcyc1} can be thought of as an algebraic reflection of the congruence result of Vatsal via Iwasawa main conjecture. However, our Theorem \ref{theoremcyc1} is valid for all possible quadratic character $\chi$.
 }
 \end{rem}

\begin{rem}
\rm{Note that in the non-ordinary case i.e. when $a_p(f_i)$ is not a $p$-adic unit, $S_\mathrm{BK}(A_{f_i\chi (-j)}/\Q_\cyc)^\vee $ is not a  torsion $\La$ module i.e. it has positive $\La$ rank (cf. \cite{gr}).  For   weight $k~ ( >2) $ congruent cuspforms  which are good and non-ordinary at $p$, it is not clear how to  establish Theorem \ref{theoremcyc1} over $\Q_\cyc$ for the $\pi^r$ Bloch-Kato Selmer groups. 
}
\end{rem}

\section{Examples}\label{secexam}
In this section we give several numerical examples to illustrate all our main results.
\begin{example}\label{sjwjakkoi09k3kllm}
\begin{enumerate}
\item \textnormal{We consider the example of elliptic curves $1246B, 1246C$ considered in \cite[Table 1]{cm} and choose the prime  $p=5$. Let $f, g \in S_2(\Gamma_0(1246))$ be the primitive modular forms associated to $1246B$ and  $1246C$ respectively via modularity. We have $1246 = 2\times 7 \times 89$ is square-free and $5\nmid 1246$. Note the Fourier expansions of $f $ and $g$ are given by \cite{lmfdb}
$$ f(q) =  q-q^2+2q^3+q^4+2q^5-2q^6-q^7-q^8+q^9-2q^{10}+2q^{12} +O(q^{13}),$$
$$g(q) = q-q^2-3q^3+q^4-3q^5+3q^6-q^7-q^8+6q^9+3q^{10}-3q^{12}+ O(q^{13})$$
 By computing the minimal discriminant of $1246B$ and  $ 1246C$  and using \cite[Proposition 2.12(c)]{DDT} we can show  that $\forall \ell \in S=\{2, 7, 89\}$,  $ \text{cond}_\ell(\bar{\rho}_f)=\ell=\text{cond}_\ell(\bar{\rho}_g)$. Also using \cite{lmfdb} we get that $\bar{\rho}_f$ and $\bar{\rho}_g$ are irreducible and equivalent. Thus $f$ and $g$ satisfies all the hypotheses of Corollary \ref{cor} and Theorem \ref{mainthmgr2}. Hence $f$ and $g$ are  $5$-Bloch-Kato Selmer companion forms.}

\item \textnormal{Since $a_5(f)=2$ and $a_5(g)=-3$, we see that $5$ is a prime of  ordinary reduction for $f$ and $g$. Using Hida theory,  corresponding to $f$ and  $g$, there exists primitive forms $f_3$ and $g_3$ of weight $=3$, level $5N= 5\times 1246$ and nebentypus $\bar{\omega}_5^{-1}$, where $\bar{\omega}_5$ is the Teichm\"uller character, such that $f_3\equiv f\equiv g \equiv g_3$ (mod $\pi$) (cf. \cite{Wi}). Here $\pi$ is a prime ideal of $K_{f_3,g_3, \bar{\omega}_5}$ lying above $5$. As $k=3$, the condition [$p$-good] of Theorem \ref{mainthmgr2} does not apply although $5$ is a prime of good reduction of $f,g$. However, $a_5(h)  \neq \pm 1$ (mod $5$) for $h \in \{f, g\}$ and  hence also for $h \in \{f_3, g_3\}$. Thus the condition [$p$-general] of  Theorem \ref{mainthmgr2} hold.  We deduce the weight 3 forms $f_3$ and $g_3$ are $\pi$-Bloch-Kato Selmer companion.}

\item \textnormal{Again using Hida theory, by Remark \ref{finintefirst88237}, there are infinitely many cuspidal Hecke newforms $f_r\in S_2(\Gamma_0(1246\times 5^r), \psi_r)$  such that $(f,f_r)$ are $\pi$ Selmer companion.}

\item \textnormal{Using the extended definition of Selmer companion forms of different weights, from Corollary \ref{mainthmgr3}, we have that $f$ and $f_3$ are  $\pi$-Bloch-Kato Selmer companion and same is true for the pair $g$ and $g_3$.} 
\end{enumerate}
\end{example}

\begin{example}
\begin{enumerate}
\item 
\textnormal{ Consider the pair of modular forms $f = 127k4A$ and $g= 127k4C$ of level 127, weight $4$, trivial nebentypus and Galois orbits $A$ and $C$ respectively as appeared in \cite[Table 1]{dsw}. Here $K_f =\Q$ and $[K_g:\Q] = 17$. As the nebentypus is  trivial, in this case $K_{f,g,\epsilon}=K_g$. Then there exists a prime $\p$ of $K_g$ lying above the prime number  $p=43$ such that $f \equiv g $ mod $\p$ \cite[\S  7]{dsw}. The level $127$  is square free (a prime). Using \cite{sage} we have calculated, $a_{43}(f) = 80$ which is coprime to $43$. It follows that $f$ has good and ordinary reduction at $43$. Note that $43> 2k-3= 5$  
Since $f \equiv g $ mod $\p$ and $43 \nmid 127$, the same holds for $g$. Thus the conditions [$p$-ord] and $[p$-good] of Theorem \ref{mainthmgr2} holds. 
 Note that there are no newforms of weight $4$, level $1$ and trivial nebentypus. Then from the level lowering results of modular forms (by  Ribet, Serre 
{\it et.al}), we get that the prime to $p$  conductor of $\bar{\rho}_f=\bar{\rho}_g$ is not $1$. In particular, the condition $(1)$ of Theorem \ref{mainthmgr2} also holds. Also the nebentypus is trivial. Thus all the conditions  Theorem  \ref{mainthmgr2} are satisfied and we deduce $f$ and $g$ are $\mathfrak p$ Bloch-Kato-Selmer companion forms.}

\item \textnormal{Note that $a_{43}(f) \neq \pm 1$ (mod $43$). Using the Hida family passing through $f$ and $g$,  we can generate more examples of higher weight $\mathfrak p$ Selmer companion modular forms as in Example \ref{sjwjakkoi09k3kllm}(2), \ref{sjwjakkoi09k3kllm}(3).}
\end{enumerate}
\end{example}

\begin{example}\label{example159}
\begin{enumerate}
\item
\textnormal{
We take $f =159k4B$ and  $g= 159k4E \in S_4(\Gamma_0(153))$ with   trivial nebentypus.   Note $N=159 = 3 \times 53 $ is square-free.
The Fourier coefficients of $f$ belongs to $\Q$, on the other hand, $K_{f,g}=K_{g}$ with $[K_g:\Q] =16$. We take $p =5$ and using \cite{sage} compute that $a_{5}(f)=0.$ As $p=5 > k=4$, $5 \nmid 159$ and $a_{5}(f)=0$; the condition [$p$-ss] of Theorem \ref{mainthmgr2} is satisfied.
It was shown in  \cite{dsw} that there exists a prime $\p$ of $K_{g}$ lying above $p=5$ such that $f \equiv g$ (mod $\p$). Moreover,
in \cite[\S 7.2, paragraph 3]{dsw} it is given that there is no congruences between  $f$(respectively $g$) at $p$ with a newform of level dividing $N=159$. In particular, using level lowering result of modular forms (by  Ribet, Serre 
{\it et.al}) it follows that the hypothesis (1) on conductor of $\bar{\rho}_f=\bar{\rho}_g$ holds. Thus via Theorem \ref{mainthmgr2}, $f$ and $g$ are $\p$  Bloch-Kato Selmer companion forms.}

\item
\textnormal{
We again take the same forms $f =159k4B$ and  $g= 159k4E \in S_4(\Gamma_0(153))$.  However we now take $p =23 $ and using \cite{sage} compute that $a_{23}(f)=-49.$ 
It was shown in  \cite{dsw} that there exists a prime $\pi$ of $K_{g}$ lying above $p=23$ such that $f \equiv g$ (mod $\p$). As the nebentypus is trivial, and $a_{23}(f)=-49$, we can conclude that [$p$-good] and [$p$-ord] of Theorem \ref{mainthmgr2} are satisfied. As before, by  \cite[\S 7.2, paragraph 3]{dsw} there is no congruences between  $f$(resp $g$) at $p$ with a newform of level dividing $N=159$. In particular, using level lowering result of modular forms condition (1) holds. Thus via Theorem Theorem \ref{mainthmgr2}, $f$ and $g$ are again $\pi$ Bloch-Kato Selmer companion forms.}
\end{enumerate}
\end{example}

\begin{example}  \textnormal{ Next we consider the example of $f = 365k4A $ and $g = 365k4E \in S_4(\Gamma_0(365))$
with $N = 365 = 5 \times 73  $ and we choose $p=29$. Then $f$ has Fourier coefficients defined over $\Q$ and we compute via \cite{sage} $a_{29}(f) =-123$.
Also
$[K_{g}:\Q]=18$ and there exists a prime $\p$ of $K_{g}$ lying above $p=29$ such that $f \equiv g$ mod $\p$ \cite{dsw}.
It is given in \cite[\S 7.2, paragraph 3]{dsw}  there is no congruence between $f$ or $g$ at $\p$ with a newform of level dividing $365$. Thus using the same reasoning as in Example \ref{example159}, we conclude $f$ and $g$ are  $\p$  Bloch-Kato Selmer   companion forms.}
\end{example}

\begin{example} \textnormal{ Consider
$N = 453 = 3 \times 151$ and $f,g \in S_4(\Gamma_0(453))$ where $f =453k4A$ with Fourier coefficients in $\Q$ and $g=453k4E$ with $[K_g:\Q]=23$. For the prime $17$ we compute using \cite{sage} $a_{17}(f) =-66$. Once again from   \cite{dsw}, (i) $\exists$ a prime $\p$ of $K_{g}$ lying above $17$ such that $f \equiv g$ (mod $\p$) and (ii) there is no congruence   between $f$ (resp $g$) at $\p$ with a newform of level dividing $453$. By same reasoning as in Example \ref{example159}, $f$ and $g$ are $\p$ Bloch-Kato   Selmer companion.}
\end{example}

\thanks{{\bf Acknowledgement:} S. Jha acknowledges the support of ECR grant by SERB. D. Majumdar is supported by  IIT Madras NFIG grant
MAT/16-17/839/NFIG/DIPR.  S. Shekhar  acknowledges the support of DST INSPIRE Faculty grant. The authors would like to thank  David Loeffler and Antonio Lei for some discussions.}

\end{document}